\documentclass[twoside,11pt]{article}
\usepackage{subfigure,epsfig,amsfonts}
\usepackage[square]{natbib}
\usepackage[margin=1in]{geometry}
\usepackage{amsmath}
\usepackage{appendix}

\newtheorem{thm}{Theorem}[section]
\newtheorem{prop}[thm]{Proposition}
\newtheorem{lem}[thm]{Lemma}
\let\oldlem\lem
\renewcommand{\lem}{\oldlem\normalfont}

\newtheorem{exmp}{Example}[section]

\newtheorem{assump}[thm]{Assumption}
\newtheorem{proof}[thm]{Proof}

\newtheorem{remark}[thm]{Remark}

%\jmlrheading{1}{2018}{1-48}{4/00}{10/00}{Author}

%\ShortHeadings{Learning with regularized quadratic scoring}{Eric Janofsky}
%\firstpageno{1}

\begin{document}

%%
%% The title of the paper goes here.  Edit to your title.
%%

\title{Learning high-dimensional graphical models with regularized quadratic scoring}

\author{Eric Janofsky \\
	   ejanofsky@uber.com \\
       Uber Technologies, Inc.\\
       1400 Broadway\\
       New York, New York \\
       10018}

\maketitle
\abstract{
Pairwise Markov Random Fields (MRFs) or undirected graphical models are parsimonious representations of joint probability distributions. Variables correspond to nodes of a graph, with edges between nodes corresponding to conditional dependencies. Unfortunately, likelihood-based learning and inference is hampered by the intractability of computing the normalizing constant.
This paper considers an alternative scoring rule to the log-likelihood, which obviates the need to compute the normalizing constant of the distribution. We show that the rule is a positive-definite quadratic function of the natural parameters. We  optimize the sum of this scoring rule and a sparsity-inducing regularizer. For general continuous-valued exponential families, we provide theoretical results on parameter and edge consistency. As a special case we detail a new approach to sparse precision matrix estimation whose theoretical guarantees match that of the graphical lasso of \cite{yuan2007model}, with faster computational performance than the glasso algorithm of \cite{yuan2010high}. We then describe results for model selection in the nonparametric pairwise graphical model using exponential series. The regularized score matching problem is shown to be a convex program; we provide scalable algorithms based on consensus alternating direction method of multipliers (\cite{boyd2011distributed}) and coordinate-wise descent.}
\section{Introduction}

Undirected graphical models are an invaluable class of statistical models. They have been used successfully in fields as diverse as biology, natural language processing, statistical physics and spatial statistics. The key advantage of undirected graphical models is that its joint density may be factored according to the cliques of a graph corresponding to the conditional dependencies of the underlying variables. The go-to approach for statistical estimation is the method of maximum likelihood (MLE). Unfortunately, with few exceptions, MLE is intractable for high-dimensional graphical models, as it requires computation of the normalizing constant of the joint density, which is a $d$-fold convolution. Even exponential family graphical models \citep{wainwright2008graphical}, which are the most popular class of parametric models, are generally non-normalizable, with a notable exception being the Gaussian graphical model. Thus the MLE must be approximated.  State of-the-art methods for graphical structure learning avoid this problem by performing neighborhood selection \citep{yang2012graphical,meinshausen2006high,ravikumar2010high}. However, this approach only works for special types of pairwise graphical models whose conditional distributions form GLMs. Furthermore, these procedures do not by themselves produce parameter estimates.

In this work we demonstrate a new method for graph structure learning and parameter estimation based on minimizing the regularized Hyv\"arinen score of the data. It works for any continuous pairwise exponential family, as long as it follows some weak smoothness and tail conditions. Our method allows for multiple parameters per vertex/edge. We prove high-dimensional model selection and parameter consistency results, which adapt to the underlying sparsity of the natural parameters. As a special case, we derive a new method for estimating sparse precision matrices with very competitive estimation and graph learning performance. We also consider how our method can be used to do model selection for the general nonparametric pairwise model by choosing the sufficient statistics to be basis elements with degree growing with the sample size. We show our method can be expressed as a second-order cone program, and which we provide scalable algorithms based on ADMM and coordinate-wise descent.
\section{Background}

\subsection{Graphical Models}
Suppose $X=(X_1,\ldots,X_d)$ is a random vector with each entry having support $\mathcal{X}_i$, $i=1,\ldots,d$. Let $G=(V,E)$ be an undirected graph on $d$ vertices corresponding to the elements of $X$. An undirected graphical model or Markov random field is the set of distributions which satisfy the Markov property or condition independence with respect to $G$. From the Hammersley-Clifford theorem, if $X$ is Markov with respect to $G$, the density of $X$, $p$ can be decomposed as
\begin{align}
	p(x) \propto \exp\left\{\sum_{c\in \text{cl}(G)} \psi_c(x_c)\right\},
\end{align}
where $\text{cl}(G)$ is the collection of cliques of $G$. The pairwise graphical model supposes the density can be further factored according to the edges of $G$,
\begin{align}
	p(x) \propto \exp\left\{\sum_{i,j\in V,i\leq j}\psi_{ij}(x_i,x_j)\right\}.
\end{align}

%Here $\theta_i=\left(\theta_i^1,\ldots\theta_i^{m}\right)^\top$ and $\theta_{ij}=\left(\theta_{ij}^1,\ldots\theta_i^{m}\right)^\top$ are vectors of natural parameters, and $\phi=\left(\phi_i^1,\ldots\phi_i^{m}\right)^\top$ and $\phi_{ij}=\left(\phi_{ij}^1,\ldots\phi_{ij}^{m}\right)^\top$ are vectors of sufficient statistics. 

For a pairwise exponential family, we parametrize $\psi_i,\psi_{ij}$ by
\begin{align}
	\psi_{ii}(x_i) &:= \sum_{u\leq m} \theta_i^u \phi_i^u(x_i), & i\in V, \\
	\psi_{ij}(x_i,x_j) &:= \sum_{u\leq m} \theta_{ij}^u \phi_{ij}^u(x_i,x_j), & (i,j)\in E.
\end{align}
Here $m$ denotes the maximum number of statistics per edge or vertex. We denote $\theta$ to be the vectorization of the parameters, $\theta:=(\theta_{11}^\top,\ldots,\theta_{d1}^\top,\theta_{22}^\top,\ldots,\theta_{dd}^\top)^\top$.
%Here the $\theta$ is a vector of \emph{natural parameters} and $\phi$ a vector of \emph{sufficient statistics}. To ensure identifiability we require symmetry, $\theta_{ij}^u=\theta_{ji}^u$, and $\phi_{ij}^u=\phi_{ji}^u$. We allow the possibility of multiple statistics for a given vertex or edge. We assume each vertex and edge has no more than $m$ sufficient statistics, and for notational convenience if a group has fewer than $m$ statistics, we pad these vectors with zeros. Denoting the $md\times d$ matrices 
%\begin{align}
%\bar{\theta}=\left[\begin{array}{cc}
%	\theta^1 \\
%	\vdots \\
%	\theta^m
%	\end{array}\right], &&
%	\bar{\phi} = \left[\begin{array}{cc}
%	\phi^1 \\
%	\vdots \\
%	\phi^m
%	\end{array}\right], \label{matform}
%\end{align}
%
%where we use the notation $\theta^1 := [\theta^1_{ij}]$ to denote the symmetric $d\times d$ matrix of parameters. We relate the two forms of notation by \eqref{matform}, by denoting $\theta=\text{vec}(\bar{\theta})$.

%We may alternatively express thedensity as
%\begin{align}
%	p(x) \propto \exp\left\{ \text{trace}\left\{\frac{1}{2} \bar{\theta}^\top\bar{\phi}(x)\right\}\right\}.\label{matrixform}
%\end{align}
\subsection{Scoring Rules}
A \emph{scoring rule} \citep{dawid2005geometry} $S(x,Q)$ is a function which measures the predictive accuracy of a distribution $Q$ on an observation $x$. A scoring rule is \emph{proper} if $\mathbb{E}_p[S(X,Q)]$ is uniquely minimized at $Q=P$. When $Q$ has a density $q$, we equivalently denote the scoring rule $S(X,q)$. A \emph{local} scoring rule only depends on $q$ through its evaluation at the observation $x$. A proper scoring rule induces an entropy
\begin{align}
	H(p) = \mathbb{E}_{p}\left[S(X,p)\right],
\end{align}
as well as a divergence
\begin{align}
	D(p,q) = \mathbb{E}_p \left[ S(X,q) - S(X,p)\right].
\end{align}
An optimal score estimator is an estimator which minimizes the empirical score 
\begin{align}
\frac{1}{n}\sum_{r=1}^n S(X^r,q),
\end{align}
 over some class of densities.
%
%The \emph{optimal (regularized) score estimator} finds the density $q\in\mathcal{P}$ which minimizes the regularized expected score:
%
%\begin{align}
%	\widehat{q} &:= \text{argmin}\left\{\mathbb{E}_p[S(X,q)] +\lambda\mathcal{R}(q) \right\}=\text{argmin}_{q\in\mathcal{P}} \left\{D(p,q) + \lambda\mathcal{R}(q)\right\}
%\end{align}
%
%where $\mathcal{R}$ is a regularizer. Given i.i.d. samples $X^1,\ldots,X^n \sim Q$, we replace the population expectation with the expectation with respect to the empirical distribution:
%
%\begin{align}
%	\text{argmin}\left\{ \frac{1}{n}\sum_{j=1}^n S(X^j,q) + \lambda\mathcal{R}(q)\right\}.
%\end{align}

\begin{exmp}
The \emph{log score} takes the form $l(x,q):=-\log q(x)$. The corresponding entropy is the Shannon entropy $H(p):=-\mathbb{E}_p\left[\log p\right]$, its corresponding divergence is the \emph{Kullback-Leibler Divergence} $\text{KL}(p \mid q) =\mathbb{E}_{X\sim p}\left[\log\frac{p(X)}{q(X)}\right]$ and the optimal score estimator is the maximum likelihood estimator. It is a proper and local scoring rule.
\end{exmp}

\begin{exmp}
Consider the \emph{Bregman score},
\begin{align}
b(x,q):=-g'(q(x))-\intop \rho(dy)\left(g(q(y))-q(y)g'(q(y))\right), \label{bregman}
\end{align}
where $g:\mathbb{R}^+\rightarrow\mathbb{R}$ is a convex, differentiable function and $\rho$ is some baseline measure. The corresponding entropy is $H(p)=-\intop \rho(dy)g(p)$ and divergence 
\begin{align}
D(p,q)=\intop \rho(dy)\left(g(p)-\left(g(q)+g'(q)(p-q)\right)\right).
\end{align}
%When $g(x)=\log(x)$, after removing the constant term, $b$ has the form
%\begin{align}
%	b(x,q)& = -\frac{1}{q(x)} -\intop \rho(dy) \log(q(y)) \\
%	&= -e^{-f(x)} - \intop \rho(dy) f(y),
%\end{align}
%where $f=\log q$. This is a proper scoring rule \citep{dawid2014theory}, but it is not local because it depends on values of $q$ besides the observation $x$. An estimation procedure for nonparametric graphical models using smoothing splines was based on this scoring rule in \citep{jeon2006effective}.
\end{exmp}
\subsection{Hyv{\"a}rinen Score}
Consider densities $q$ which are twice continuously differentiable over $\mathcal{X}=\mathbb{R}^d$ and satisfy
\begin{align}
	%&\mathbb{E}_q\left[\Vert \nabla\log q(X)\Vert^2\right]<\infty, \\
	%&\mathbb{E}_q\left[\Delta\log q(X)\right] <\infty,\\
	& \Vert p(x) \nabla \log q(x)\Vert\rightarrow 0, \text{ for all } \Vert x \Vert \rightarrow \infty. \label{boundary}
\end{align}	
where $X\sim p$. Consider the scoring rule
\begin{align}
	h(x,q) &=\frac{1}{2}\Vert\nabla \log q(x)\Vert^2_2 + \Delta \log q(x),
\end{align}
where $\nabla$ denotes the gradient operator and  $\Delta$ is the operator 
\begin{align}
	\Delta \phi(x) = \sum_{i\in V} \frac{\partial^2\phi(x)}{\partial x_i^2}.
\end{align}
This is a proper and local scoring rule \citep{parry2012proper}. Using integration by parts, it can be shown it induces the \emph{Fisher divergence}:
\begin{align}
	\text{F}(p \mid q) = \mathbb{E}_{X\sim p}\left[ \bigg\Vert \nabla \log \frac{p(X)}{q(X)} \bigg\Vert_2^2 \right].
\end{align}

The optimal score estimator is called the \emph{score matching} estimator \citep{hyvarinen2005estimation,hyvarinen2007some}. The Hyv{\"a}rinen score is homogeneous in $q$ \citep{parry2012proper}, so that it does not depend on the normalizing constant of $q$, which for multivariate exponential families is typically intractable. Second, for natural exponential families the objective of the optimal score estimator is quadratic, so the estimating equations corresponding to score matching are linear in the natural parameters \citep{forbes2014linear}. Maximum likelihood for exponential families generally involves a complex mapping from the sufficient statistics of the data to the natural parameters \citep{wainwright2008graphical,brown1986fundamentals}, necessitating specialized solvers.

\subsection{Score Matching for Exponential Families} \label{smef}
%Define the Jacobian of $\phi$ by
%
%\begin{align}
%	J(x):= \left[\frac{\partial\phi(x)}{\partial x_1},\ldots,\frac{\partial\phi(x)}{\partial x_d}\right],
%\end{align}
%
%
%For an exponential family with sufficient statistics $\phi$, the Hyv\"arinen score is given by
%\begin{align}
%	h(x,\theta) &= \frac{1}{2}\Vert J(x)^\top\theta\Vert^2 + \langle\theta,\Delta\phi(x)\rangle \\
%		&= \frac{1}{2} \theta^\top J(x) J(x)^\top \theta +\theta^\top \Delta\phi(x),
%\end{align}
%
%Since $h$ is proper, $\theta^*$ may be found to be the minimum of $\mathbb{E}_p[h(X,\theta)]$. Define
%\begin{align}
%	\Gamma^* &:= \mathbb{E}_p[J(X) J(X)^\top], \\
%	K^* &:= \mathbb{E}_p[\Delta\phi(X)],
%\end{align}
%After taking derivatives, we get the score equation
%
%\begin{align}
%	\Gamma^*\theta^* +K^* &= 0.
%\end{align}
%
%and assuming $\Gamma^*$ is strictly positive definite, $\theta^*=-(\Gamma^*)^{-1}K^*$. Thus, the natural parameters $\theta^*$ of the exponential family can be represented as the solution of  a linear equation of the statistics $\Gamma^*,K^*$; see also \citep{hyvarinen2007some,forbes2014linear}.

For a pairwise density, define $\phi_{\cdot,i}=\left(\phi_{1i}^\top,\ldots,\phi_{di}^\top\right)^\top$. For $i\in V$, denote $a_i(x) := \frac{\partial}{\partial x_i} \phi_{\cdot,i}$ and
\begin{align}
	(K(x))_{\cdot,i}:= \frac{\partial^2\phi_{\cdot,i}}{\partial x_i^2}.
\end{align}
%and denote $K(x)$ the vectorization: $K(x)=\left(K_{11}^\top,K_{12}^%\top,\ldots,K_{1d}^\top,\ldots,K_{dd}^\top\right)^\top$.
Taking derivatives,
\begin{align}
	\frac{\partial}{\partial x_i} \langle \phi(x), \theta\rangle & =  \left\langle \frac{\partial\phi_{\cdot,i}}{\partial x_i},\theta_{\cdot,i}\right\rangle ,
\end{align}
thus $h$ takes the form
\begin{align}
	h(x,\theta)
	&= \sum_{i\in V} \left( \frac{1}{2}\theta_{\cdot,i}^\top a_i(x)a_i(x)^\top\theta_{\cdot,i} +K_{\cdot,i}(x)^\top\theta_{\cdot,i}\right).
\end{align}
$h$ is a sum of $d$ positive-semidefinite quadratic forms, so it is also psd quadratic. Alternatively, we may write $h(x,\theta)=\theta^\top A(x)\theta + K(x)^\top \theta$, where $A(x)$ is a psd matrix with at most $2md$ non-zero entries per row, and $K(x)$ is a vector with $K_{ij}=\frac{\partial^2 \phi_{ij}}{\partial x_i ^2}+ 1\{i\not = j\}\frac{\partial^2 \phi_{ij}}{\partial x_j ^2}$. If we write $\tilde{\theta}=\left(\theta_{\cdot,1}^\top,\theta_{\cdot,2}^\top,\ldots,\theta_{\cdot,d}^\top\right)^\top$, where $\tilde{\theta}_{ij}=\tilde{\theta}_{ji}$, we may write the scoring rule as
\begin{align}
	h(x,\tilde{\theta}) = \frac{1}{2}\tilde{\theta}^\top \tilde{A}(x)\tilde{\theta} + \tilde{K}(x)^\top\tilde{\theta},\label{pairscore}
\end{align}
where $\tilde{A}(x)$ is a block-diagonal matrix,
\begin{align}
	\tilde{A}(x)= \left(\begin{array}{cccc}
		a_1(x)a_1(x)^\top &&&\\
		& a_2(x)a_2(x)^\top & & \\
		&& \ddots & \\
		&&& a_d(x)a_d(x)^\top
	\end{array}\right)
\end{align}
and $\tilde{K}=(K_{\cdot,1}^\top,\ldots,K_{\cdot,d}^\top)^\top$. We will alternate between these two equivalent representations of $h$ based on convenience.

%Note that even though $\Gamma$ is an $\frac{m^2d(d+1)}{2} \times \frac{m^2d(d+1)}{2}$ matrix, it has only $md$ nonzero entries per row. To clarify, consider the matrix form of the density \eqref{matrixform}. Denote $\bar{J}$ the $md\times d^2$ matrix
%
%\begin{align}
%	\bar{J}(x):= \left[\frac{\partial\bar{\phi}(x)}{\partial x_1},\cdots,\frac{\partial\bar{\phi}(x)}{\partial x_d}\right],
%\end{align}
%
%then for a pairwise density, we may alternatively write
%\begin{align}
%	h(x,\bar{\theta}) = \text{trace}\left\{\frac{1}{2}\bar{\theta}\bar{J}(x)\bar{J}(x)^\top \bar{\theta} + (\Delta\bar{\phi}(x))\bar{\theta}\right\}. \label{pairscore}
%\end{align}
%
%In this form, we denote the $md\times md$ matrices
%
%\begin{align}
%	\bar{\Gamma} := \mathbb{E}_p[\bar{J}(X)\bar{J}(X)^\top], \\
%	\bar{K} := \mathbb{E}_p[\Delta\bar{\phi}(X)].
%\end{align}
%
%For two matrices $A,B$, we have the relation $\text{trace}\left\{ ABA\right\}=\text{vec}(A)^\top(B\otimes I)\text{vec}(A)$. Applying this relation, we may observe that $\Gamma$ and $\bar{\Gamma}$ are related by
%
%\begin{align}
%	\Gamma=\bar{\Gamma}\otimes I_d.
%\end{align}

\begin{remark}[Bounded supports]

From the differentiability assumption we see that our derivations do not generally apply when $\mathcal{X}_i$ is a half-bounded or bounded support as the density may not be differentiable at the boundary. However, in \citep{hyvarinen2007some} a proper scoring rule was derived for half-bounded supports, which may be shown to have the same form as \eqref{pairscore}, after modifying slightly the formulas for $a_i(x),K(x)$. Here we derive a similar formula for densities on $[0,1]^d$.
\begin{prop}\label{boundedscorematch}
Consider random vectors taking values in $[0,1]^d$, with density $q$; suppose $X\sim p$. If $q$ is twice continuously differentiable and satisfies
\begin{align}
	&\Vert p(x) \nabla\log q(x) \otimes x(1-x) \Vert \rightarrow 0, \text{ for all } x\text{ approaching the boundary},
\end{align}
 where $\otimes$ denotes the tensor product $x\otimes y:= (x_1y_1,\ldots,x_dy_d)$, then 
 \begin{align}
 	h(x,q) &:= \frac{1}{2}\Vert \nabla \log q(x) \otimes x(1-x) \Vert_2^2 \notag\\
	&\qquad+ \sum_{i\in V}\left( -2(2x_i-1)x_i(1-x_i)\frac{\partial \log q(x)}{\partial x_i} + x_i(1-x_i)\frac{\partial^2 \log q(x)}{\partial x_i^2}\right),
 \end{align}
is a proper scoring rule. In  particular when $q$ is an exponential family with natural parameters $\theta$ and sufficient statistics $\phi$,
$h(x,\tilde{\theta})= \frac{1}{2} \tilde{\theta}^\top \tilde{A}(x)\tilde{\theta}+K(x)^\top\theta$ is a proper scoring rule, where
\begin{align}
	K(x)_{ij} &= -2(2x_i-1)x_i(1-x_i)\frac{\partial\phi_{ij}}{\partial x_i}+(x_i(1-x_i))^2\frac{\partial^2\phi_{ij}}{\partial x_i^2}, \\
	a_i(x) &= x_i(1-x_i)\frac{\partial \phi_{\cdot,i}}{\partial x_i}.
\end{align}
and $\tilde{A}(x)=\text{diag}(a_i(x)a_i(x)^\top)$.
\end{prop}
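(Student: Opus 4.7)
The plan is to show that for any admissible $q$ one has the identity $\mathbb{E}_p[h(X,q)] = F(p,q) + C(p)$, where
\[
F(p,q):=\tfrac{1}{2}\mathbb{E}_p\bigl[\|(\nabla \log p - \nabla \log q)\otimes x(1-x)\|_2^2\bigr]
\]
is a weighted Fisher-type divergence on $[0,1]^d$ and $C(p)$ does not depend on $q$. Since $F(p,q)\geq 0$ with equality iff $\nabla \log q=\nabla \log p$ holds $p$-almost surely, which forces $q=p$ on the connected cube, this identifies $h$ as a proper scoring rule. This is the natural analogue of Hyv\"arinen's $\mathbb{R}^d$ argument, with the scalar weight $x_i(1-x_i)$ tailored to the boundary of the unit cube.

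First I would expand the square in $F(p,q)$ into three pieces: a pure-$q$ piece equal to $\tfrac12\mathbb{E}_p\|\nabla\log q\otimes x(1-x)\|_2^2$, a pure-$p$ piece absorbed into $C(p)$, and the cross term $-\sum_{i\in V}\mathbb{E}_p[(x_i(1-x_i))^2 (\partial_i\log p)(\partial_i\log q)]$. Using $p\,\partial_i\log p = \partial_i p$, the cross term rewrites as $-\sum_i\int(x_i(1-x_i))^2(\partial_i p)(\partial_i\log q)\,dx$, which I would handle by integration by parts in the $x_i$-direction with the remaining coordinates held fixed. The boundary contribution at $x_i\in\{0,1\}$ carries the factor $(x_i(1-x_i))^2\,p\,\partial_i\log q = x_i(1-x_i)\cdot[\,x_i(1-x_i)\,p\,\partial_i\log q\,]$; by the stated hypothesis the bracket tends to $0$ at the boundary, and the extra $x_i(1-x_i)$ only strengthens the decay, so the boundary term drops.

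The interior term, via the product rule, contributes
\[
\sum_{i\in V}\mathbb{E}_p\bigl[\,2(1-2x_i)x_i(1-x_i)\,\partial_i\log q + (x_i(1-x_i))^2\,\partial_i^2\log q\,\bigr],
\]
and rewriting $1-2x_i=-(2x_i-1)$ matches exactly the linear-in-log-density correction in the definition of $h$ (the stated second-derivative coefficient in the definition of $h$ should read $(x_i(1-x_i))^2$, consistent with the formula for $K(x)$ in the exponential-family case). Combining the three pieces yields $\mathbb{E}_p[h(X,q)] = F(p,q) + C(p)$, and thus properness.

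For the exponential-family statement, I would substitute $\log q(x)=\langle\phi(x),\theta\rangle-\log Z(\theta)$ into $h$: the partition function contributes no $x$-derivatives, so $x_i(1-x_i)\,\partial_i\log q = \theta_{\cdot,i}^\top a_i(x)$. The quadratic piece then collapses to $\tfrac12\sum_i\theta_{\cdot,i}^\top a_i(x)a_i(x)^\top\theta_{\cdot,i}=\tfrac12\tilde\theta^\top\tilde A(x)\tilde\theta$, and the remaining correction is linear in $\theta$ with coefficient $K(x)$ as written. The only real obstacle in the argument is the boundary analysis in the integration by parts; the weighting $x_i(1-x_i)$, which the scoring rule was engineered to carry, is precisely what makes those terms vanish under the stated hypothesis.
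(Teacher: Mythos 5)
Your proof is correct and follows essentially the same route as the paper's: define the weighted Fisher divergence $J(q)=\mathbb{E}_p\left[\Vert(\nabla\log p-\nabla\log q)\otimes x(1-x)\Vert_2^2\right]$, argue it is nonnegative and vanishes only at $q=p$, expand the square, and integrate the cross term by parts in each coordinate so that the boundary contribution vanishes under the stated hypothesis. Your version is in fact slightly more careful than the paper's: you consistently carry the squared weight $(x_i(1-x_i))^2$ through the cross-term computation (where the paper's display slips to the first power), and your remark that the second-derivative coefficient in the displayed definition of $h$ should read $(x_i(1-x_i))^2$, to match the formula for $K(x)$, correctly identifies a typo in the statement.
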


Thus, all of the results in this work may be effortlessly carried over to exponential families over bounded supports.

\end{remark}

\section{Previous Work}
\citep{sriperumbudur2013density} consider using the Hyv\"arinen score for density estimation in a reproducing kernel Hilbert space (RKHS). They consider the optimization for a density $q$,
\begin{align}
	\min_q \left\{\frac{1}{n}\sum_{k=1}^n h(X^k,q) + \frac{\lambda}{2}\Vert q\Vert_{\mathcal{H}}^2\right\},
\end{align}
where $\Vert\cdot\Vert_{\mathcal{H}}^2$ is the norm of the RKHS. After an application of the representer theorem \citep{kimeldorf1971some}, they show this may be expressed as a finite-dimensional quadratic program. They derive rates for convergence to the true density with respect to the Fisher divergence.

\citep{vincent2011connection} shows that the denoising autoencoder may be expressed as a type of score matching estimator, which they call \emph{denoising score matching}. Suppose that $\tilde{X}$ is a version of a sample $X$ which has been corrupted by Gaussian noise, so that its conditional distribution has the score $\partial \log q(\tilde{x}\mid x) = \frac{1}{\sigma^2} (x-\tilde{x})$. Suppose we seek to fit the corrupted data according to a density of the form
\begin{align}
	\log p(\tilde{x}\mid W,b,c) \propto -\frac{1}{\sigma^2}\left(\langle c,\tilde{x}\rangle-\frac{1}{2}\Vert \tilde{x}\Vert_2^2+\text{softplus}\left(\sum_j\langle W_j,\tilde{x}\rangle + b_j\right)\right),
\end{align}
where $\text{softplus}(x)=\max(0,x)$, then minimizing the Fisher divergence between the model density and $q(\tilde{x}\mid x)$ can be shown to be equivalent to minimizing
\begin{align}
	\mathbb{E}_{q(\tilde{x},x)}\left[\Vert W^\top\text{sigmoid}(W\tilde{X}+b)+c-X\Vert^2\right].
\end{align}
This is a simple denoising autoencoder with a single hidden layer, encoder $f(\tilde{x})=\text{sigmoid}(W\tilde{x}+b)$, and decoder $f'(y)=W^\top y+c$.

Score matching has also been used for learning natural image statistics \citep{kingma2010regularized,koster2009estimating}.

\section{Score Matching Estimator}
Define the statistics
\begin{align}
	\widehat{\Gamma} = \frac{1}{n}\sum_{r=1}^n A(X^r), \\
	\widehat{K} = \frac{1}{n}\sum_{r=1}^n K(X^r).
\end{align}
%\begin{align}
%	\widehat{\Gamma}_i := \frac{1}{n}\sum_{r=1}^n a_i(X^r)a_i(X^r)^\top, \\
%	\widehat{K} := \frac{1}{n}\sum_{r=1}^n K(X^r).
%\end{align}

%Let $\widehat{\Gamma}:=\text{diag}(\widehat{\Gamma}_i)$. 
The \emph{regularized score matching estimator} is a solution to the problem
\begin{align}
	\widehat{\theta}\in \underset{\theta}{\text{argmin}} \left\{\frac{1}{2}\theta^\top \widehat{\Gamma}\theta + \widehat{K}^\top \theta+\mathcal{R}(\theta)\right\}. \label{scorematch}
\end{align}

Here $\mathcal{R}$ is the group penalty
\begin{align}
	\mathcal{R}(\theta) = \sum_{i,j\in V} \Vert\theta_{ij}\Vert_2.
\end{align}
This norm induces sparsity in groups (i.e. edges/vertices). In high dimensions, regularizing the vertex parameters is necessary, as \eqref{scorematch} need not exist otherwise. Both the scoring rule and regularizer of \eqref{scorematch} are convex in $\theta$, so it is a convex program. In particular, observe that it can be equivalently represented as
\begin{align}
	&\min_{t,t_{ij}}\left\{ t + \lambda\sum_{ij} t_{ij}\right\} \label{socpform}\\
	s.t. \qquad  &t  \geq \frac{1}{2}\theta^\top\widehat{\Gamma}\theta+\widehat{K}^\top\theta, \notag \\
	&t_{ij} \geq \Vert \theta_{ij}\Vert \notag. 
\end{align}
\eqref{socpform} is a second-order cone program (SOCP) \citep{boyd2004convex}, as the quadratic constraint can be re-written as a conic constraint. If $\widehat{\Gamma}$ is not positive definite, particularly when $n>d$, \eqref{scorematch} may not be unique. This is typical for high dimensional problems. One can impose further assumptions to guarantee uniqueness. For example various assumptions have been described for the lasso (see an overview of these assumptions in \citep{tibshirani2013lasso}) , but we won't go into those details here.

\subsection{Gaussian Score Matching}

Consider the Gaussian density:
\begin{align}
	q(x) \propto \exp\left\{ - \frac{1}{2} x^\top \Omega x \right\},
\end{align}
for $\Omega\succ 0$. We have
\begin{align}
	\nabla \log q (x) &= -\Omega x, \\
	\nabla_i (\nabla_i \log q(x)) &= -\Omega_{ii}.
\end{align}
so the Hyv{\"a}rinen score is given by
\begin{align}
	h(x,\Omega) &= -\sum_i \Omega_{ii}+\frac{1}{2} x^\top \Omega^\top\Omega x \\ 
	&= \text{trace}\left(-\Omega + \frac{1}{2}\Omega^2 x x^\top \right).
\end{align}

%Note that the corresponding entropy is $\mathbb{E}_p[h(X,\Omega)]= -\frac{1}{2}\text{trace}\left( \Omega\right)$.

Let $\widehat{\Sigma}=\frac{1}{n}\sum_{r=1}^n X^r(X^r)^\top$. The  optimal regularized score estimator $\widehat{\Omega}$ is the solution to
\begin{align}
	&\min_{\Omega=\Omega^\top} \left\{ \text{trace}\left(\frac{1}{2}\Omega\widehat{\Sigma}\Omega-\Omega\right) + \lambda\Vert\Omega\Vert_1\right\}.\label{gaussreg}
\end{align}

In the notation of \eqref{scorematch}, we have $\theta=\text{vec}(\Omega)$, $\widehat{K}=\text{vec}(I_d)$ and  $\widehat{\Gamma}_i=\widehat{\Sigma}$ for each $i\in V$.
We do not impose a positive definite constraint on $\Omega$. Doing so would still result in a convex program, indeed it is a semidefinite program, but the resulting computation becomes more complicated and less scalable in practice. However, our theoretical results imply that $\widehat{\Omega}$ is positive definite with high probability. Indeed, denote $\Vert\widehat{\Omega}-\Omega^*\Vert_{sp}$ the spectral norm (maximum absolute value of eigenvalues) of the difference $\widehat{\Omega}-\Omega^*$. Since the spectral norm is dominated by the Frobenius norm (elementwise $L_2$ norm), the consistency result in the sequel implies consistency in spectral norm, and so the eigenvalues of $\widehat{\Omega}$ will be positive with probability approaching one, assuming the population precision matrix $\Omega^*$ has strictly positive eigenvalues. Furthermore, we note that our model selection guarantees still follow whether or not the estimator $\widehat{\Omega}$ is positive definite.

%Suppose the data are scaled to have unit variance, so that $\Sigma$ is the correlation matrix, and fix the diagonal $\Omega_{ii}=1$.  Thus after the $\frac{1}{2}$ term is removed we  get the expression
%\begin{align}
%&\min_\Omega\text{trace}\left((\Omega-\Sigma^{-1})\Sigma\Omega\right) + \lambda\Vert\Omega\Vert_1 \\
%	&= \min_\Omega \text{trace}\left((\Omega-\Sigma^{-1})\Sigma\Omega-I\right) + \lambda\Vert\Omega\Vert_1 \\
%	&= \min_\Omega \text{trace}\left((\Omega-\Sigma^{-1})\Sigma(\Omega-\Sigma^{-1})\right) + \lambda\Vert\Omega\Vert_1. \label{mbproc}
%\end{align}
%
%\citep{yuan2007model} show that when the symmetry constraints on $\Omega$ is relaxed, \eqref{mbproc} estimates precisely the parameters of the parallel LASSO procedure \citep{meinshausen2006high} for model selection in the graphical model. The objective can be split into $d$ LASSO problems which may be solved in parallel. Further, the score is very close to the quadratic approximation to the negative Gaussian log-likelihood, 
%\begin{align}
%\text{trace}\left((\Omega-\Sigma^{-1})\Sigma(\Omega^\top-\Sigma^{-1})\Sigma\right)
%\end{align}
%Note that in the high-dimensional setting no empirical version of $\Sigma^{-1}$ is readily available as the sample covariance matrix $\widehat{\Sigma}$ is not invertible when $d>n$. Thus, these derivations are illustrative and in practice one would use the expression in \eqref{gaussreg}.

\section{Main Results}
We suppose we are given i.i.d. data $X^1,\ldots,X^n \sim p^*$. $p^*$ need not belong to the pairwise exponential family being estimated, in which case we may think of our consistency results as being relative to the population quantity
%\begin{align}
%	\theta^* &:= \text{argmin}_{\theta:\theta_{ij}=\theta_{ji}} \frac{1}{2} \theta^\top \mathbb{E}_{p^*}[A(X)]\theta+\theta^\top\mathbb{E}_{p^*}[K(X)].
%\end{align}
\begin{align}
	\theta^* &:= \left\{\mathbb{E}_{p^*}[A(X)]\right\}^{-1}\mathbb{E}_{p^*}[K(X)] \\
	&= (\Gamma^*)^{-1}K^*. \label{popsol}
\end{align}
Define the maximum column sum of $\theta^*$ by 
\begin{align}
\kappa_{\theta,1}:=\max_{i\in V} \sum_{j\in V,u\leq m} (\theta^*)_{ij}^u,
\end{align}
and define the maximum degree as
\begin{align}
s := \max_{i\in V}\left| \{(i,j):(i,j)\in E\}\right|.
\end{align}
%We suppose $s,\kappa_{\theta,1}$ are bounded as $n$ increases.

\begin{assump} \label{eigenbound}
$\Gamma_i^*=\mathbb{E}_{p^*}[a_i(X)a_i(X)^\top]$ satisfies for each $i\in V$,
\begin{align}
	\infty>\bar{\epsilon}\geq\Lambda_{\max}(\Gamma_i^*)\geq\Lambda_{\min}(\Gamma_i^*) \geq \underline{\epsilon}>0.
\end{align}
Note that this also implies that the eigenvalues of $\Gamma^*$ are bounded as the rows of $\Gamma^*$ are non-trivial linear combinations of those of $\text{diag}(\Gamma_i^*)$, so the inverse in \eqref{popsol} exists and is unique.
\end{assump}
We also suppose $\theta^*$ is sparse, in the following sense:

\begin{assump}
$\theta^*$ belongs to the set
\begin{align}
	\tilde{\mathcal{P}} := \tilde{\mathcal{P}}(E)= \left\{\theta: \Vert\theta_{ij}\Vert_2=0,\text{  for } (i,j)\in E^c\right\}.
\end{align}
\end{assump}
For both parameter consistency and model selection we require the following tail conditions:
\begin{assump} \label{quasrconcentrate}
	For each $i,j,k\in V$ and $u\leq m$ and $t\leq \nu$, for some $c_1,c_2,\nu>0$,
	\begin{align}
		\mathbb{P}\left( \left|\widehat{K}_{ij}^u-(K^*)_{ij}^u\right|\geq t\right) &\leq \exp\left\{ - c_1nt^2 \right\} \\
		\mathbb{P}\left( \left|(\widehat{\Gamma}_{i})_{jk}^u-(\Gamma^*_{i})_{jk}^u\right|\geq t\right) &\leq \exp\left\{ - c_2nt^2 \right\}.
	\end{align}
\end{assump}
\subsection{Parameter Consistency}
We present results in terms of the (vector) $L_2$ norm. Note in particular that this result doesn't require any incoherence condition (though we do require for model selection consistency in the sequel).

For the parameter consistency results in particular, we require the following sub-Gaussian assumption:

\begin{assump}
For each $i\in V$ and $r=1,\ldots,n$, $a_i(X^r)$ is a sub-Gaussian random vector.
\end{assump}

\begin{thm}\label{parconsist}
Suppose the regularization parameter is chosen as
\begin{align}
	\lambda_n\asymp\sqrt{\frac{m\kappa_{1,\theta}^2\log(md)}{n}},
\end{align}
if the sample size satisfies
\begin{align}
	n=\Omega(md),
\end{align}
then any solution to regularized score matching satisfies
\begin{align}
	\Vert\widehat{\theta}-\theta^*\Vert_{2} &=O_p\left(\sqrt{\frac{(d+\left|E\right|)m\kappa_{1,\theta}^2\log(md)}{n}}\right).
\end{align}
\end{thm}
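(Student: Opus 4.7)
The plan is to follow the standard regularized $M$-estimator framework, exploiting decomposability of the group $\ell_{1,2}$ penalty together with strong convexity of the empirical quadratic loss $L(\theta):=\tfrac{1}{2}\theta^\top\widehat{\Gamma}\theta+\widehat{K}^\top\theta$. Setting $\Delta:=\widehat{\theta}-\theta^*$, optimality of $\widehat{\theta}$ combined with convexity of $\mathcal{R}$ gives the basic inequality
\begin{align*}
\tfrac{1}{2}\Delta^\top\widehat{\Gamma}\Delta \;\le\; -\langle \nabla L(\theta^*),\Delta\rangle + \lambda_n\bigl(\mathcal{R}(\theta^*)-\mathcal{R}(\widehat{\theta})\bigr),
\end{align*}
and by the defining equation $\theta^*=(\Gamma^*)^{-1}K^*$ the gradient $\nabla L(\theta^*)$ reduces (up to sign) to the purely stochastic fluctuation $(\widehat{\Gamma}-\Gamma^*)\theta^*+(\widehat{K}-K^*)$.

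The first step is to control this fluctuation in the dual group norm $\|\cdot\|_{\infty,2}$. Exploiting the block-diagonal structure of $\widehat{\Gamma}$ in the $\tilde{\theta}$-representation, the $(i,j)$-group of $(\widehat{\Gamma}-\Gamma^*)\theta^*$ depends only on column $i$ of $\theta^*$, whose $\ell_1$-mass is at most $\kappa_{1,\theta}$. Combining Assumption \ref{quasrconcentrate} entrywise with a union bound over $O(md^2)$ coordinates yields
\begin{align*}
\|\nabla L(\theta^*)\|_{\infty,2}\;=\;O_p\!\left(\kappa_{1,\theta}\sqrt{\tfrac{m\log(md)}{n}}\right).
\end{align*}
Taking $\lambda_n$ to be at least twice this rate and invoking decomposability of the group norm on $E\cup\{(i,i):i\in V\}$ produces, in the standard fashion, both the cone condition $\|\Delta_{E^c}\|_{1,2}\le 3\|\Delta_E\|_{1,2}$ and the upper bound $\tfrac{1}{2}\Delta^\top\widehat{\Gamma}\Delta\le\tfrac{3\lambda_n}{2}\sqrt{d+|E|}\,\|\Delta\|_2$.

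The second step, which I expect to be the main technical hurdle, is a matching lower quadratic bound $\Delta^\top\widehat{\Gamma}\Delta\ge\mu\|\Delta\|_2^2$ for some positive $\mu$. The blocks $\widehat{\Gamma}_i=\tfrac{1}{n}\sum_r a_i(X^r)a_i(X^r)^\top$ are $md\times md$ sample covariances of sub-Gaussian vectors, so standard operator-norm concentration gives $\|\widehat{\Gamma}_i-\Gamma^*_i\|_{\mathrm{op}}\le C\bigl(\sqrt{md/n}+\sqrt{\log(d)/n}\bigr)$ with high probability; under $n=\Omega(md)$ (with the hidden constant chosen in terms of $\underline{\epsilon}$ and the sub-Gaussian norm), a union bound over $i\in V$ yields $\Lambda_{\min}(\widehat{\Gamma}_i)\ge\underline{\epsilon}/2$ uniformly. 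Converting from the $\tilde{\theta}$- to the $\theta$-representation only duplicates coordinates, which can only enlarge the squared $\ell_2$ norm, so $\Delta^\top\widehat{\Gamma}\Delta\ge(\underline{\epsilon}/2)\|\Delta\|_2^2$ on the entire parameter space. Combining with the upper bound and rearranging gives $\|\Delta\|_2=O(\lambda_n\sqrt{d+|E|})=O_p\bigl(\sqrt{(d+|E|)m\kappa_{1,\theta}^2\log(md)/n}\bigr)$, as claimed. The delicate part is the covariance-concentration step: it is where sub-Gaussianity of $a_i(X^r)$ is used and where the sample-size condition $n=\Omega(md)$ becomes binding.
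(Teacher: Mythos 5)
Your proposal is correct and follows essentially the same route as the paper: the paper's Lemma \ref{parconsistlem} is exactly your basic-inequality-plus-decomposability argument (it invokes the Negahban et al.\ decomposability lemma and the subspace compatibility constant $\sqrt{d+|E|}$), obtains the lower quadratic bound $\delta^\top\widehat{\Gamma}\delta\ge(\underline{\epsilon}/2)\|\delta\|^2$ from Vershynin's sub-Gaussian covariance concentration under $n\ge Cmd$ exactly as you do, and the proof of the theorem then verifies $\lambda_n\ge 2\mathcal{R}^*((\widehat{\Gamma}-\Gamma)\theta^*+\widehat{K}-K)$ via Assumption \ref{quasrconcentrate}, the bound $\|(\widehat{\Gamma}-\Gamma)\theta^*\|_{\max}\le 2\kappa_{1,\theta}\|\widehat{\Gamma}-\Gamma\|_{\max}$, and $\mathcal{R}^*(\cdot)\le\sqrt{m}\|\cdot\|_{\max}$, matching your dual-norm step. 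The only cosmetic difference is that the paper closes with a Brouwer-style boundary argument on a ball of radius $7\lambda_n\sqrt{d+|E|}/\underline{\epsilon}$ rather than directly rearranging the quadratic inequality, which yields the same rate.
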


\begin{remark}
	Consider Gaussian score matching. Here $m=1$, so if $\kappa_{1,\theta}$ is bounded we have $\Vert\widehat{\theta}-\theta^*\Vert_2 =O_p\left(\sqrt{\frac{(d+\left|E\right|)\log(d)}{n}}\right)$. This rate is the same as the graphical lasso shown in \citep{rothman2008sparse}. Furthermore, here $\Gamma_i^*=\Sigma$, so our assumption \ref{eigenbound} amounts to bounds on the eigenvalues of $\Sigma$, which are the same as for sparse precision matrix MLE. The assumption that $\kappa_{1,\theta}$ is bounded here says that the  sums of the absolute value of rows of $\Omega^*$ are bounded, which is not necessary for the regularized MLE.
\end{remark}

\begin{remark}
We might reasonably expect $\kappa_{1,\theta}=O(sm)$, in which case $\Vert\widehat{\theta}-\theta^*\Vert_2 = O_p\left(\sqrt{\frac{(d+\left| E\right|)m^3s^2\log(md)}{n}}\right)$. In this setting the regularized MLE will have the rate 
\begin{align}
O_p\left(\sqrt{\frac{(d+\left| E\right|)m \log(md)}{n}}\right).
\end{align}
 (see results in \cite{janofsky2015exponential}, Appendix A).
\end{remark}

\subsection{Model Selection} \label{smms}

For model selection we require several additional conditions. Denote $\widehat{E}$ as the edge set learned from $\widehat{\theta}$:
\begin{align}
	\widehat{E}:=\left\{(i,j): \Vert\widehat{\theta}_{ij}\Vert_2=0\right\}.
\end{align}
Furthermore, define 
\begin{align}
	\kappa_\Gamma := \Vert (\Gamma^*)^{-1}\Vert_\infty, \\
	\kappa_\theta := \Vert\theta^*\Vert_{\max},\\
	\rho^* := \underset{(i,j)\in E}{\min} \Vert\theta_{ij}\Vert_{\max}.
\end{align}

Here $\Vert A\Vert_\infty = \max_j\sum_i \left| A_{ij}\right|$ is the matrix $\infty$ norm and $\Vert\cdot\Vert_{\max}$ the elementwise max norm. We require an \emph{incoherence condition}:
\begin{assump}
\begin{align}
\max_{(i,j)\in E^c} \Vert \Gamma_{ij,E}^*(\Gamma^*_{EE})^{-1}\Vert_2 \leq \frac{1-\tau}{\sqrt{d+E}}, && \text{for some } \tau\in (0,1].
\end{align}
where $\Vert A\Vert_2$ is the matrix operator norm. 
\end{assump}

In the following theorem we suppose $\kappa_\Gamma,\kappa_\theta,s$ are are bounded, while $\rho^*$ may change with the sample size.

\begin{thm}\label{modelselectthm}
	Suppose the regularization parameter $\lambda_n$ is chosen to be
	\begin{align}
		\lambda_n \asymp \sqrt{\frac{m\kappa_{1,\theta}^2 \log(dm)}{n}},
	\end{align}
	then if
	\begin{align}
		n&=\Omega(\max\{m\kappa_{1,\theta}^2\log(dm),m^2s^2\log(dm)\}), \\
		\frac{1}{\rho^*} &= o\left(\sqrt{\frac{\kappa_{1,\theta}^2\log(dm)}{n}}\right),
	\end{align}
	there exists a solution to the regularized score matching estimator $\widehat{\theta}$ with estimated edge set $\widehat{E}$ satisfying
\begin{align}
	\mathbb{P}(\widehat{E}=E)\rightarrow 1.
\end{align}
\end{thm}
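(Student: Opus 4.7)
The plan is a primal--dual witness (PDW) argument of the kind standard for $\ell_1$ and group-$\ell_1$ regularized $M$-estimators. The KKT conditions for the convex program \eqref{scorematch} read $\widehat{\Gamma}\widehat{\theta} + \widehat{K} + \lambda_n\widehat{Z} = 0$, where $\widehat{Z}$ is a subgradient of $\mathcal{R}$: for each group $(i,j)$, $\widehat{Z}_{ij} = \widehat{\theta}_{ij}/\Vert\widehat{\theta}_{ij}\Vert_2$ if $\widehat{\theta}_{ij}\neq 0$ and $\Vert\widehat{Z}_{ij}\Vert_2 \leq 1$ otherwise. I will construct an oracle candidate $\widetilde{\theta}$ supported on the true active set (the $d$ vertex groups together with the $|E|$ edges), and show that with probability tending to one it is in fact an optimum of the unrestricted program and that its sparsity pattern satisfies $\widehat{E}=E$. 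I use ``$E$'' loosely to refer both to the edge set and to the full active index set of $\theta^*$, as is done in the incoherence hypothesis.

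Step 1 (restricted KKT). Let $\widetilde{\theta}$ be the minimizer of the score matching objective restricted to $\{\theta_{E^c}=0\}$. Under Assumption~\ref{eigenbound} and the entrywise concentration of $\widehat{\Gamma}$ from Assumption~\ref{quasrconcentrate}, a matrix perturbation argument shows that $\widehat{\Gamma}_{EE}$ is strictly positive definite with high probability once $n$ dominates $(d+|E|)m\log(md)$, so the restricted minimizer is unique. The restricted KKT condition $\widehat{\Gamma}_{EE}\widetilde{\theta}_E + \widehat{K}_E + \lambda_n\widetilde{Z}_E = 0$, combined with the population identity $\Gamma^*_{EE}\theta^*_E + K^*_E = 0$ that follows from $\theta^*_{E^c}=0$, yields
\[
\widetilde{\theta}_E - \theta^*_E = -\widehat{\Gamma}_{EE}^{-1}\bigl[(\widehat{K}_E - K^*_E) + (\widehat{\Gamma}_{EE} - \Gamma^*_{EE})\theta^*_E + \lambda_n\widetilde{Z}_E\bigr].
\]

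Step 2 (strict dual feasibility). Define $\widetilde{Z}_{ij}$ for $(i,j)\in E^c$ so that the full KKT identity $\widehat{\Gamma}\widetilde{\theta} + \widehat{K} + \lambda_n\widetilde{Z} = 0$ holds (with $\widetilde{\theta}_{E^c}=0$). Substituting the Step~1 identity and the population relation $\Gamma^*_{E^c,E}\theta^*_E + K^*_{E^c} = 0$ produces
\begin{align*}
\widetilde{Z}_{E^c} = \widehat{\Gamma}_{E^c,E}\widehat{\Gamma}_{EE}^{-1}\widetilde{Z}_E &+ \tfrac{1}{\lambda_n}\widehat{\Gamma}_{E^c,E}\widehat{\Gamma}_{EE}^{-1}\bigl[(\widehat{K}_E - K^*_E) + (\widehat{\Gamma}_{EE} - \Gamma^*_{EE})\theta^*_E\bigr] \\
&- \tfrac{1}{\lambda_n}\bigl[(\widehat{K}_{E^c} - K^*_{E^c}) + (\widehat{\Gamma}_{E^c,E} - \Gamma^*_{E^c,E})\theta^*_E\bigr].
\end{align*}
The dominant piece is the first term; approximating $\widehat{\Gamma}_{E^c,E}\widehat{\Gamma}_{EE}^{-1}$ by its population analogue and using $\Vert\widetilde{Z}_E\Vert_2 \leq \sqrt{d+|E|}$ from the group subgradient constraint, the incoherence hypothesis controls it per group by
\[
\Vert\Gamma^*_{ij,E}(\Gamma^*_{EE})^{-1}\widetilde{Z}_E\Vert_2 \leq \Vert\Gamma^*_{ij,E}(\Gamma^*_{EE})^{-1}\Vert_2\,\sqrt{d+|E|} \leq 1-\tau.
\]
The remaining noise terms must each be driven below $\tau/4$ in per-group $\ell_2$-norm using Assumption~\ref{quasrconcentrate}, the scaling $\lambda_n \asymp \sqrt{m\kappa_{1,\theta}^2\log(md)/n}$, and the crude bound $\Vert\theta^*\Vert_2 \leq \kappa_\theta\sqrt{(d+|E|)m}$.

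Step 3 (no missed edges) and main obstacle. Strict dual feasibility forces every optimum of \eqref{scorematch} to have support within $E$, so $\widehat{E}\subseteq E$. For the reverse inclusion, taking the max-norm of the Step~1 identity and using standard bounds on $\Vert\widehat{\Gamma}_{EE}^{-1}\Vert_\infty$ (of order $\kappa_\Gamma$ on the high-probability event) together with Assumption~\ref{quasrconcentrate} gives $\Vert\widetilde{\theta}_E - \theta^*_E\Vert_{\max} = O_p\bigl(\sqrt{m\kappa_{1,\theta}^2\log(md)/n}\bigr)$; the minimum-signal condition $1/\rho^* = o\bigl(\sqrt{\kappa_{1,\theta}^2\log(dm)/n}\bigr)$ then prevents any $\Vert\widetilde{\theta}_{ij}\Vert_{\max}$ from vanishing for $(i,j)\in E$, so $\widehat{E}=E$ with probability tending to one. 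The main technical obstacle is the operator-norm perturbation $\Vert\widehat{\Gamma}_{E^c,E}\widehat{\Gamma}_{EE}^{-1} - \Gamma^*_{E^c,E}(\Gamma^*_{EE})^{-1}\Vert_2$ that appears in Step~2: it is precisely this step that forces the second branch $n=\Omega(m^2s^2\log(dm))$ of the sample-size requirement, by lifting entrywise concentration of $\widehat{\Gamma}$ to a sharp enough operator-norm bound that the $\sqrt{d+|E|}$ margin granted by incoherence is preserved.
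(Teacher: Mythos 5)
Your proposal is the same primal--dual witness skeleton the paper uses (construct an oracle solution supported on the true active set, verify strict dual feasibility on $E^c$ via the incoherence condition and $\Vert\widehat{Z}_E\Vert_2\leq\sqrt{d+|E|}$, then use the minimum-signal condition to rule out missed edges), and it is correct in outline. The one substantive difference is in how the two arguments handle the sample--population gap, and the paper's choice is the cleaner one. In your Step 1 you invert $\widehat{\Gamma}_{EE}$ directly, and in Step 2 you form the sample product $\widehat{\Gamma}_{E^c,E}\widehat{\Gamma}_{EE}^{-1}$ and identify its operator-norm perturbation from $\Gamma^*_{E^c,E}(\Gamma^*_{EE})^{-1}$ as the main technical obstacle. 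The paper never forms either object: Lemma \ref{primaldual1} controls $\Vert\widehat{\theta}_E-\theta^*_E\Vert_\infty$ by a Brouwer fixed-point argument built around the \emph{population} inverse $\Gamma_{EE}^{-1}$, and in Lemma \ref{primaldual2} the stationary condition on $E^c$ is rearranged so that only the population $\Gamma_{ij,E}\Gamma_{EE}^{-1}$ multiplies the residual, with the sampling error isolated as the separate additive term $(\widehat{\Gamma}_{ij,E}-\Gamma_{ij,E})\widehat{\theta}_E$, which needs only entrywise (max-norm) control. Consequently the entire proof runs on the elementwise concentration of Assumption \ref{quasrconcentrate}, and the branch $n=\Omega(m^2s^2\log(dm))$ arises not from an operator-norm perturbation but from the condition $\Vert\widehat{\Gamma}_{EE}-\Gamma_{EE}\Vert_{\max}\leq \tfrac{1}{2ms\kappa_\Gamma}$ needed to close the fixed-point bound (the factor $ms$ being the number of nonzero entries per row of $\Delta_E$). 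Your route can be made to work, but it demands an extra lemma lifting entrywise bounds to an $\ell_2$-operator-norm bound on the perturbed product, which is strictly harder than anything the paper needs; if you rearrange the KKT algebra around the population quantities as above, that obstacle disappears. Two small points to tighten: the paper tracks the error at scale $\lambda_n/\sqrt{m}$ (each dual group satisfies $\Vert\widehat{Z}_{ij}\Vert_2\leq 1$, hence $\Vert\lambda_n\widehat{Z}_{ij}\Vert_\infty\leq\lambda_n/\sqrt{m}$ is not quite right but the per-coordinate scale is what produces $\Vert\widehat{\theta}-\theta^*\Vert_\infty=O(\lambda_n/\sqrt{m})$ and hence the stated $\rho^*$ condition), and the theorem only asserts existence of a sparsistent solution, so you do not need to argue that every optimum shares the support.
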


\begin{remark}
	Assuming $m,s,\kappa_{1,\theta}$ are bounded, this implies the dimension may grow nearly exponentially with the sample size:
\begin{align}
		d=o(e^n),
\end{align}
with the probability of model selection consistency still aproaching one.
\end{remark}
\begin{remark}[Gaussian score matching]

When $m=1$, the sample complexity matches that for structure learning of the precision matrix using the log-det divergence, in \citep{ravikumar2011high}. Thus Gaussian score matching in particular  benefits from identical model selection guarantees as the graphical lasso algorithm. However it should be noted that the assumptions are slightly different. In particular the graphical lasso requires an irrepresentable condition on $\Sigma\otimes\Sigma$, while our method involves an irrepresentable condition for $\Sigma\otimes I_d$.
\end{remark}
\subsection{Model Selection for the Nonparametric Pairwise Model}

In this section we consider model selection for the nonparametric pairwise model. We suppose the log of the true density $p^*$ belongs to $W_2^r$, the Sobolev space of order $r$. This implies, along with the pairwise assumption, that $\log p^*$ has the infinite expansion
\begin{align}
	\log p^* &\propto \exp\left\{ \sum_{i,j\in V,i\leq j}\sum_{k,l=1}^\infty (\theta^*)_{ij}^{kl}\phi_{kl}(x_i,x_j)+\sum_{i\in V}\sum_{k=1}^\infty (\theta^*)_{i}^k\phi_k(x_i)\right\}, \label{infexpans}
\end{align}
where here $\{\phi_k,\phi_{kl}\}$ is a basis over $[0,1]^2$. For an expansion in  $W_2^r$, we have that the coefficients decay at the following rates:
\begin{align}
	&\sum_k \theta_i^k k^{2r} <\infty, & \text{ for all } i\in V,\\
	&\sum_{k,l} \theta_{ij}^{kl} k^{2r_i}l^{2r_j}<\infty, & \text{ for all } (i,j)\in E,& \qquad r_i+r_j=r.
\end{align}

 For our results we assume $\{\phi_k\}$ is the orthonormal Legendre basis on $[0,1]$, and $\{\phi_{kl}\}$ is the tensor product basis $\phi_{kl}(x_i,x_j)=\phi_k(x_i)\cdot \phi_l(x_j)$. This is because the supporting lemmas are particular to the Legendre basis,  but in practice one is not limited to a particular basis. Now, consider forming a density by truncating \eqref{infexpans} after $m_1$ terms for the univariate expansions, and $m_2$ for bivariate:
\begin{align}
	\log p_{\theta} &\propto \exp\left\{ \sum_{i,j \in V,i\leq j}\sum_{k,l=1}^{m_2} \theta_{ij}^{kl}\phi_{kl}(x_i,x_j)+\sum_{i\in V}\sum_{k=1}^{m_1}\theta_{i}^k\phi_k(x_i)\right\}.
\end{align}

Observe that this is a finite-dimensional exponential family. Furthermore, the normalizing constant for this family will generally be intractable, requiring a $d$-fold integral. We choose our density estimate to be $p_{\widehat{\theta}}$, where $\widehat{\theta}$ is a solution to the score matching estimator \eqref{scorematch} for this family. Furthermore, we let the number of sufficient statistics $m_1,m_2$ grow with the sample size $n$ to balance the bias from truncation with the estimation error. We denote $E$ to be the support of $p^*$:
\begin{align}
	E:= \left\{ (i,j): \Vert\theta_{ij}^*\Vert=0\right\},
\end{align}

Now, decompose the vector $\theta^*$ into the included terms and truncated terms, $\theta^*=((\bar{\theta}^*)^\top,(\theta^*_T)^\top)^\top$ and corresponding sufficient statistics $\phi=((\bar{\phi})^\top,(\phi_T)^\top)$. Denote $(a_T)_i(x) := \frac{\partial}{\partial x_i} (\phi_T){\cdot,i}$, $A_T(x)= \text{diag}((a_T)_i(x)(a_T)_i(x)^\top)$, and $\Gamma_T^* =\mathbb{E}_p [A_T(X)]$. Applying the results in Section \ref{smef}, we have the linear relation
\begin{align}
	K^* = -\Gamma^*_T\theta^*_T - \Gamma^* \bar{\theta}^*.
\end{align}

In the following theorem we assume $\kappa_T := \Vert\Gamma^*_T\Vert_{\max}$ is bounded, and $\kappa_{1,\theta}=O(m_2^2)$, in addition to the assumptions for the parametric setting stated in Section \ref{smms}, with the exception of Assumption \ref{quasrconcentrate}. Since the number of statistics grows to infinity in the nonparametric case, we need more accurate accounting of the constant terms in the concentration inequality. In lieu of the concentration assumption, we have the following assumption on the boundedness of the marginals of $p$.

\begin{assump} \label{assump4}
		 For each $i,j\in V$,
	\begin{align}
		\underline{\epsilon} & \leq p_{ij}(x_i,x_j) \leq \bar{\epsilon},
		\end{align}
		for absolute constants $\underline{\epsilon}>0,\bar{\epsilon}<\infty$.
\end{assump}

This assumption is mild for density estimation as it only requires bounds on the bivariate marginals rather than the full distribution.

\begin{thm}\label{msnonp}
	Suppose that the truncation parameters and regularization parameter are chosen to be
	\begin{align}
		m_2\asymp n^{\frac{1}{2r+13}} \\
		m_1\asymp n^{\frac{1}{2r+13}} \\
		\lambda_n\asymp\sqrt{\frac{\log nd}{n^{\frac{2r-1}{2r+13}}}} \
	\end{align}
	and the dimension $d$ and $\rho^*$ satisfy
	\begin{align}
		d= o\left(e^{n^{\frac{2r-1}{2r+13}}}\right) \\
		\frac{1}{\rho^*} = o\left(\sqrt{\frac{\log nd}{n^{\frac{2r+1}{2r+13}}}}\right),
	\end{align}
	then there exists a solution $\widehat{\theta}$ such that the edge set $\widehat{E}$ satisfies
	\begin{align}
		\mathbb{P}(\widehat{E}=E)\rightarrow 1.
	\end{align}
\end{thm}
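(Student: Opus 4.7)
The plan is to follow the primal-dual witness (PDW) construction used to prove Theorem \ref{modelselectthm}, modified in two ways to handle the nonparametric setting. First, because truncation at $m_2$ drops the infinite tail $\theta^*_T$, the stationarity conditions of the truncated program pick up an extra deterministic bias term $\Gamma^*_{E,T}\theta^*_T$ on top of the usual stochastic error; this bias must be absorbed into $\lambda_n$. Second, Assumption \ref{quasrconcentrate} is no longer assumed, so concentration of $\widehat{\Gamma}$ and $\widehat{K}$ must be rederived from Assumption \ref{assump4} with explicit $m_2$-dependence. The model selection proof then decomposes cleanly into a concentration step, a truncation-bias step, and a rate-balancing step.

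Concretely, I would write the KKT conditions for the regularized truncated score matching program and decompose the $E$-block of the stationarity condition as
\begin{align*}
\widehat{\Gamma}_{EE}\,\bar{\theta}_E + \widehat{K}_E + \lambda_n \hat{z}_E = -\widehat{\Gamma}_{E,T}\theta^*_T,
\end{align*}
then invert $\widehat{\Gamma}_{EE}$ to express $\widehat{\theta}_E-\bar{\theta}^*_E$ as a sum of (a) the stochastic fluctuations of $\widehat{\Gamma}$ and $\widehat{K}$ about $\Gamma^*$ and $K^*$, (b) the deterministic remainder $\Gamma^*_{E,T}\theta^*_T$ plus the mean-zero piece $(\widehat{\Gamma}_{E,T}-\Gamma^*_{E,T})\theta^*_T$, and (c) the subgradient term $\lambda_n\hat{z}_E$. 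Strict dual feasibility on $E^c$ is then verified under the incoherence assumption exactly as in the proof of Theorem \ref{modelselectthm}, after controlling each of these three contributions in the appropriate norm.

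The main technical work lies in controlling the stochastic error. Using the fact that on $[0,1]$ the Legendre polynomials $\phi_k$ and their first two derivatives satisfy $\|\phi_k^{(j)}\|_\infty = O(k^{j+1/2})$, combined with the boundary damping factor $x_i(1-x_i)$ from Proposition \ref{boundedscorematch}, every entry of $A(X)$ and $K(X)$ is a uniformly bounded random variable whose sup-norm grows as a fixed polynomial in $m_2$. Under Assumption \ref{assump4} the bivariate marginals are bounded, so Hoeffding plus a union bound over the $O(d^2m_2^4)$ coordinates yields
\begin{align*}
\max_{i,j,k,u}\bigl|(\widehat{\Gamma}_i)_{jk}^u - (\Gamma^*_i)_{jk}^u\bigr| = O_p\!\Bigl(\mathrm{poly}(m_2)\sqrt{\tfrac{\log(nd)}{n}}\Bigr),
\end{align*}
and analogously for $\widehat{K}-K^*$; the precise polynomial power of $m_2$ that emerges is exactly what produces the constant $13$ in $n^{1/(2r+13)}$. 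For the truncation bias, the Sobolev decay of $\theta^*$ together with boundedness of $\kappa_T$ gives $\|\Gamma^*_T\theta^*_T\|_\infty$ decreasing polynomially in $m_2^{-1}$ at a rate controlled by $r$. Balancing this bias against the estimation noise by optimizing over $m_2$ yields the prescribed $m_2 \asymp n^{1/(2r+13)}$ and $\lambda_n\asymp\sqrt{\log(nd)/n^{(2r-1)/(2r+13)}}$, and the minimum-signal condition $1/\rho^* = o(\lambda_n)$ then guarantees correct support recovery via the usual thresholding argument.

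The main obstacle is the careful bookkeeping of the $m_2$-dependent constants: (i) the interaction between the polynomial growth of the Legendre derivatives and the $x(1-x)$ damping when one computes $a_i$ and $K$ explicitly, (ii) the $\ell_\infty$-to-$\ell_2$ conversions needed to make the incoherence argument work when each edge block has size $m_2^2$, and (iii) checking that the incoherence-based dual feasibility continues to hold when both the number of sufficient statistics and the ambient dimension grow with $n$. Once these are handled, the rate optimization and the final check that $d = o(\exp(n^{(2r-1)/(2r+13)}))$ is consistent with the concentration rate are routine.
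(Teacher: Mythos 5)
Your proposal follows essentially the same route as the paper's proof: the primal--dual witness argument of Theorem \ref{modelselectthm}, augmented with the truncation-bias term $\Gamma^*_T\theta^*_T$ absorbed into $\lambda_n$, Hoeffding-plus-union-bound concentration for $\widehat{\Gamma}$ and $\widehat{K}$ under Assumption \ref{assump4} with explicit $m_2$-dependence via bounds on damped Legendre derivatives, and a final bias--variance balance yielding $m_2\asymp n^{1/(2r+13)}$. The only caveat is that you leave the decisive exponents as ``$\mathrm{poly}(m_2)$'' (and your stated undamped bound $\Vert\phi_k^{(j)}\Vert_\infty=O(k^{j+1/2})$ is not correct as written --- it is the \emph{damped} quantities $|x(1-x)\phi_k'|=O(k^{3/2})$ and $|x(1-x)\phi_k''|=O(k^{5/2})$, obtained in the paper via Bonnet's recursion and Legendre's differential equation, that hold), whereas pinning down $\Vert A(x)\Vert_{\max}=O(m_2^4)$ and hence the constant $13$ is precisely the content of the paper's Lemma \ref{bonnetlem}.
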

\begin{remark}
	If $r=2$, and $s$ grows as a constant, we may have
	\begin{align}
		d=o\left(e^{n^{3/17}}\right),
	\end{align}
	and still ensure model selection consistency. In \cite{janofsky2015exponential} it was shown that the sample complexity for model selection in the nonparametric pairwise model the regularized exponential series MLE using Legendre polynomials is $d=o\left(e^{n^{3/7}}\right)$, though this estimator can't be computed exactly. The optimal choice of regularization and truncation parameters is much different for these two methods. This is a consequence of different estimation errors. In our supporting lemmas we require convergence of the statistic $\widehat{\Gamma}$ to its expectation. In the appendix we show that applying Hoeffding's inequality and a union bound,
\begin{align}
	\kappa_{1,\theta}\Vert\widehat{\Gamma}-\Gamma\Vert_{\max} = O_p\left(\sqrt{\frac{m_2^{12} \log nd}{n}}\right).
\end{align}

For the regularized MLE, we needed convergence of the sufficient statistics $\widehat{\mu}$, which converges at a much faster rate of $O_p\left(\sqrt{\frac{m_2^4 \log nd}{n}}\right)$. Our results agree with intuition, that the score matching statistics, derived from the derivatives of the log-density, should be harder to estimate than the sufficient statistics.

Also, it should be noted that the assumptions underlying the two results are different. The MLE involves conditions on the covariance of the sufficient statistics $\text{cov}_{p^*}[\phi(X)]$, while the score matching estimator requires conditions on $\Gamma^*=\mathbb{E}_{p^*}[A(X)]$. An interesting stream of future work would be to better understand the relationship between these two approaches and their assumptions.
\end{remark}
\section{Algorithms}
In this section we consider algorithms for solving \eqref{scorematch}. In our experiments we denote our method QUASR, for \textbf{Qua}dratic \textbf{S}coring and \textbf{R}egularization. There are variety of generic approaches to solving problems which may be cast as the sum of a smooth convex function plus a sparsity-inducing norm  \citep{bach2011convex}, as well as generic solvers for solving second-order cone programs. Here we will propose two novel algorithms which exploit the unique structure of the problem at hand. First we will consider an ADMM algorithm; for a detailed exposition of this approach, see \citep{boyd2011distributed}. In section \ref{cwd}, we consider a coordinate-wise descent algorithm for Gaussian score matching \citep{friedman2007pathwise}.
\subsection{Consensus ADMM} \label{cadmm}

The idea behind ADMM is that the problem \eqref{scorematch} can be equivalently written as
\begin{align}
	\underset{\theta,z}{\text{min}} \left\{\frac{1}{2}\sum_{i\in V} \left(\theta_{\cdot,i}^\top \widehat{\Gamma}_i \theta_{\cdot,i} +\theta_{\cdot,i}^\top\widehat{K}_{\cdot,i}\right)+ \lambda\sum_{i,j\in V,i\leq j}\Vert z_{ij}\Vert_2\right\},
\end{align}
subject to the constraint that $\theta_{ij}=\theta_{ji}=z_{ij}$. The scaled augmented Lagrangian for this problem is given by
\begin{align}
	L\left(\theta,y,z\right) &= \frac{1}{2}\sum_{i\in V}\left( \theta_{\cdot,i}^\top \widehat{\Gamma}_i \theta_{\cdot,i} +\theta_{\cdot,i}^\top\widehat{K}_{\cdot,i}\right) \\
	&\quad+\sum_{i,j\in V: i\leq j}\bigg( \Vert z_{ij}\Vert_2+y_{ij}^\top(\theta_{ij}-z_{ij})+y_{ji}^\top(\theta_{ji}-z_{ij}) \\
	&\qquad\qquad\qquad+\frac{\rho}{2}\Vert \theta_{ij}-z_{ij}\Vert^2+\Vert \theta_{ji}-z_{ij}\Vert^2\bigg),
\end{align}
here $\{y\}:=\left\{y_{ij},y_{ji}\right\}$ are dual variables, and $\rho$ is a penalty parameter which we choose to be 1 for simplicity. The idea behind ADMM is to iteratively optimize $L$ over the $\theta,y,z$ variables in turn. In the first step, since $\theta_{ij}=\theta_{ji}$ is included as a constraint and may be considered separately, $L$ as a function of $\theta$ decouples into $d$ independent quadratic programs, one for each "column" of $\theta$, which may be solved in parallel. In the second step, $z_{ij}$ pools the estimates $\theta_{ij}$ and $\theta_{ji}$ from the previous step, and applies a group shrinkage operator. The third step is a simple update of the dual variables.

\begin{figure}
\begin{center}
\framebox{\begin{minipage}[t]{0.95\columnwidth}
\begin{enumerate}
	\item Initialize $\theta^{(0)}$, $z^{(0)}$, $y^{(0)}$, and choose $\rho=1$;
	\item For $t=1,\ldots,$ until convergence:
		\begin{enumerate}
			\item Update $\theta$ for $i\in V$: 
			\begin{align}
	\theta_{\cdot,i}^{(t)} = \left(\widehat{\Gamma}_i+\rho I_d\right)^{-1}\left(-\widehat{K}_{\cdot,i}-y_{\cdot,i}^{(t-1)}+\rho z_{\cdot,i}^{(t-1)}\right),
\end{align}
	
			\item Update $z$ for $i,j\in V$, $i\leq j$:
			\begin{align}
				z_{ij}^{(t)} = \tilde{S}\left(\frac{1}{2}\left(\theta_{ij}^{(t)}+\theta_{ji}^{(t)}+y_{ij}^{(t-1)}/\rho+y_{ji}^{(t-1)}/\rho\right),\lambda/\rho\right),
			\end{align}
			where $\tilde{S}(x,\lambda):= \left(1-\frac{\lambda}{\Vert x\Vert_2}\right)_+ x$.
			\item Update $y$ for $i,j\in V$, $i\leq j$:
			\begin{align}
				y_{ij}^{(t)} = y_{ij}^{(t-1)} + \rho\left(x_{ij}^{(t)}-z_{ij}^{(t)}\right),\\
				y_{ji}^{(t)} = y_{ji}^{(t-1)} + \rho\left(x_{ji}^{(t)}-z_{ij}^{(t)}\right).
			\end{align}
		\end{enumerate}		
\end{enumerate}
\end{minipage}}
\caption{QUASR Consensus ADMM}
\end{center}
\end{figure}
Due to parallel updating of $\theta$ in step (a) and subsequent averaging in step (b), this is known as \emph{consensus ADMM}. At convergence, the constraints $\theta_{ij}=\theta_{ji}=z_{ij}$ are binding. In practice, we stop when the average change in parameters is small:
\begin{align}
	\sum_{i,j\in V} \Vert\theta_{ij}^{(t)}-\theta_{ij}^{(t-1)}\Vert_1\big/\sum_{i,j\in V} \Vert\theta_{ij}^{(t)}\Vert_1 < 10^{-4}.
\end{align}

In addition to parallelizing the update (a), other speedups are possible. For example, we may compute the eigenvalues $\Lambda$ and eigenvectors $Q$ of $\widehat{\Gamma}_i$, which may be computed directly from the data matrix $\left[a_i(X^1),\ldots a_i(X^n)\right]^\top$ using the singular value decomposition ($Q$ being the right singular vectors, and $\sqrt{n}\Lambda$ being the squared singular values of the data matrix). We may then cache the matrix $Q(\text{diag}(\Lambda+\rho)^{-1})Q^\top$, which is equivalent to $(\widehat{\Gamma}_i+\rho I_d)^{-1}$ up to numerical error. This can be computed for each $i\in V$, also in parallel, and only needs to be computed once (even if estimating over a sequence of $\lambda s$). When optimizing over a path of truncation parameters $m_1,m_2$, one may utilize block matrix inversion formulas and the Woodbury matrix identity to avoid computing the inverse from scratch each time. In particular, let $\widehat{\Gamma}_i$ be the current matrix of statistics, and $\widehat{\Gamma}_i^{new}$ be the statistic with a higher degree of basis expansion. Then $\widehat{\Gamma}_i^{new}$ has the form for some $\widehat{b},\widehat{C}$,
\begin{align}
	\widehat{\Gamma}_i^{new} = \left(\begin{array}{cc}
		\widehat{\Gamma}_i & \widehat{b} \\
		\widehat{b}^\top & \widehat{C}
	\end{array}
	 \right).
\end{align}
The inverse takes the form
\begin{align}
	(\widehat{\Gamma}_i^{new}+\rho I)^{-1} = \left(\begin{array}{cc}
		L &  \\
		-\left(\widehat{C}+\rho I\right)^{-1}\widehat{b}^\top L & \left(\widehat{C}+\rho I -\widehat{b}^\top \left(\widehat{\Gamma}_i+\rho I\right)^{-1}\widehat{b}\right)^{-1}
	\end{array}
	 \right),
\end{align}
where
\begin{align}
	L&:=\left(\widehat{\Gamma}_i+\rho I - \widehat{b}^\top \left(\widehat{C}+\rho I\right)^{-1} \widehat{b}\right)^{-1} \\
	&= \left(\widehat{\Gamma}_i+\rho I\right)^{-1} - \left(\widehat{\Gamma}_i+\rho I\right)^{-1}\widehat{b}^\top\left(\widehat{C}+\rho I -\widehat{b}^\top \left(\widehat{\Gamma}_i+\rho I\right)^{-1}\widehat{b}\right)^{-1}\widehat{b}\left(\widehat{\Gamma}_i+\rho I\right)^{-1}.
\end{align}

If the dimension of $\widehat{C}$ is small relative to that of $\widehat{\Gamma}_i$, $\left(\widehat{\Gamma}_i^{new}+\rho I\right)^{-1}$ can be computed quickly using the cached $\left(\widehat{\Gamma}_i + \rho I\right)^{-1}$, without the need for any additional large matrix inversions.
% Thus, each iterate only requires simple operations, including $d$ parallel matrix solves and $d(d+1)/2$ parallel group thresholding operations. Also, when estimating over a sequence of $\lambda$s, we may employ warm starts, where the algorithm is initialized with the output $\theta,z,y$ from the previous run in the sequence.

\subsection{Coordinate-wise Descent} \label{cwd}
In this section we consider a coordinate-wise descent algorithm for the Gaussian score matching problem \eqref{gaussreg}. Coordinate-wise descent algorithms are known to be state-of-the-art for many statistical problems such as the lasso and group lasso \citep{friedman2007pathwise} and glasso for sparse Gaussian MLE \citep{friedman2008sparse}. Regularized score matching in the Gaussian case admits a particularly simple coordinate update.
Consider the stationary condition for $\Omega$ in \eqref{gaussreg}: 
\begin{align}
	\frac{1}{2}\left(\Omega\widehat{\Sigma}+\widehat{\Sigma}\Omega\right)-I_d + \widehat{Z}=0,
\end{align}
where $\widehat{Z}$ is an element of the subdifferential $\partial\Vert\Omega\Vert_1$:
\begin{align}
	\widehat{Z}_{ij} \in \begin{cases}
		\{\theta_{ij}: \Vert\theta_{ij}\Vert_2\leq 1\}, & \text{  if } \Vert\theta_{ij}\Vert=0; \\
		\frac{\theta_{ij}}{\Vert\theta_{ij}\Vert_2}, & \text{  if } \Vert\theta_{ij}\Vert\not=0.
	\end{cases}
\end{align}
in particular, the stationary condition for a particular $\Omega_{ij}$ is
\begin{align}
	\frac{1}{2}\left(\Omega_{\cdot,i}^\top\widehat{\Sigma}_{\cdot,j}+\widehat{\Sigma}_{\cdot,i}^\top\Omega_{\cdot,j}\right)-1\{i=j\}+\widehat{Z}_{ij}=0.\label{stationary}
\end{align}
%\begin{align}
%	&((\widehat{\Gamma}_i)_{jj}+(\widehat{\Gamma}_j)_{ii})\theta_{ij} + ((\widehat{\Gamma}_i)_{j,\backslash j})\theta_{i,\backslash j}+(\widehat{\Gamma}_j)_{i,\backslash j})\theta_{j \backslash i} \\
%	& +2\widehat{K}_{ij}+ \lambda\widehat{Z}_{ij}=0.
%\end{align}
%
%where $\widehat{Z}_{ij}\in\partial \mathcal{R}(\theta)_{ij}$. After some algebra, we get the update
%
%\begin{align}
%	\theta_{ij}\leftarrow S\left(-\frac{(\widehat{\Gamma}_i)_{j,\backslash j}\theta_{i,\backslash j} +(\widehat{\Gamma}_j)_{i,\backslash i}\theta_{j \backslash i}+2\widehat{K}_{ij}}{(\widehat{\Gamma}_i)_{jj}+(\widehat{\Gamma}_j)_{ii}},\lambda\right),
%\end{align}
%where $S(x,\lambda):=\text{max}\{\left| x\right|-\lambda,0\}\text{sign}(x)$.
Consider updating $\Omega_{ij}$ using equation \eqref{stationary}, solving for $\Omega_{ij}$ and holding the other elements of $\Omega$ fixed. After some manipulation, we get a fixed point for $\Omega_{ij}$ is given by \eqref{fixed}. We cycle through the entries of $\Omega$, applying this update, and repeat until convergence.

\begin{figure}
\begin{center}
\framebox{\begin{minipage}[t]{0.95\columnwidth}

\begin{enumerate}
	\item Initialize $\widehat{\Omega}=I_d$;
	\item For i=1,2,\ldots,d,1,2,\ldots, until convergence:
	\begin{enumerate}
	\item for j=i,\ldots,d:
			\begin{align}
	\widehat{\Omega}_{ij} \leftarrow S\left(-\frac{\left(\widehat{\Omega}_{\backslash j,i}^\top \widehat{\Sigma}_{\backslash j,j} + \widehat{\Omega}_{\backslash i,j}^\top \widehat{\Sigma}_{\backslash i,i} - 2\cdot 1\{j=i\} \right)}{\widehat{\Sigma}_{ii}+\widehat{\Sigma}_{jj}},\lambda\right) ,\label{fixed}
\end{align}
	and set $\widehat{\Omega}_{ji}=\widehat{\Omega}_{ij}$.
	\end{enumerate}
\end{enumerate}
\end{minipage}}
\caption{Gaussian QUASR Coordinate-wise descent}
\end{center}
\end{figure}
Here $S(x,\lambda)$ is the soft thresholding function $S(x,\lambda):=\text{max}\{\left| x\right|-\lambda,0\}\text{sign}(x)$, and $\backslash i := \{1,\ldots,i-1,i+1,\ldots,d\}$. Each update only requires two sparse inner products and a soft thresholding operation. As such, in our experiments this algorithm converges very quickly, sometimes much faster than glasso for the same set of data.

\subsection{Choosing Tuning Parameters}
As of yet we have not discussed how to practically choose the regularization parameter $\lambda$ and for nonparametric score matching, the truncation parameters $m_1,m_2$. We suppose the existence of a held-out tuning set; in the absence, one may use cross-validation. If the likelihood is available, for example if fitting Gaussian score matching, or for a fixed graph which is a tree, we minimize the negative log-likelihood risk in the held out set. In the absence of the likelihood, we choose the tuning parameters to minimize the Hyv\"arinen score of the held out set. For a discussion on using scoring rules as a replacement for the likelihood in model selection and using \emph{score differences} as surrogates for Bayes factors, see \citep{dawid2014theory}.

To save on computation, we use the idea of \emph{warm starts} which we detail in the sequel.
First, observe that the first-order necessary conditions for regularized score matching are:
\begin{align}
	 \widehat{\Gamma}\widehat{\theta}+\widehat{K}+\widehat{Z}=0,
\end{align}
where $\widehat{Z}$ denotes the sub gradient of the regularizer $\mathcal{R}$, at $\widehat{\theta}$, which is

\begin{equation}
	\widehat{Z}_{ij} = \begin{cases}
		\{x:\Vert x\Vert\leq 1\}, & \Vert\theta_{ij}\Vert = 0, \\
		\frac{\theta_{ij}}{\Vert\theta_{ij}\Vert}, & o/w. 	
	\end{cases}
\end{equation}
so $\widehat{\theta}=0$ when
\begin{equation}
	\lambda \geq \max_{ij}\Vert\widehat{K}_{ij}\Vert.
\end{equation}
This allows us to choose an upper bound $\lambda_{start}$ such that the solution will be the zero vector.

The idea behind warm starting is the following: we begin with estimating $\widehat{\theta}_{\lambda_{start}}=0$. Then we fit our model on a path of $\lambda$ decreasing from $\lambda_{start}$, initializing each new problem with the previous solution $\widehat{\theta}_{\lambda}$. The solution path for the regularized MLE is smooth as a function of $\lambda$, suggesting nearby choices of $\lambda$ will provide values of $\widehat{\theta}$ which are close to one another.
 
We can also incorporate warm-starting in choosing $m_1,m_2$. For a given $\lambda$, we first estimate the model for first-order polynomials, corresponding to $m_1=m_2=1$. We then increment the truncation parameters by increasing the degree of the polynomial of he sufficient statistics. We augment the previous parameter estimate vector with zeros in the place of the added parameters, and warm start \emph{ISTA} from this vector. See Section \ref{cadmm} for other computation savings when augmenting the sufficient statistics when choosing $m_1,m_2$.

\section{Experiments}

\subsection{Gaussian Score Matching}

We begin by studying Gaussian score matching, and comparing to the regularized Gaussian MLE, using the glasso package in $\tt{R}$ \citep{friedman2008sparse}. We consider experiments with two graph structures: in the first, a tree is generated randomly; this has $d-1$ edges. In the second, a graph is generated where an edge occurs between node $i$ and $j$ with probability $0.1$, denoted the Erd\"os-Renyi graph. This graph has expected number of edges $0.05\cdot d(d-1)$. The data is scaled to have unit variance and mean zero.

\subsubsection{Regularization Paths}
Figures \ref{regpath1} and \ref{regpath2} display regularization paths for one run of these simulations, where $d=100$ and $n$ is either 100 or 500. Relevant variables are plotted in black. For $n=500$, it appears that the score matching estimator does a better job screening out irrelevant variables for both graph types. For $n=100$ they perform similarly. The score matching estimator tends to produce nonzero parameter estimates which are larger in magnitude than the regularized MLE, which is more pronounced for $n<d$.

\begin{figure}
\centering
\includegraphics[scale=.5]{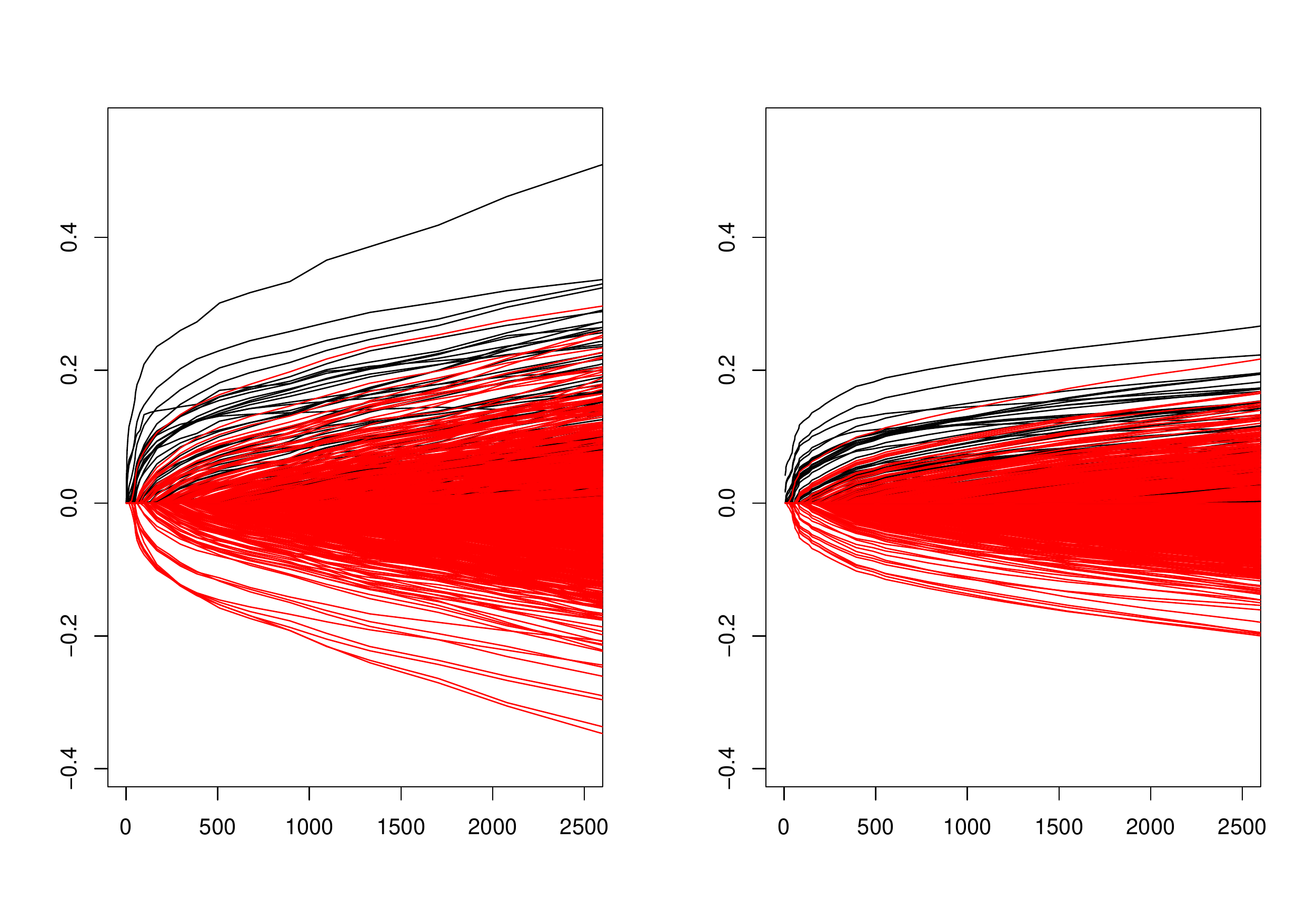}
\includegraphics[scale=.5]{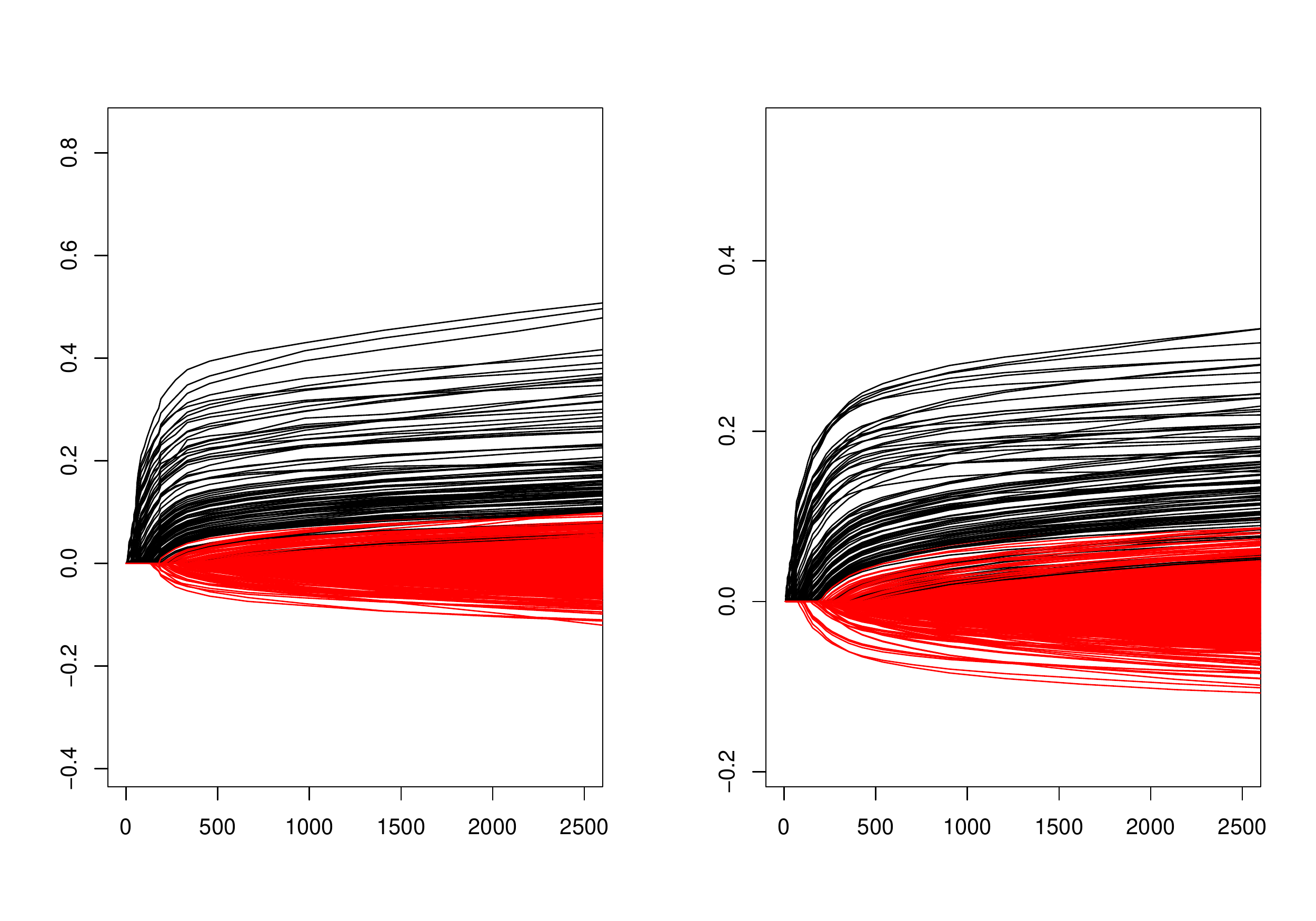}
\caption{Regularization path, tree graph. d=100. Top: n=100. Bottom: n=500.} \label{regpath1}
\end{figure}

\begin{figure}
\centering
\includegraphics[scale=.5]{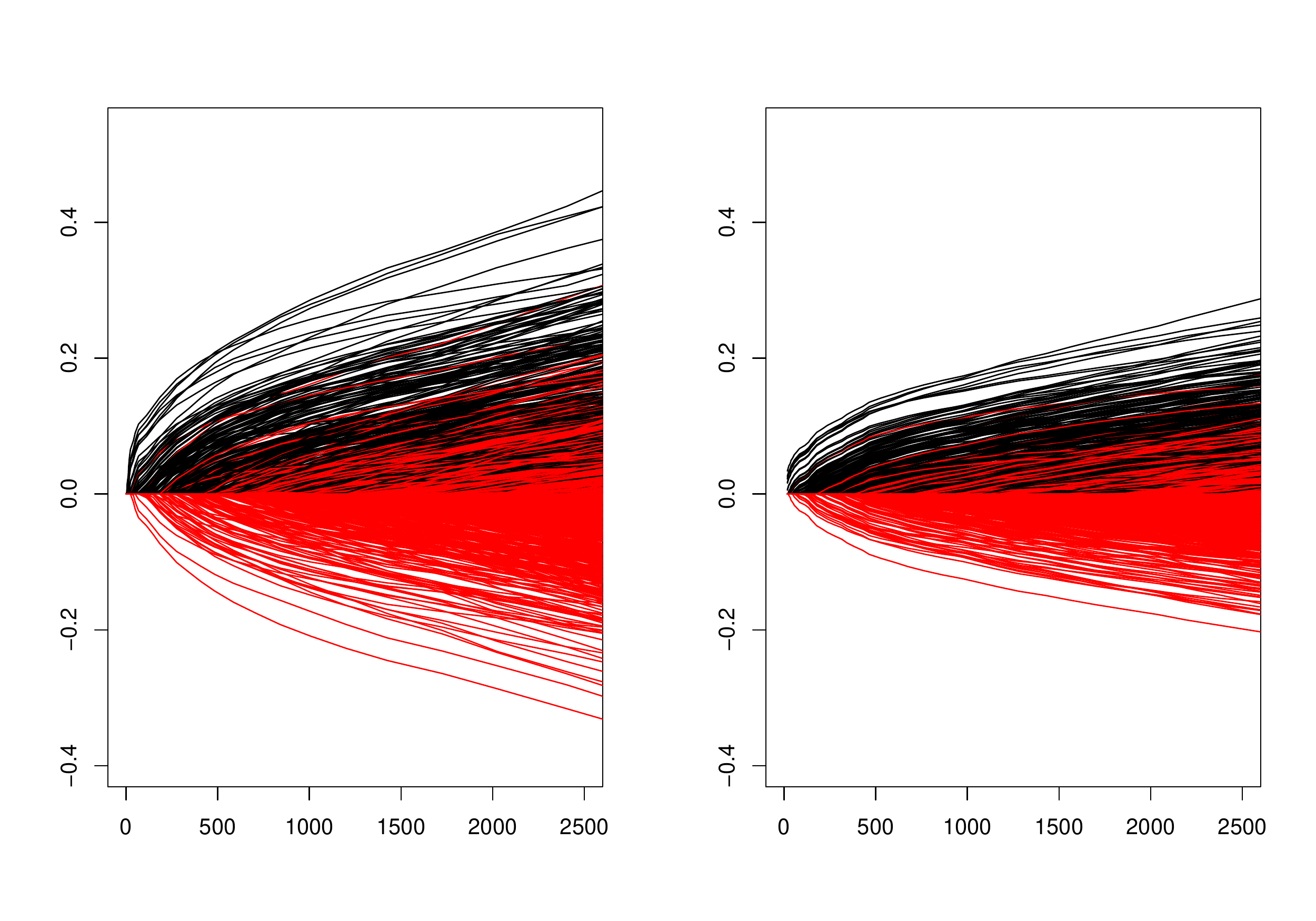}
\includegraphics[scale=.5]{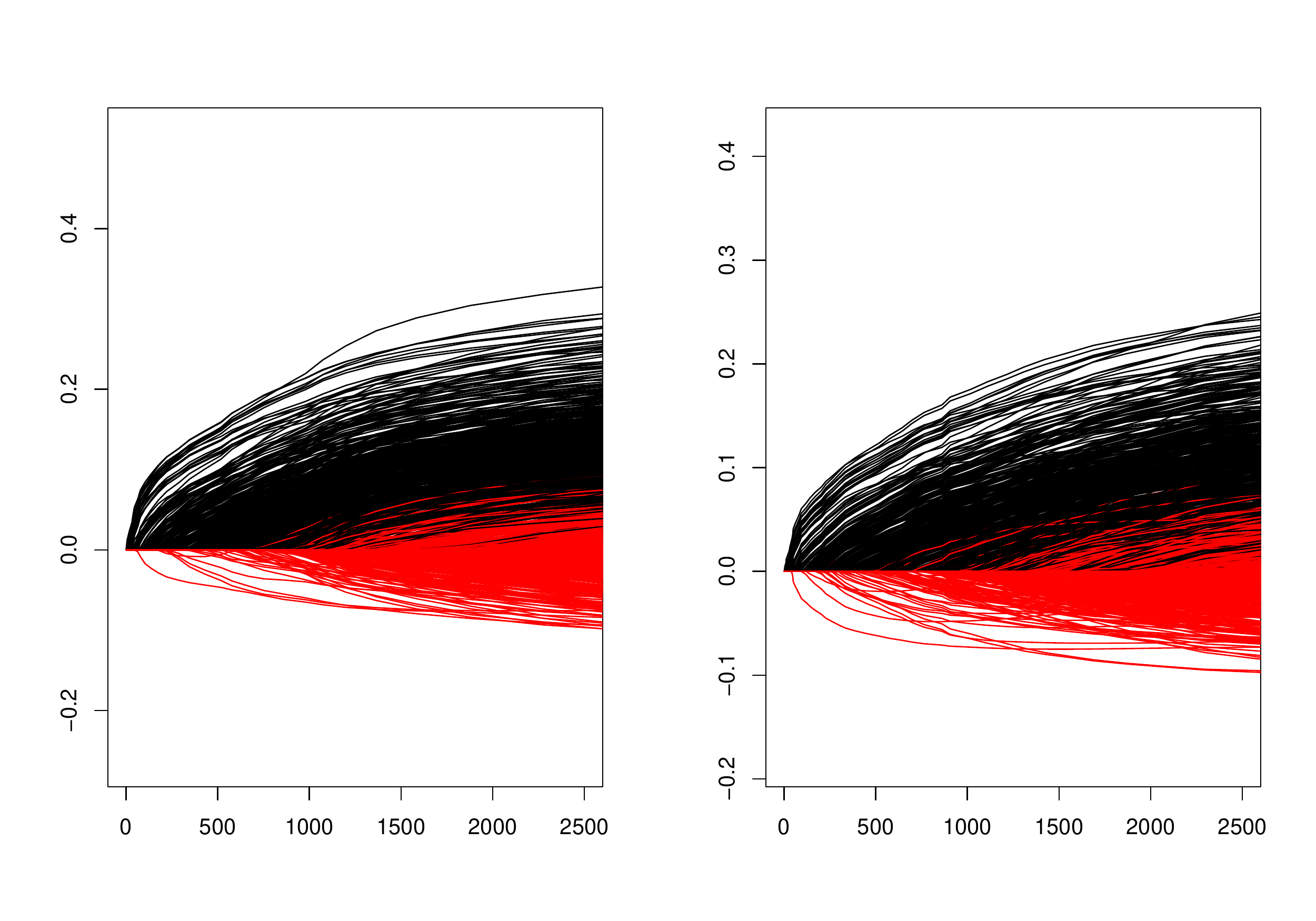}
\caption{Regularization path, Erdos-Renyi graph graph. d=100. Top: n=100. Bottom: n=500.} \label{regpath2}
\end{figure}

\subsubsection{Risk Paths}
Figures \ref{riskpath1} and \ref{riskpath2} show risk paths under the two graph structures; figure two has $d=150$, with 149 included edges; figure four has $d=100$, with 499 edges included. We choose $n=100$, and calculate the negative log likelihood risk using a held-out dataset of size $n$. The plotted curves are an average of 25 simulations from the same distribution.  In figure \ref{riskpath1}, we see the score matching estimator selects a sparser graph than the regularized MLE; furthermore, the score matching produces an estimator with smaller held-out risk. For the Erd\"os-Renyi simulation, the score matching estimator also selects a sparser graph, though it has risk slightly worse than the MLE.
\begin{figure}
\centering
\includegraphics[scale=.5]{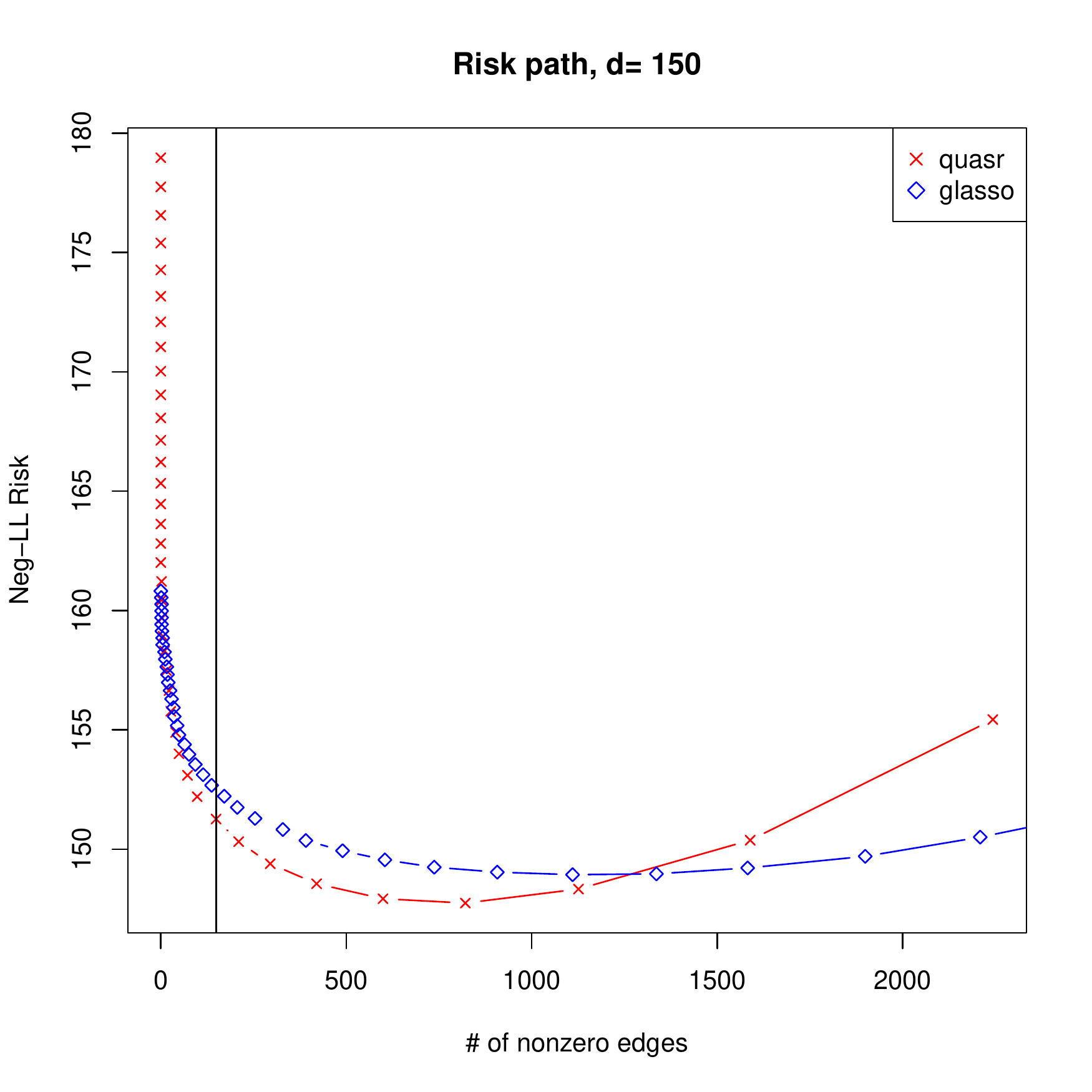}
\caption{Risk path, tree graph} \label{riskpath1}
\end{figure}

\begin{figure}
\centering
\includegraphics[scale=.5]{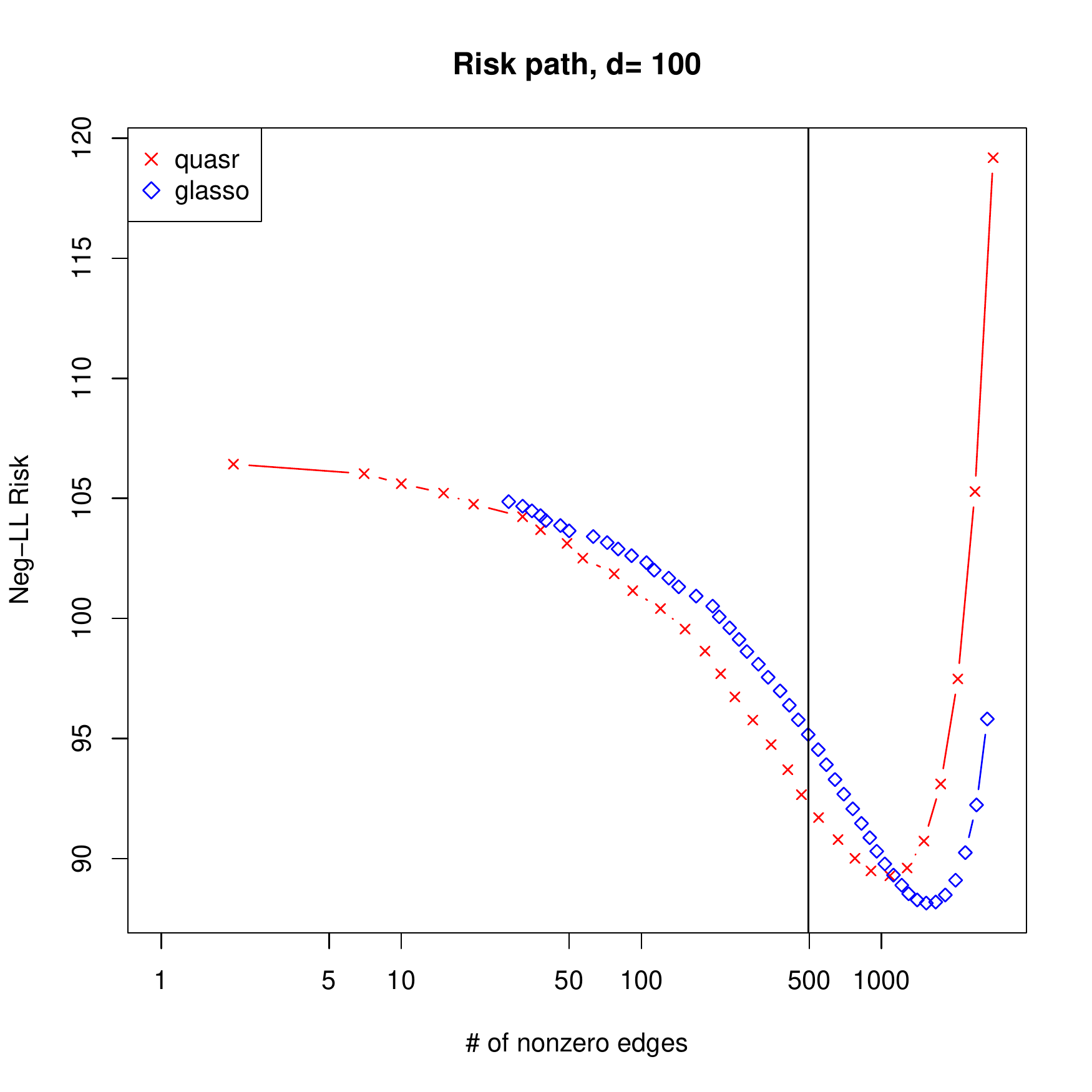}
\caption{Risk path, ER graph} \label{riskpath2}
\end{figure}
These findings are also validated in Tables \ref{table1} and \ref{table2}, varying $d$. For the tree graph, score matching dominates in risk for all values of $d$. Even for the Erd\"os-Renyi graph, score matching outperforms the regularized MLE when $d=30$.  Standard errors of 25 repetitions are in parentheses.
\begin{table}[ht]
\centering
\begin{tabular}{lll}
  \hline
 & quasr & glasso  \\
  \hline
d= 30 & 28.082 (0.341) & 28.189 (0.319)  \\
  d= 75 & 72.785 (0.472) & 73.202 (0.432)  \\
  d= 120 & 116.771 (0.578) & 117.631 (0.542) \\
  d= 150 & 147.26 (0.619) & 148.381 (0.583)  \\
   \hline
\end{tabular}
\caption{Held-out NLL error, tree graph} \label{table1}
\end{table}
\begin{table}[ht]
\centering
\begin{tabular}{lll}
  \hline
 & quasr & glasso  \\
  \hline
d= 30 & 26.157 (0.428) & 26.244 (0.386)  \\
  d= 50 & 44.351 (0.688) & 44.262 (0.718)  \\
  d= 100 & 91.383 (1.009) & 90.351 (1.077)  \\
  d= 150 & 139.03 (1.15) & 136.793 (1.336)  \\
   \hline
\end{tabular}
\caption{Held-out NLL error, ER graph} \label{table2}
\end{table}

\subsubsection{ROC Curves and Edge Selection}

Figures \ref{roc1} and \ref{roc2} show ROC curves under the same simulation setup in the previous section. The plotted points represent the graph selected from the minimal held-out risk in each of the 25 repetitions.
The two estimators display very similar ROC curves, and the score matching estimator tends to prefer higher sensitivity for lower specificity, when selecting using held-out data.

\begin{figure}
\centering
\includegraphics[scale=.5]{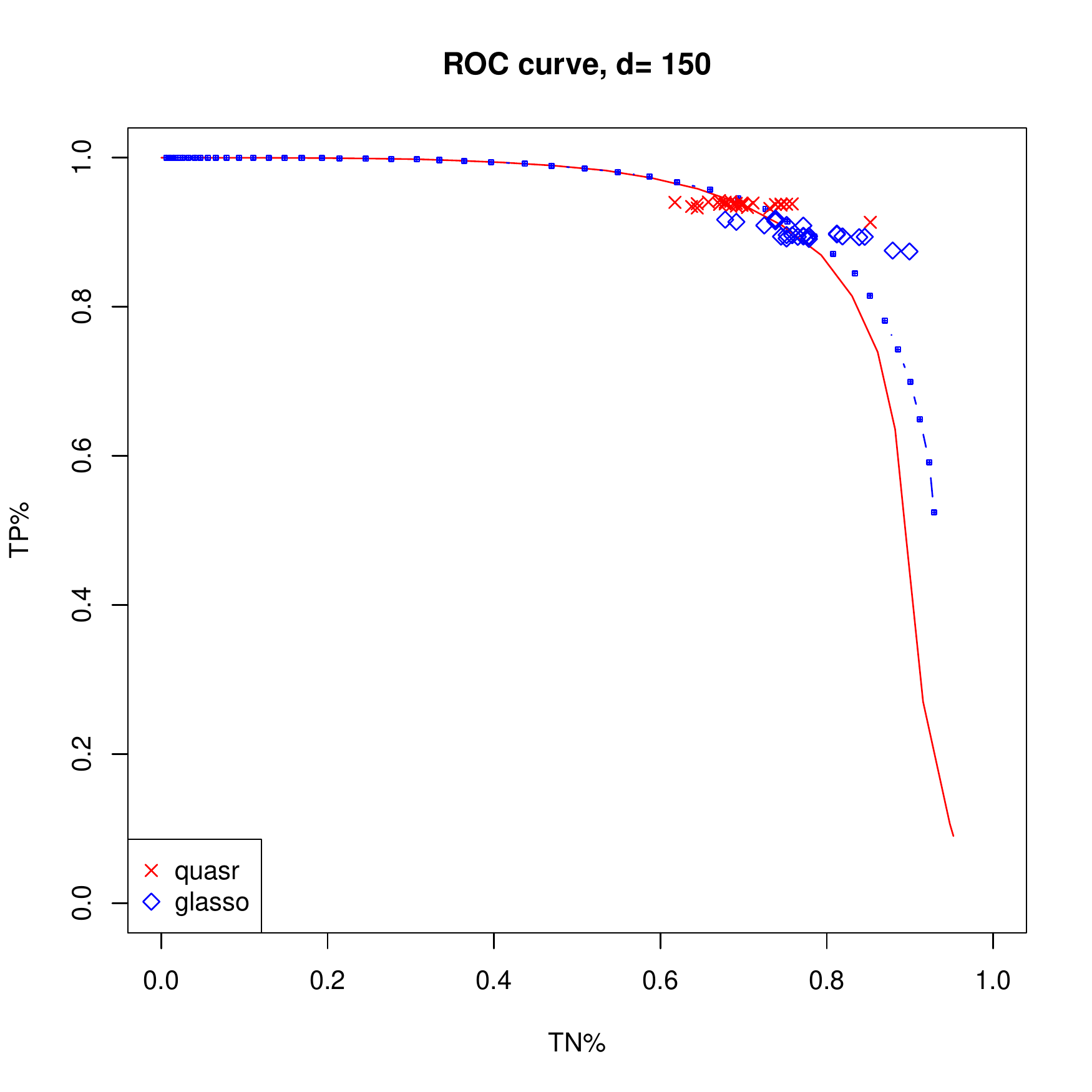}
\caption{ROC curve, tree graph. Solid line: Gaussian QUASR; dotted line: glasso.} \label{roc1}
\end{figure}

\begin{figure}
\centering
\includegraphics[scale=.5]{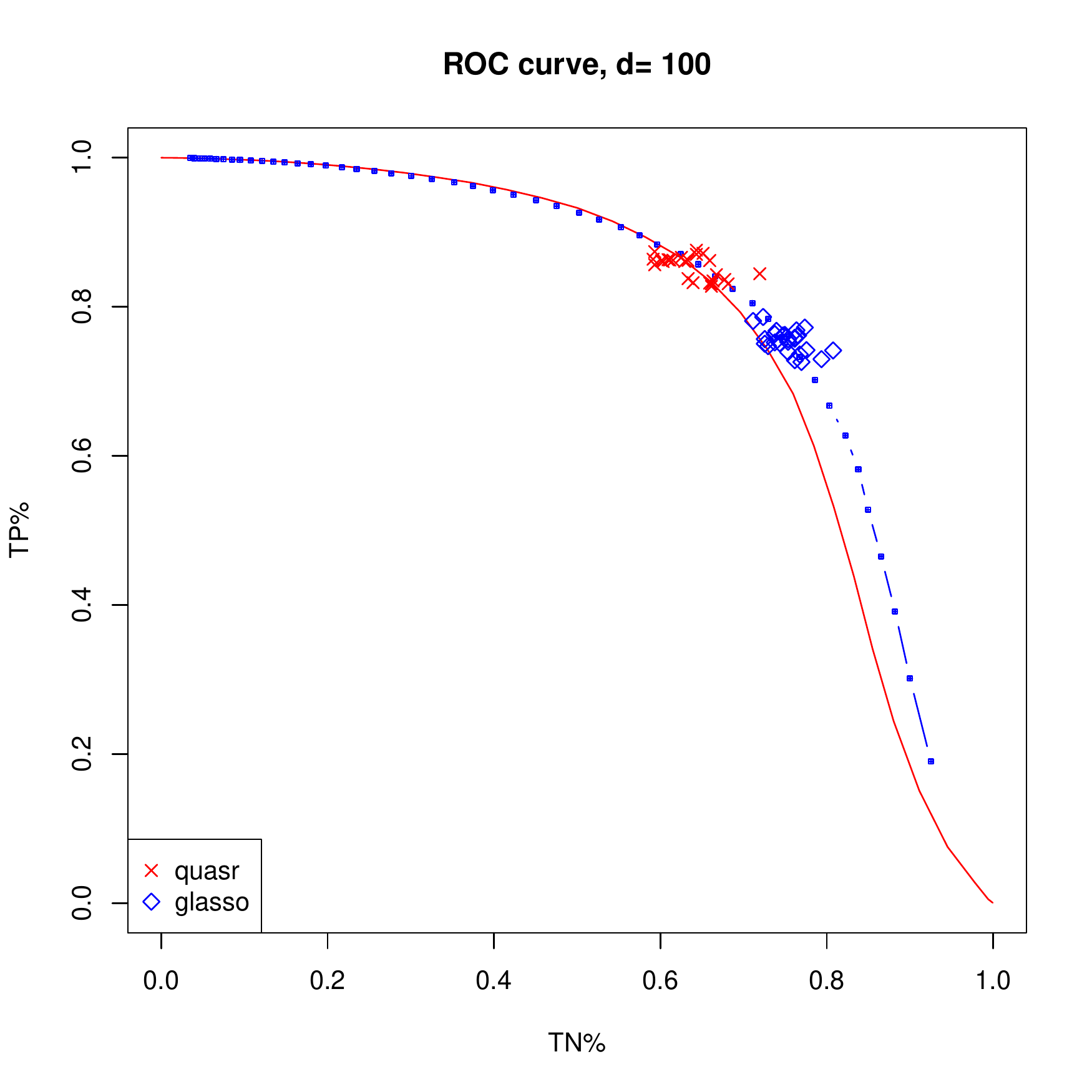}
\caption{ROC curve, Erdos-Renyi graph. Solid line: Gaussian QUASR; dotted line: glasso.} \label{roc2}
\end{figure}
Tables \ref{edgesel1} and \ref{edgesel2} displays true positive and true negative rates for varying choices of $d$, fixing $n=100$. The parentheses are the standard deviation for 25 repetitions of the experiment. As are suggested by the ROC curves, score matching prefers a higher sensitivity and lower specificity to the regularized MLE, and for simulations when $d$ is relatively small compared to $n$, score matching has a significantly higher true positive rate with only negligible reduction in true negative rate.

\begin{table}[ht]
\centering
\begin{tabular}{lll}
  \hline
 & quasr & glasso  \\
  \hline
d=30 \%TN & 0.941 (0.043) & 0.975 (0.025)  \\
  d=30 \%TP & 0.778 (0.051) & 0.674 (0.042)  \\
  d=75 \%TN & 0.821 (0.037) & 0.876 (0.042)  \\
  d=75 \%TP & 0.892 (0.017) & 0.832 (0.017)  \\
  d=120 \%TN & 0.751 (0.052) & 0.816 (0.044)  \\
  d=120 \%TP & 0.922 (0.01) & 0.874 (0.009)  \\
  d=150 \%TN & 0.699 (0.049) & 0.776 (0.052)  \\
  d=150 \%TP & 0.936 (0.006) & 0.899 (0.012)  \\
   \hline
\end{tabular}
\caption{Edge selection accuracy, tree graph.} \label{edgesel1}
\end{table}

\begin{table}[ht]
\centering
\begin{tabular}{lll}
  \hline
 & quasr & glasso  \\
  \hline
d=30 \%TN & 0.969 (0.028) & 0.986 (0.02)  \\
  d=30 \%TP & 0.743 (0.036) & 0.612 (0.039) \\
  d=50 \%TN & 0.878 (0.028) & 0.927 (0.025)  \\
  d=50 \%TP & 0.785 (0.027) & 0.673 (0.032)  \\
  d=100 \%TN & 0.633 (0.029) & 0.754 (0.02)  \\
  d=100 \%TP & 0.858 (0.013) & 0.753 (0.013)  \\
  d=150 \%TN & 0.486 (0.019) & 0.634 (0.018)  \\
  d=150 \%TP & 0.892 (0.01) & 0.791 (0.011)  \\
   \hline
\end{tabular}
\caption{Edge selection accuracy, ER graph} \label{edgesel2}
\end{table}

\subsubsection{Computation}

In Figure \ref{cputree} we compare the runtime of our algorithm with glasso. We simulate random Gaussian tree data with $n=100$, $d=50$. We fit over a path of $\lambda$s and plot runtime against number of selected edges. More regularization results in sparser graphs, and so convergence is faster. In this experiment our method is much faster than glasso, sometimes by a factor of 4 or more. The gap narrows for sparse estimated graphs. This is because while our algorithm is written efficiently in $\tt{C++}$, it doesn't (yet) utilize sparse matrix libraries, while glasso does. Since our coordinate-wise descent algorithm involves sparse inner products, we believe our runtimes can be improved in the sparse regime.

\begin{figure}
\centering
\includegraphics[scale=.9]{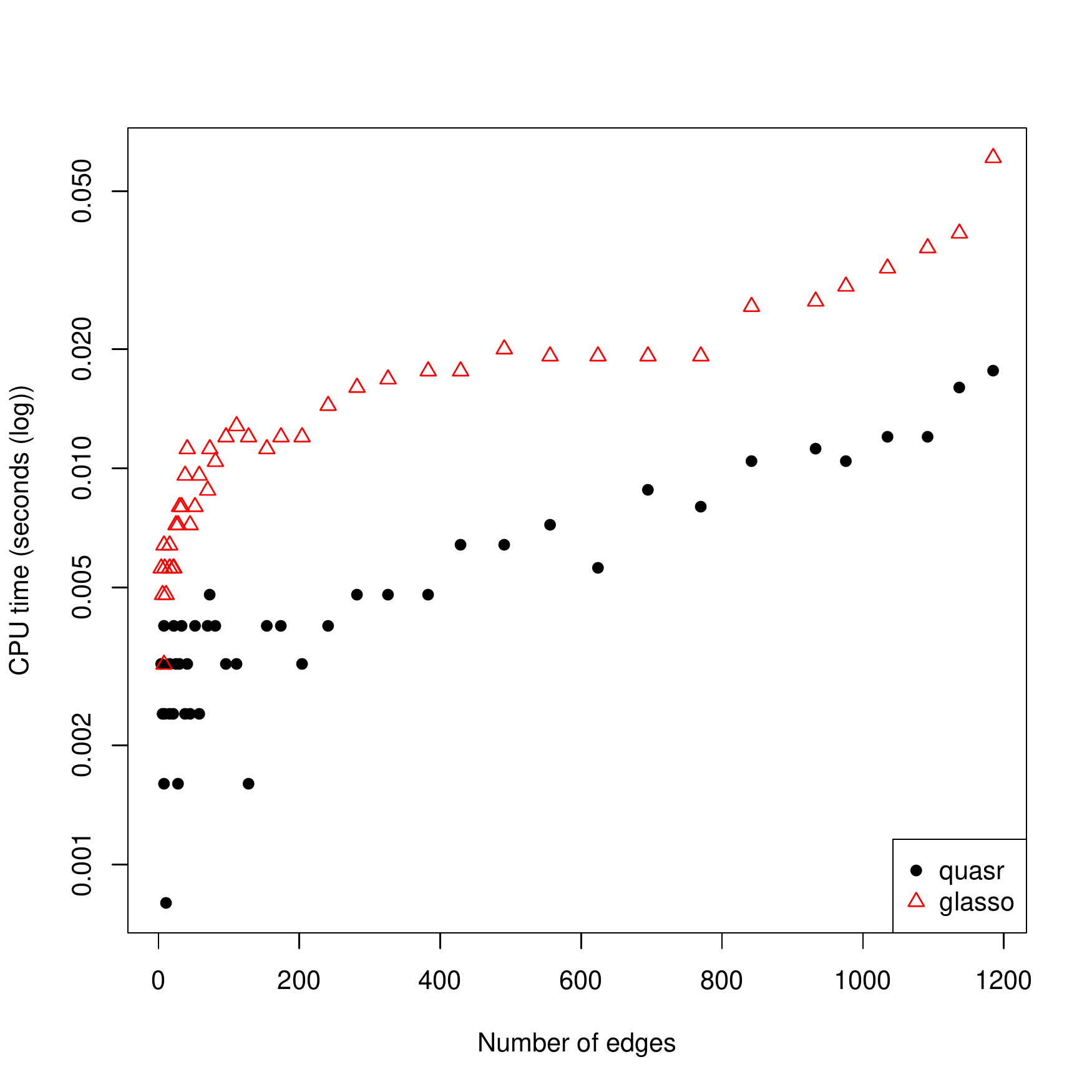}
\caption{Runtime, Gaussian QUASR and glasso. Gaussian data, n=100, d=50}. \label{cputree}
\end{figure}
\subsection{Nonparametric Score Matching}

\subsubsection{Risk and Density Estimation}
In this section we compare score matching and MLE when the sufficient statistics are chosen to be Legendre polynomials. 
%We fix $m_1=m_2=4$. 
For each choice of $n$, we simulate non-Gaussian data whose density factorizes as a tree. We do this by marginally applying the transformation $y=\text{sign}(x-0.5) \mid x - 0.5 \mid ^{0.6}/5 + 0.5$ to Gaussian data which has a tree factorization, which has been scaled to have means 0.5 and covariance $1/8^2$, so it fits in the unit cube; the resulting data follows a Gaussian copula distribution, which is also a pairwise distribution. We train the model using both regularized MLE and score matching under constraint that it factorizes according to the given tree, and we  estimate along a path of $\lambda$s and choose the regularization parameter $\lambda$ to minimize the held-out risk. Since the density has a (known) tree factorization, it is possible to compute the likelihood (and hence the MLE) exactly using functional message passing (\cite{janofsky2015exponential}, chapter 3). It is also possible to compute the marginals using the same message passing algorithm.

Figure \ref{nprisk} displays the held-out risk for both methods, with sample size varying from 24 to 5000 and $d=20$. We average over 5 replications of the experiment. The MLE outperforms the score matching estimator, but the score matching estimators performance greatly improves relative to the MLE as $n$ increases. 

Figure \ref{npcontour} displays contours from one bivariate marginal from the aforementioned simulation. The top row shows the MLE and score matching estimator for $n=24$, and the bottom for $n=182$. For $n=24$, the score matching marginal can make out  much of the distinguishing features of the density such as the multiple modes, but isn't as informative as the MLE. At $n=182$ the marginals appear almost identical.
\begin{figure}
\centering
\includegraphics[scale=.9]{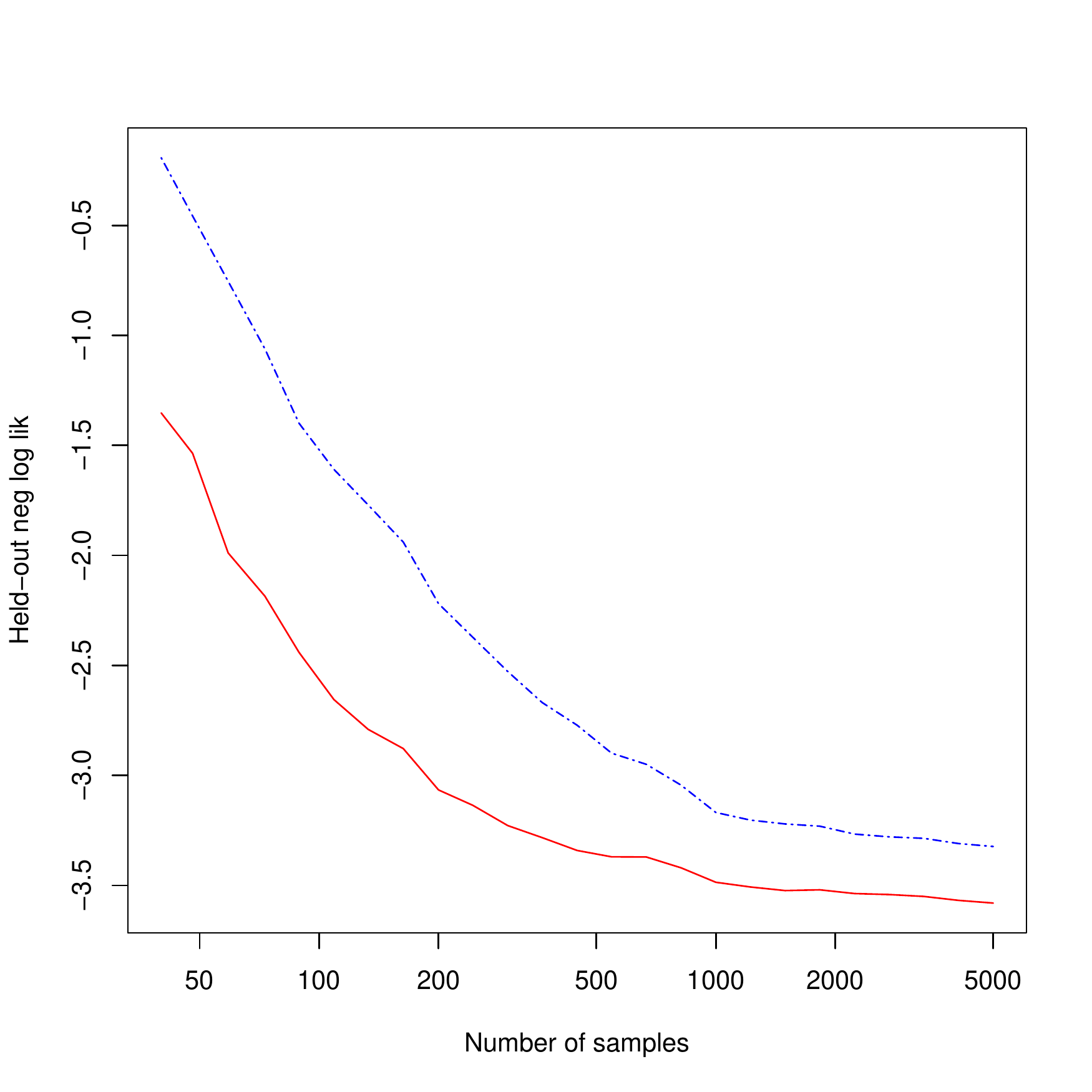}
\caption{Held-out negative log likelihood for different training sizes. Red: regularized MLE; Blue dashed: score matching. Data generated from a non-Gaussian tree distribution.} \label{nprisk}
\end{figure}
We conclude that while MLE is more efficient as may be expected, score matching performs quite well, especially with larger sample sizes. 
\begin{figure}
\centering
\includegraphics[page=1,scale=.4]{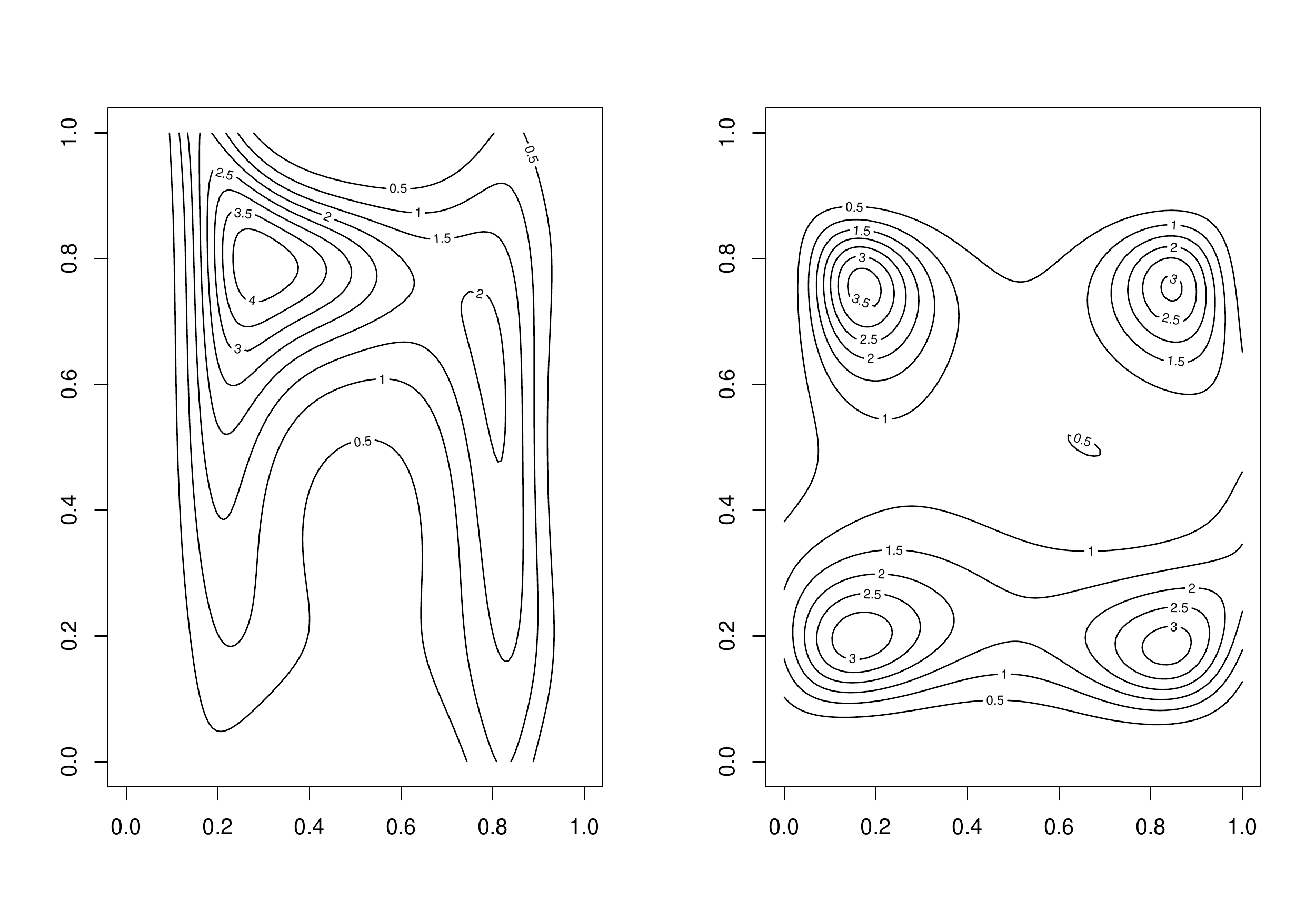}
\includegraphics[page=2,scale=.4]{bivdenrisk.pdf}
\includegraphics[page=3,scale=.4]{bivdenrisk.pdf}

\caption{Estimated bivariate marginal from non-Gaussian tree data, d=20. Left: regularized MLE; Right: Score matching. Top: n=40. Middle: n=244 Bottom: n=1828.} \label{npcontour}
\end{figure}
Furthermore, we emphasize that at this cost of statistical efficiency, score matching can be computed easily under any graph structure, even when the likelihood is not tractable, while MLE is typically not tractable and must be approximated.

\subsubsection{ROC Curves}

Figures \ref{rocquasr1} and \ref{rocquasr3} display ROC curves from four experiments. We simulate data with $d=20$ and $n$ either 30 or 100. In the first two experiments, the data is Gaussian; in the last two, it is non-Gaussian (copula). The graph is either a random spanning tree with $d-1$ edges, or a graph with each possible edge having inclusion probability $0.2$, or expected number of edges $d(d-1)*.1$. The ROC curves trace the true positive and true negative rates, varying the value of $\lambda$. The curves are averaged over 10 repetitions of the experiment (with the data i.i.d. from the same distribution). We choose $m_1,m_2$ to maximize $\max_\lambda TP(\lambda)+TN(\lambda)$. We compare our method to the SKEPTIC estimator from \citep{liu2012high},  which was designed in particular for model selection for copula graphical models, and a tree-reweighted approximation to the MLE from \cite{janofsky2015exponential}. Our experiments show our method performs just as well or slightly worse than the competing methods, despite the fact that SKEPTIC is specifically tailored for Gaussian and Gaussian copula graph learning.

\begin{figure}
\centering
\includegraphics[page=1,scale=.5]{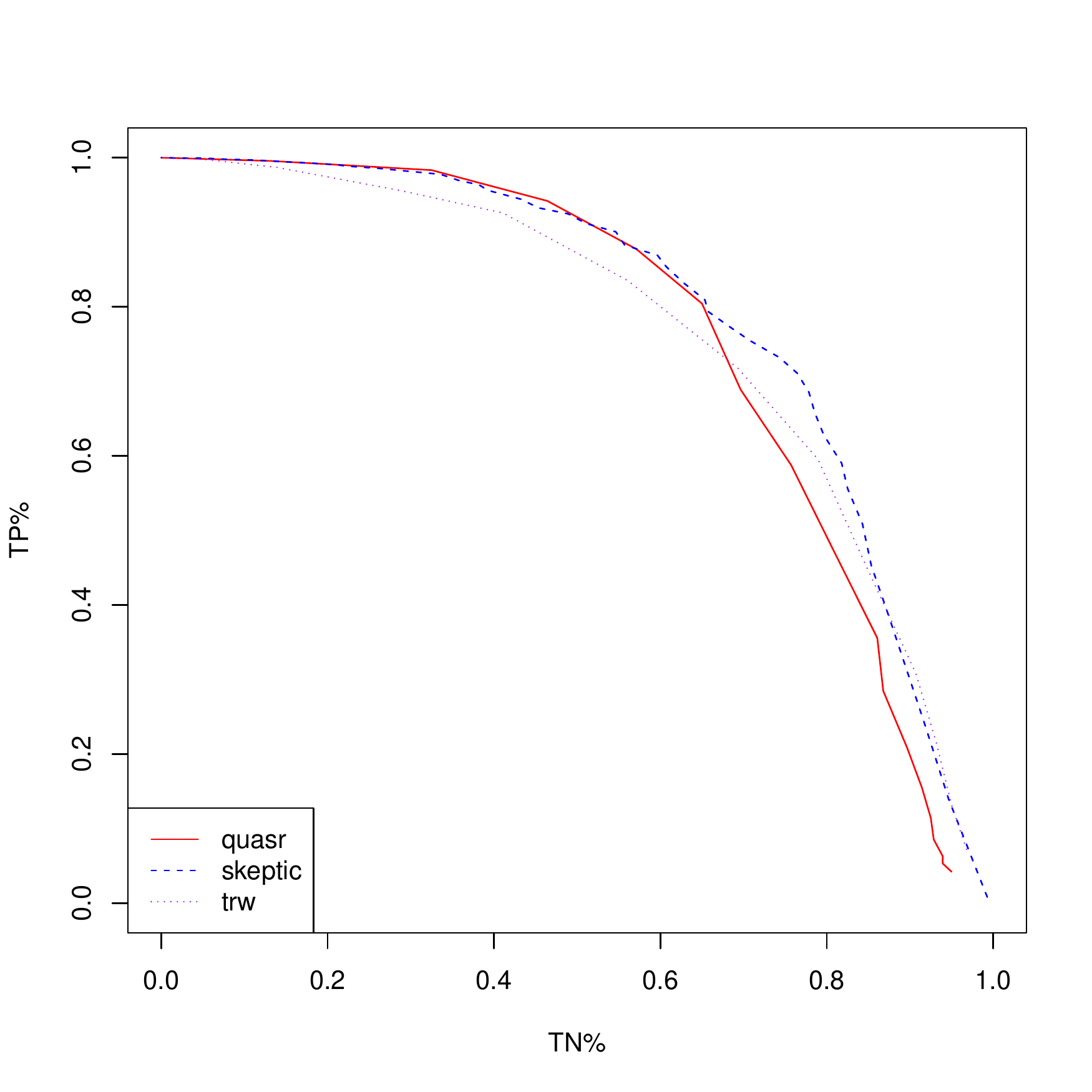}
\includegraphics[page=1,scale=.5]{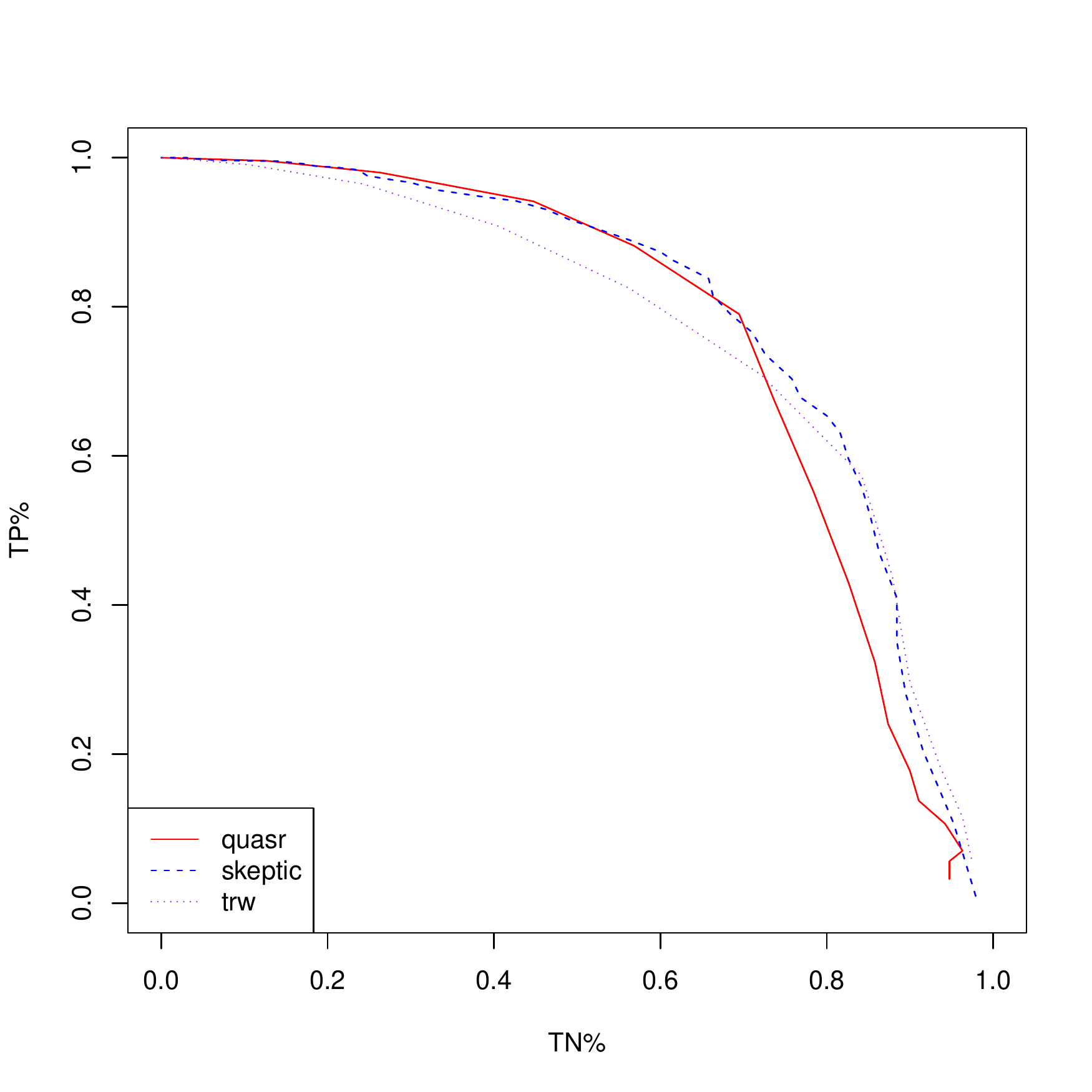}

\caption{ROC Curve, Gaussian data, n=30, d=20. Top: ER graph. Bottom: Tree graph.}\label{rocquasr1}
\end{figure}

%\begin{figure}
%\centering
%\includegraphics[page=1,scale=.5]{/Users/eric/Dropbox/quasr/rocquasrtreegauss.pdf}
%\caption{ROC Curve, tree graph, Gaussian data. n=30, d=20.}\label{rocquasr2}
%\end{figure} 

\begin{figure}
\centering
\includegraphics[page=1,scale=.5]{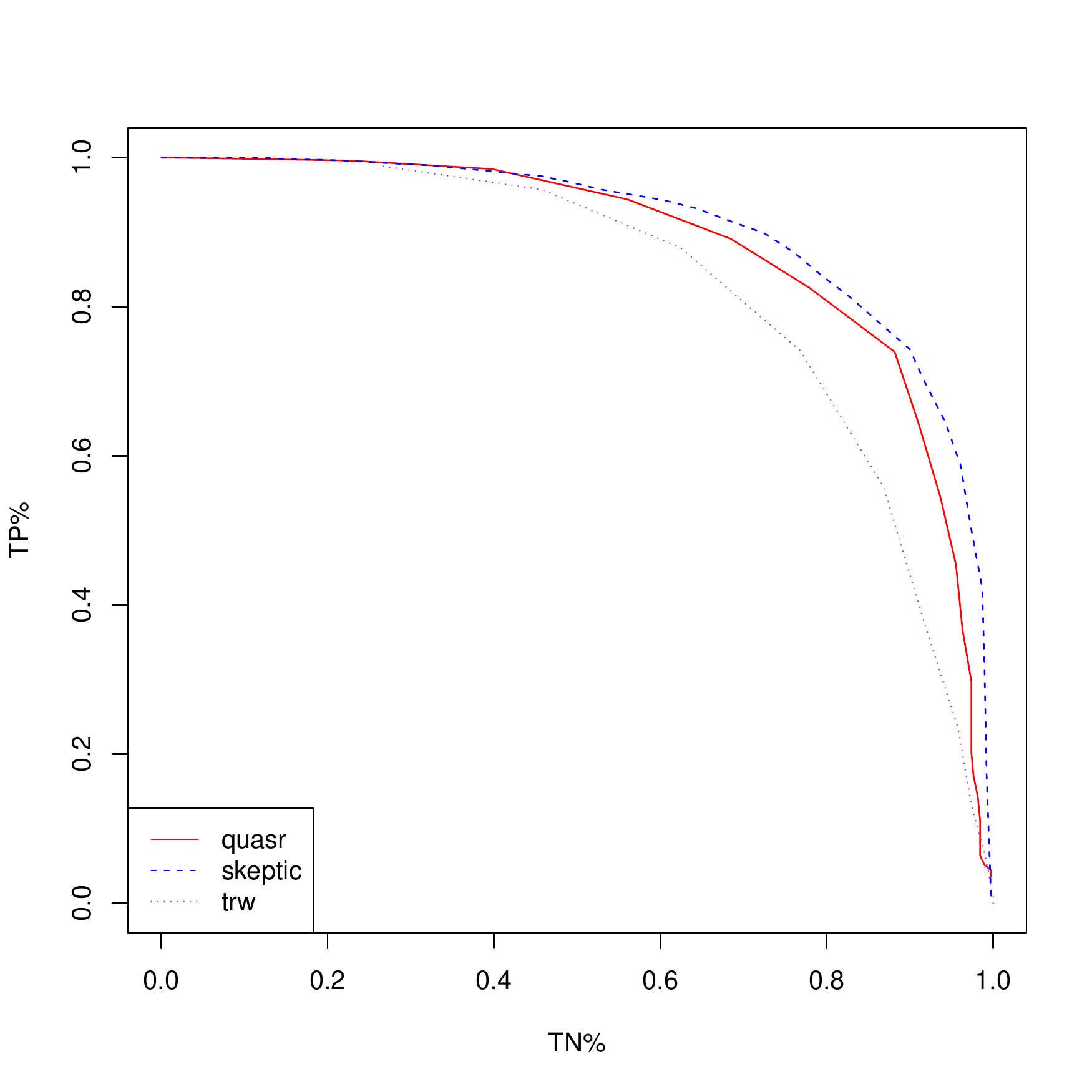}
\includegraphics[page=1,scale=.5]{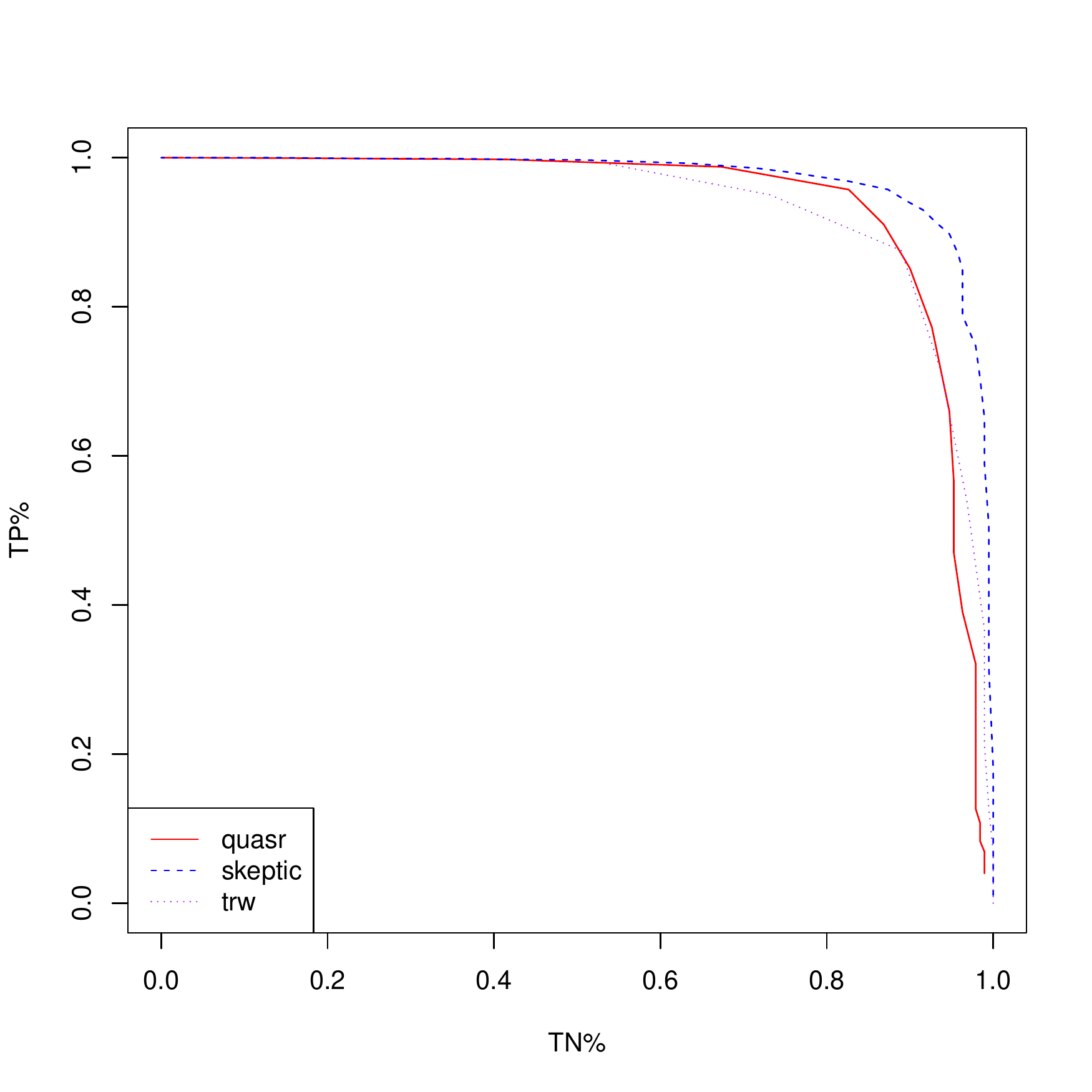}
\caption{ROC Curve, non-Gaussian data, n=100, d=20. Top: ER graph. Bottom: Tree graph.}\label{rocquasr3}
\end{figure}

%\begin{figure}
%\centering
%\includegraphics[page=1,scale=.5]{/Users/eric/Dropbox/quasr/rocquasrtreenongauss.pdf}
%\caption{ROC Curve, tree graph, non-Gaussian data. n=100, d=20}\label{rocquasr4}
%\end{figure}

\section{Discussion}

This work introduces a new approach to estimating pairwise, continuous, exponential family graphical models. Since the normalizing constant for these models is usually intractable, we propose a new scoring rule which obviates the need for computing it. Our resulting estimator may be expressed as a second-order cone program. We show consistency and edge selection results for this estimator, including as special cases a new method for precision matrix estimation, and for nonparametric edge selection with exponential series. We propose algorithms for solving the convex problem which are highly scalable and amenable to parallelization.

This work raises several areas for further work. The regularized MLE, which is not tractable, has been shown to have stronger guarantees for edge selection for the nonparametric graphical model,  and it's an open question whether a computationally tractable method can achieve these bounds. There exist other proper scoring rules \cite{dawid2014theory}, few of which have been explored or understood in the context of learning exponential family graphical models.

\section{Proofs}

\begin{proof}[Proof of Proposition \ref{boundedscorematch}]

Consider the functional

\begin{align}
	J(q) &:= \mathbb{E}_p\left[(\Vert\nabla \log p-\nabla \log q )\otimes x(1-x)\Vert_2^2\right].
\end{align}

If $J(q)=0$, then it must be $\nabla \log p= \nabla \log q$  a.e., because their integrated squared distance is zero with respect to a weight function which is nonzero a.e. This implies $\log q = \log p + c$ a.e. for some constant $c$, but $c=0$ because $p$ and $q$ must both integrate to one. Furthermore, $J$ is non-negative so it is minimized when $q=p$. If $p$ and $q$ belong to an exponential family with respective natural parameters $\theta$ and $\theta'$, $\theta=\theta'$ when the family is minimal.

Now,

\begin{align}
	J(q) &= \mathbb{E}_p\left[\Vert\nabla \log q \otimes x(1-x)\Vert_2^2\right] \\
	&\qquad+ 2\mathbb{E}_p\left[\sum_{i\in V} (\nabla_i \log q \cdot \nabla_i \log p)\otimes x(1-x)\Vert_2^2\right]+  constant, \notag
\end{align}

the constant not depending on $q$. We have, by integration by parts,

\begin{align}
	\mathbb{E}_p\left[(\nabla_i \log q \nabla_i \log p)x_i(1-x_i)\right] &=\intop p_i (\nabla_i \log q \nabla_i \log p)x_i(1-x_i) \\
	&= \intop p_i \frac{\nabla_i p_i}{p_i}(\nabla_i \log q )x_i(1-x_i) \\
	&= p_i(x_i)(\nabla_i \log q x_i(1-x_i))\bigg]_{x_i=1} \\
	&\qquad-p_i(x_i)(\nabla_i \log q x_i(1-x_i))\bigg]_{x_i=0} \notag\\
	&\qquad- \intop p_i \nabla_i (\nabla_i \log q x_i(1-x_i)) \notag\\
	&= - \intop p_i \nabla_i (\nabla_i \log q x_i(1-x_i)), 
\end{align}

where in the last line we applied the boundary assumption. Thus, we see that $J(q)$ is equal to $\mathbb{E}_p\left[h(X,q)\right]$ plus some terms which don't depend on $q$, so from the argument above $\mathbb{E}_p\left[ h(X,q)\right]$ is minimized when $p=q$. We conclude that $h$ is a proper scoring rule.

%Suppose we apply the marginal transformation $Y_i = f(X_i):= \text{logit}(X_i)=\log\left(\frac{X_i}{1-X_i}\right)$.  $f^{-1}(y)=\frac{e^y}{1+e^y}$, the logistic function, and $\frac{\partial f^{-1}}{\partial x}=f^{-1}(x)(1-f^{-1}(x)$, and $\frac{\partial f}{\partial y}=\frac{1}{y(1-y)}$. Then the joint distribution of $Y$ is given by
%
%\begin{align}
%	p_y(y) &= \prod_{i\in V} \left(f^{-1}(y_i)(1-f^{-1}(y_i))\right)^{-1}p_x(f^{-1}(y))
%\end{align}
%
%For $i\in V$ we have
%
%\begin{align}
%	\frac{\partial \log p_y(y)}{\partial y_i} &= -\left(\frac{e^{y_i}-1}{e^{y_i}+1}\right)+\left(\frac{\partial \log p_x(f^{-1}(y))}{\partial y_i}\right)f^{-1}(y_i)(1-f^{-1}(y_i)\\
%	&= 2x_i-1 + \frac{\partial \log p_x(x)}{\partial x_i} x_i(1-x_i),
%\end{align}
%giving
%
%\begin{align}
%	\left(\frac{\partial \log p_y(y)}{\partial x_i}\right)^2 &= \left(2x_i-1\right)^2 + \left(\frac{\partial \log p_x(x)}{\partial x_i}\right)^2 x_i^2(1-x_i)^2 \\
%	& + 2(2x_i-1)x_i(1-x_i)\frac{\partial \log p_x(x)}{\partial x_i}
%\end{align}
%and
%
%\begin{align}
%	\frac{\partial^2 \log p_y(y)}{\partial y_i^2} &= \frac{2e^{y_i}}{(1+e^{y_i})^2} + \frac{\partial^2 \log p_x}{\partial x_i^2} f^{-1}(y_i)(1-f^{-1}(y_i))+ \frac{\partial \log p_x}{\partial x_i}\frac{e^{y_i}(e^{y_i}-1)}{(1+e^{y_i})^3} \\
%	&= x_i(1-x_i)\frac{\partial^2 \log p_x}{\partial x_i^2} + 2x_i(1-x_i)+ (2x_i-1)x_i(1-x_i)\frac{\partial \log p_x}{\partial x_i}
%\end{align}

\end{proof}

\section{Parameter Estimation}

\begin{lem}\label{parconsistlem}
	If $n\geq Cmd$ and $\lambda_n\geq 2\mathcal{R}^*((\widehat{\Gamma}-\Gamma)\theta^*+\widehat{K}-K)$, with probability at least $1-2d\exp\left\{-\frac{\bar{\epsilon}^2}{4\underline{\epsilon}^2}md\right\}$, the regularized score matching estimator $\widehat{\theta}$ satisfies
	
\begin{align}
	\Vert\widehat{\theta}-\theta^*\Vert_2 &\leq \frac{7\lambda_n}{\underline{\epsilon}}\sqrt{d+\left| E\right|}.
\end{align}
\end{lem}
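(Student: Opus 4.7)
The plan is to use the standard ``basic inequality'' for $M$-estimators with a decomposable regularizer, adapted to the group penalty $\mathcal{R}(\theta) = \sum_{i,j}\|\theta_{ij}\|_2$. Writing $\Delta := \widehat\theta - \theta^*$ and using the optimality of $\widehat\theta$ in \eqref{scorematch}, one obtains
\[
\tfrac{1}{2}\Delta^\top \widehat\Gamma \Delta \leq -(\widehat\Gamma\theta^* + \widehat K)^\top \Delta + \lambda_n\bigl(\mathcal{R}(\theta^*) - \mathcal{R}(\widehat\theta)\bigr).
\]
By the population identity $K^* = -\Gamma^*\theta^*$ implied by \eqref{popsol}, the ``noise'' vector rewrites as $w := (\widehat\Gamma - \Gamma^*)\theta^* + (\widehat K - K^*)$, and the hypothesis $\lambda_n \geq 2\mathcal{R}^*(w)$ combined with H\"older on the dual pair $(\mathcal{R},\mathcal{R}^*)$ gives $|w^\top\Delta| \leq (\lambda_n/2)\mathcal{R}(\Delta)$.

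Next, let $S := \{(i,i):i\in V\}\cup E$ denote the support of $\theta^*$, so $|S| = d+|E|$. A standard decomposability argument for the group norm gives $\mathcal{R}(\theta^*) - \mathcal{R}(\widehat\theta) \leq \mathcal{R}(\Delta_S) - \mathcal{R}(\Delta_{S^c})$. Plugging these estimates in, and using that the LHS of the basic inequality is nonnegative on the high-probability event described below, produces the cone condition $\mathcal{R}(\Delta_{S^c}) \leq 3\mathcal{R}(\Delta_S)$, hence $\mathcal{R}(\Delta)\leq 4\mathcal{R}(\Delta_S) \leq 4\sqrt{d+|E|}\,\|\Delta\|_2$ by Cauchy--Schwarz over the $|S|$ active groups.

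The main technical step, and the only one that is not mere bookkeeping, is the restricted strong convexity lower bound for $\widehat\Gamma$. Using the block-diagonal structure $\tilde A(x) = \mathrm{diag}(a_i(x)a_i(x)^\top)$ from Section~\ref{smef}, the quadratic form decomposes into $d$ sample-covariance quadratic forms in the sub-Gaussian vectors $a_i(X^r)\in\mathbb{R}^{md}$, so it suffices to control $\Lambda_{\min}(\widehat\Gamma_i)$ uniformly in $i$. By a standard non-asymptotic sample-covariance concentration inequality for sub-Gaussian vectors, provided $n \geq C m d$ each $\widehat\Gamma_i$ satisfies $\Lambda_{\min}(\widehat\Gamma_i)\geq \underline\epsilon/2$ with probability at least $1-2\exp(-(\bar\epsilon^2/4\underline\epsilon^2)md)$, and a union bound over $i\in V$ yields the stated overall probability $1 - 2d\exp(-(\bar\epsilon^2/4\underline\epsilon^2)md)$. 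On this event $\Delta^\top\widehat\Gamma\Delta \geq (\underline\epsilon/2)\|\Delta\|_2^2$.

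Combining everything, the basic inequality reduces to
\[
\tfrac{\underline\epsilon}{4}\,\|\Delta\|_2^2 \;\leq\; \tfrac{3\lambda_n}{2}\,\mathcal{R}(\Delta_S) \;\leq\; \tfrac{3\lambda_n}{2}\,\sqrt{d+|E|}\,\|\Delta\|_2,
\]
from which $\|\Delta\|_2 \lesssim (\lambda_n/\underline\epsilon)\sqrt{d+|E|}$ with the stated factor of $7$ recovered by careful constant bookkeeping (the slightly loose RSC constant accounts for the gap between $6$ and $7$). The principal obstacle is the restricted eigenvalue step: this is where the sub-Gaussianity of $a_i(X)$ and the sample-size condition $n = \Omega(md)$ both enter, and it is the only genuinely probabilistic ingredient in an otherwise deterministic convex analysis.
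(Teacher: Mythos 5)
Your proposal is correct and follows essentially the same route as the paper: the Negahban--Wainwright decomposable-regularizer machinery (dual-norm bound on the noise term, decomposability, subspace compatibility constant $\sqrt{d+|E|}$) combined with a restricted-eigenvalue bound for each block $\widehat{\Gamma}_i$ from sub-Gaussian sample-covariance concentration and a union bound over $i\in V$, yielding $\frac{\underline{\epsilon}}{4}\Vert\Delta\Vert_2^2 \leq \frac{3\lambda_n}{2}\sqrt{d+|E|}\,\Vert\Delta\Vert_2$. The only cosmetic difference is that you pass through an explicit cone condition and read the bound off directly, whereas the paper lower-bounds the objective difference $\mathcal{E}(\delta)$ on the boundary of the ball of radius $\frac{7\lambda_n}{\underline{\epsilon}}\sqrt{d+|E|}$ and invokes convexity, which is why its constant is $7$ rather than your (slightly sharper) $6$.
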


\begin{proof}
Define the function

\begin{align}
	\mathcal{E}(\delta) &= \mathcal{L}(\theta^*+\delta)-\mathcal{L}(\theta^*)+\lambda_n(\mathcal{R}(\theta^*+\delta)-\mathcal{R}(\theta^*))  \\
	&=\frac{1}{2}(\theta^*+\delta)^\top\widehat{\Gamma}(\theta^*+\delta)+(\theta^*+\delta)^\top\widehat{K}-\frac{1}{2}(\theta^*)^\top\widehat{\Gamma}(\theta^*)^\top - (\theta^*)^\top\widehat{K} \\
	&\qquad+ \lambda_n(\mathcal{R}(\theta^*+\delta)-\mathcal{R}(\theta^*))\notag \\
	&=\frac{1}{2}\delta^\top\widehat{\Gamma}\delta +  \delta^\top(\widehat{\Gamma}\theta^*+\widehat{K})  + \lambda_n(\mathcal{R}(\theta^*+\delta)-\mathcal{R}(\theta^*)) \\
	&= \frac{1}{2}\delta^\top\widehat{\Gamma}\delta +  \delta^\top(\widehat{\Gamma}\theta^*-\Gamma\theta^*+\widehat{K}-K)  + \lambda_n(\mathcal{R}(\theta^*+\delta)-\mathcal{R}(\theta^*)).
\end{align}

Since $\mathcal{E}(0)=0$, it must be that $\mathcal{E}(\widehat{\delta})\leq 0$.

Using the sub-Gaussian assumption, we may apply \citep{vershynin2010introduction} Remark 5.51, which says for any $c\in(0,1),t\geq 1$, with probability at least $1-2\exp\{-t^2md\}$, if $n\geq C(t/c)^2 md$, then for any $i\in V$, and any vector $\delta_i$
\begin{align}
	\delta_i^\top \widehat{\Gamma}_i\delta_i \geq \delta_i^\top \Gamma_i\delta_i + \bar{\epsilon} c\Vert\delta_i\Vert^2,
\end{align}

setting $c=\frac{\underline{\epsilon}}{2\bar{\epsilon}}$, and $t=1/c$ we get that if $n\geq Cd$, with probability at least $1-2\exp\left\{\frac{\bar{\epsilon}^2}{4\underline{\epsilon}^2}d\right\}$,

\begin{align}
	\frac{\underline{\epsilon}}{2}\Vert\delta_i\Vert^2 \leq \delta_i^\top\widehat{\Gamma}_i\delta_i.
\end{align}

Applying the union bound over all $i\in V$, with probability at least $1-2d\exp\left\{\frac{\bar{\epsilon}^2}{4\underline{\epsilon}^2}d\right\}$,

\begin{align}
	\frac{\underline{\epsilon}}{2}\delta^\top\delta \leq \delta^\top\widehat{\Gamma}\delta,
\end{align}

where $\delta=(\delta_1^\top,\ldots,\delta_d^\top)^\top$.
%Now, since $\widehat{\Gamma}=\widehat{\bar{\Gamma}}\otimes I_d$, the eigenvalues of $\widehat{\Gamma}$ are identical to those of $\widehat{\bar{\Gamma}}$, replicated $d$ times. It follows that if $n\geq Cd$, with probability at least $1-2\exp\{\frac{\bar{\epsilon}^2}{4\underline{\epsilon}^2}d\}$,
%
%\begin{align}
%	\frac{\underline{\epsilon}}{2}\delta^\top\delta \leq \delta^\top\widehat{\Gamma}\delta.
%\end{align}

By (generalized) Cauchy-Schwarz,

\begin{align}
	\left|\langle\delta,(\widehat{\Gamma}-\Gamma)\theta^*+\widehat{K}-K\rangle\right| &\leq\mathcal{R}(\delta)\mathcal{R}^*((\widehat{\Gamma}-\Gamma)\theta^*+\widehat{K}-K) \\
	&\leq \frac{\lambda_n}{2}(\mathcal{R}(\delta_\mathcal{P}+\mathcal{R}(\delta_{\mathcal{P}^\perp}).\label{gencs}
\end{align}

where $\delta_A$ denotes the projection of $\delta$ onto the set $A$. From  \citep{negahban2012unified} Lemma 3, because $\mathcal{R}$ is decomposable, it  holds that

\begin{align}
	\mathcal{R}(\theta^*+\delta)-\mathcal{R}(\theta^*) \geq \mathcal{R}(\delta_{\tilde{\mathcal{P}}^\perp})-\mathcal{R}(\delta_{\tilde{\mathcal{P}}}). \label{negh}
\end{align}

Combining \eqref{gencs} and \eqref{negh},

\begin{align}
&\langle \delta,(\widehat{\Gamma}-\Gamma)\theta^*+\widehat{K}-K\rangle + \frac{\lambda_n}{2}\left(\mathcal{R}(\theta^*+\delta)-\mathcal{R}(\theta^*) \right) \\&\geq -\frac{3\lambda_n}{2}\mathcal{R}(\delta_{\tilde{\mathcal{P}}})-\frac{\lambda_n}{2}\mathcal{R}(\delta_{\tilde{\mathcal{P}}^\perp}) \geq -\frac{3\lambda_n}{2}\mathcal{R}(\delta_{\tilde{P}})\label{eq:proof1}.
\end{align} 
Using the subspace compatibility constant we have that 
\begin{align}\mathcal{R}(\delta_{\tilde{\mathcal{P}}})\leq \sqrt{d+\left| E\right|} \Vert\delta_{\tilde{\mathcal{P}}}\Vert \leq \sqrt{d+\left| E \right|}\Vert\delta\Vert.
\end{align}
Thus conditioning on the aformentioned probability,

\begin{align}
	\mathcal{E}(\delta) &\geq \frac{\underline{\epsilon}}{4} \Vert\delta\Vert^2- \frac{3\lambda}{2}\Vert\delta\Vert\sqrt{d+E} \\
	&=\Vert\delta\Vert\left(\frac{\underline{\epsilon}}{4}\Vert\delta\Vert-\frac{3\lambda_n}{2}\sqrt{d+E}\right). \label{ineq}
\end{align}

Now, consider the set 

\begin{align}
\mathcal{C} = \left\{\delta: \Vert\delta\Vert\leq\frac{7\lambda_n}{\underline{\epsilon}}\sqrt{d+E}\right\}.
\end{align}

$\mathcal{C}$ is a compact, convex set. Furthermore, for all $\delta\in\partial\mathcal{C}$, from \eqref{ineq} we see that $\mathcal{E}(\delta)>0$. Also observe that $0\in\text{int}\mathcal{C}$. Since $\mathcal{E}(\widehat{\delta})\leq 0$, it must follow that $\widehat{\delta}\in\text{int}\mathcal{C}$, in other words

\begin{align}
	\Vert\widehat{\theta}-\theta^*\Vert_2 &\leq \frac{7\lambda_n}{\underline{\epsilon}} \sqrt{d+E}.
\end{align}

\end{proof}

\begin{proof}[Proof of Theorem \ref{parconsist}]
Applying a concentration bound to $\widehat{K}_{ij}^u-K_{ij}^u$ in addition to a union bound, we have that $\Vert\widehat{K}-K\Vert_{\max}>t$ with probability no more than $\exp\{2\log(md)-c_2nt^2\}$ for $t\leq\nu$, for constants $c_2,\nu>0$. Similarly,

\begin{align}
	\Vert(\widehat{\Gamma}-\Gamma)\theta^*\Vert_{\max}&\leq 2\kappa_{\theta^*,1}\Vert\widehat{\Gamma}-\Gamma\Vert_{\max},
\end{align}

and $\Vert\widehat{\Gamma}-\Gamma\Vert_{\max}>t$ with probability no more than $\exp\{2\log(md)-c_1nt^2\}$ for $t<\nu_2$ for $c_1,\nu_2>0$. Furthermore, observe that for any vector $\theta$,

\begin{align}
	\mathcal{R}^*(\theta) &\leq \sqrt{m} \Vert\theta\Vert_{\max},
\end{align}

thus setting $\lambda_n\asymp \sqrt{\frac{mk_{1,\theta}^2\log(md)}{n}}$, the conditions in Lemma \ref{parconsistlem} will be satisfied with probability approaching one.
\end{proof}

\section{Model Selection}

Our proof technique is the \emph{primal dual witness} method, used previously in analysis of model selection for graphical models \citep{modelselecttaylor2014,ravikumar2011high}. It proceeds as follows: construct a primal-dual pair $(\widehat{\theta},\widehat{Z})$ which satisfies $\text{supp}(\widehat{\theta})=\text{supp}(\theta^*)$, and also satisfies the stationary conditions for \ref{scorematch} with high probability. The stationary conditions for \ref{scorematch} are
\begin{align}
	\widehat{\Gamma}\widehat{\theta}+\widehat{K} + \lambda_n\widehat{Z} = 0,
\end{align}

where $\widehat{Z}$ is an element of the subdifferential $\partial\mathcal{R}(\widehat{\theta}_e)$:

\begin{align}
	(\partial\mathcal{R}(\theta_e))_{ij} = \begin{cases}
		\{\theta_{ij}: \Vert\theta_{ij}\Vert_2\leq 1\}, & \text{  if } \theta_{ij}=0; \\
		\frac{\theta_{ij}}{\Vert\theta_{ij}}\Vert_2, & \text{  if } \theta_{ij}\not=0.
	\end{cases}
\end{align}
From this we may conclude that there exists a solution to \ref{scorematch} that is sparsistent. In particular, we have the following steps:
\begin{enumerate}
	\item Set $\widehat{\theta}_{E^c} = 0$;
	\item Set $\widehat{Z}_{ij} = \partial\mathcal{R}(\theta^*_e)_{ij}=\frac{\bar{\theta}^*_{ij}}{\Vert\bar{\theta}^*_{ij}\Vert}$ for $(i,j)\in E$;
	\item Given these choices for $\widehat{\theta}_{E^c}$ and $\widehat{Z}_E$, choose $\widehat{\theta}_E$ and $\widehat{Z}_{E^c}$ to satisfy the stationary condition \ref{scorematch}.
\end{enumerate}

For our procedure to succeed, we must show this primal-dual pair $(\widehat{\theta},\widehat{Z})$ is optimal for \ref{scorematch}, in other words

\begin{align}
	&\widehat{\theta}_{ij} \not = 0 , &\text{for}\qquad (i,j)\in E;\label{pdscm1} \\
	&\Vert\widehat{Z}_{ij}\Vert <1, &\text{for}\qquad (i,j)\not\in E. \label{pdscm2}
\end{align}

In the sequel we show these two conditions hold with probability approaching one.

\begin{lem}\label{primaldual1}
	Suppose that $\Vert\widehat{\Gamma}_{EE}-\Gamma_{EE}\Vert_{\max}\leq\frac{1}{2ms\kappa_\Gamma}$. Then there exists a solution to \eqref{scorematch}, $\widehat{\theta}$, satisfying
	
\begin{align}
	\Vert\widehat{\theta}_{E}-\theta^*_{E}\Vert_\infty &\leq 2\kappa_\Gamma\left(2\kappa_{1,\theta}\Vert(\widehat{\Gamma}_{EE}-\Gamma_{EE})\Vert_{\max}+\Vert\widehat{K}-K\Vert_\infty+\lambda_n/\sqrt{m}\right).
\end{align}
\end{lem}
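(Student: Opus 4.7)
The plan is to invoke the primal-dual witness (PDW) construction. First, restrict the score matching program \eqref{scorematch} to the true support by fixing $\theta_{E^c} = 0$ and minimizing over $\theta_E$ alone. Under the hypothesis together with Assumption \ref{eigenbound}, $\widehat{\Gamma}_{EE}$ is invertible, so the restricted program is strictly convex, yielding a unique optimum $\widehat{\theta}_E$ and dual certificate $\widehat{Z}_E \in \partial\mathcal{R}(\widehat{\theta}_E)$ satisfying the restricted stationary condition
\begin{equation*}
\widehat{\Gamma}_{EE}\widehat{\theta}_E + \widehat{K}_E + \lambda_n \widehat{Z}_E = 0.
\end{equation*}

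Second, combine this with the population first-order relation. Since $\theta^* = (\Gamma^*)^{-1}K^*$ and $\theta^*_{E^c}=0$, one has $\Gamma^*_{EE}\theta^*_E + K^*_E = 0$. Subtracting and solving for the error gives
\begin{equation*}
\widehat{\theta}_E - \theta^*_E = -\widehat{\Gamma}_{EE}^{-1}\Bigl[(\widehat{K}_E - K^*_E) + (\widehat{\Gamma}_{EE} - \Gamma^*_{EE})\theta^*_E + \lambda_n \widehat{Z}_E\Bigr].
\end{equation*}

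The key analytic step is bounding $\Vert\widehat{\Gamma}_{EE}^{-1}\Vert_\infty$ by a perturbation argument. Write $\widehat{\Gamma}_{EE}^{-1} = \bigl(I + (\Gamma^*_{EE})^{-1}(\widehat{\Gamma}_{EE}-\Gamma^*_{EE})\bigr)^{-1}(\Gamma^*_{EE})^{-1}$ and expand in Neumann series. Because $\Gamma^*$ inherits the block-diagonal structure of $\tilde{A}(x)$ and the true graph has maximum degree $s$, each row of $\widehat{\Gamma}_{EE}-\Gamma^*_{EE}$ has only $O(ms)$ nonzero entries, so $\Vert\widehat{\Gamma}_{EE}-\Gamma^*_{EE}\Vert_\infty \lesssim ms\,\Vert\widehat{\Gamma}_{EE}-\Gamma^*_{EE}\Vert_{\max}$. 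The hypothesis $\Vert\widehat{\Gamma}_{EE}-\Gamma^*_{EE}\Vert_{\max}\leq 1/(2ms\kappa_\Gamma)$ then forces $\kappa_\Gamma\,\Vert\widehat{\Gamma}_{EE}-\Gamma^*_{EE}\Vert_\infty \leq 1/2$, the Neumann series converges, and $\Vert\widehat{\Gamma}_{EE}^{-1}\Vert_\infty \leq 2\kappa_\Gamma$.

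Finally, bound the three RHS terms in elementwise $\ell_\infty$: $\Vert\widehat{K}_E - K^*_E\Vert_\infty$ appears unchanged; for $(\widehat{\Gamma}_{EE}-\Gamma^*_{EE})\theta^*_E$, exploit the block-diagonal form of $\widehat{\Gamma}$ combined with the symmetry $\theta_{ij}=\theta_{ji}$ (which makes each parameter appear twice in the $\tilde{\theta}$ stacking) to get, via H\"older, $\Vert(\widehat{\Gamma}_{EE}-\Gamma^*_{EE})\theta^*_E\Vert_\infty \leq 2\kappa_{1,\theta}\,\Vert\widehat{\Gamma}_{EE}-\Gamma^*_{EE}\Vert_{\max}$; for $\widehat{Z}_E$, the subgradient property $\Vert\widehat{Z}_{ij}\Vert_2 \leq 1$ together with the $\ell_2/\ell_\infty$ equivalence on $m$-dimensional blocks produces the $1/\sqrt{m}$ factor on $\lambda_n$. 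Multiplying through by the $2\kappa_\Gamma$ prefactor yields the claimed bound. The principal obstacle is the perturbation bound for the inverse: it requires careful bookkeeping of the row-sparsity of $\widehat{\Gamma}_{EE}-\Gamma^*_{EE}$ via the block-diagonal structure of $\tilde{A}$ and the degree bound $s$, which is precisely what dictates the $1/(ms\kappa_\Gamma)$ scaling in the hypothesis; the remaining estimates are routine H\"older-type manipulations.
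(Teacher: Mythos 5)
Your proposal follows the same primal--dual witness skeleton as the paper (restrict to the support $E$, use the population relation $\Gamma^*_{EE}\theta^*_E+K^*_E=0$, and control the three error terms $\widehat{K}_E-K^*_E$, $(\widehat{\Gamma}_{EE}-\Gamma^*_{EE})\theta^*_E$, and $\lambda_n\widehat{Z}_E$), but the key technical device differs. You solve explicitly for $\widehat{\theta}_E-\theta^*_E$ in terms of $\widehat{\Gamma}_{EE}^{-1}$ and control $\Vert\widehat{\Gamma}_{EE}^{-1}\Vert_\infty\leq 2\kappa_\Gamma$ by a Neumann-series perturbation argument, using the $O(ms)$ row-sparsity of $\widehat{\Gamma}_{EE}-\Gamma^*_{EE}$ to convert the max-norm hypothesis into an $\infty$-operator-norm bound. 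The paper instead never inverts the sample matrix: it builds the map $F(\Delta_E)=-\Gamma_{EE}^{-1}\left((\widehat{\Gamma}_{EE}-\Gamma_{EE})(\Delta_E+\theta^*_E)+\Gamma_{EE}\Delta_E+\widehat{K}_E-K_E+\lambda_n\widehat{Z}_E\right)+\Delta_E$, whose fixed points are exactly the errors of solutions, and applies Brouwer's fixed point theorem to show $F$ maps the $\tilde r$-ball into itself with $\tilde r$ equal to the claimed bound; the same hypothesis $\Vert\widehat{\Gamma}_{EE}-\Gamma_{EE}\Vert_{\max}\leq 1/(2ms\kappa_\Gamma)$ enters there to absorb the $(\widehat{\Gamma}_{EE}-\Gamma_{EE})\Delta_E$ term into $\tilde r/2$. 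The two routes are essentially equivalent in strength here (your factor $2\kappa_\Gamma=\kappa_\Gamma/(1-\tfrac12)$ matches the paper's $\tilde r/2+\tilde r/2$ split), though the Brouwer route sidesteps having to certify invertibility of $\widehat{\Gamma}_{EE}$ and is the version that also yields the existence claim ("there exists a solution") directly. One soft spot in your write-up: the $\lambda_n/\sqrt{m}$ term does not follow from $\Vert\widehat{Z}_{ij}\Vert_2\leq 1$ by "$\ell_2/\ell_\infty$ equivalence" --- that inequality only gives $\Vert\widehat{Z}_{ij}\Vert_\infty\leq 1$, since a unit $\ell_2$-ball vector in $\mathbb{R}^m$ can concentrate all its mass on one coordinate; the paper obtains the $1/\sqrt{m}$ by measuring the blocks of $F$ in $\ell_2$ and using $\Vert\widehat{Z}_E\Vert_2\leq\sqrt{d+|E|}$ against a $\sqrt{m(d+|E|)}$ normalization. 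You should either adopt that accounting or accept $\lambda_n$ in place of $\lambda_n/\sqrt{m}$ in your bound.
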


\begin{proof}
The stationary condition for $\widehat{\theta}_E$, observing that $\widehat{\theta}_{E^c}=\theta^*_{E^c}=0$, is given by

\begin{align}
	\widehat{\Gamma}_{EE}\widehat{\theta}_E+\widehat{K}_E + \lambda_n \widehat{Z}_E&=0.
\end{align}

Re-arranging and observing that $\Gamma_{EE}\theta^*_E=-K_E$, we have

\begin{align}
	\widehat{\Gamma}_{EE}\widehat{\theta}_E+\widehat{K}_E + \lambda_n \widehat{Z} &=\widehat{\Gamma}_{EE}\widehat{\theta}_E-\Gamma_{EE}\theta^*_E+\widehat{K}_E -K_E+ \lambda_n \widehat{Z}_E \\
	&=(\widehat{\Gamma}_{EE}-\Gamma_{EE})\widehat{\theta}+\Gamma_{EE}(\widehat{\theta}_E-\theta^*_E)+\widehat{K}_E-K_E+\lambda_n\widehat{Z}_E.
\end{align}

Consider the map

\begin{align}
	F(\Delta_E) = -\Gamma_{EE}^{-1}\left((\widehat{\Gamma}_{EE}-\Gamma_{EE})(\Delta_E+\theta^*_E)+\Gamma_{EE}\Delta_E+\widehat{K}_E-K_E+\lambda_n\widehat{Z}_E\right)+\Delta_E.
\end{align}

$F$ has a fixed point $F(\Delta_E)=\Delta_E$ at $\widehat{\Delta}_E=\widehat{\theta}_E-\theta^*_E$ for any solution $\widehat{\theta}$. Define $\tilde{r}:=2\kappa_\Gamma\left(2\kappa_{1,\theta}\Vert(\widehat{\Gamma}_{EE}-\Gamma_{EE})\Vert_{\max}+\Vert\widehat{K}_E-K_E\Vert_\infty+\lambda_n/\sqrt{m}\right)$. If we can show $\Vert F(\Delta)\Vert_\infty \leq\tilde{r}$ for each $\Vert\Delta\Vert_\infty\leq\tilde{r}$, from Brouwer's fixed point theorem \citep{ortega2000iterative}, it follows that some fixed point satisfies $\Vert\widehat{\Delta}\Vert_\infty\leq\tilde{r}$. For $\Vert\Delta\Vert_\infty\leq\tilde{r}$,

\begin{align}
	\Vert F_{ij}\Vert_2 &\leq \frac{\kappa_\Gamma}{\sqrt{m(d+\left| E\right|)}}\left(\Vert (\widehat{\Gamma}_{EE}-\Gamma_{EE})(\Delta_E+\theta^*_E)\Vert_2 + \Vert\widehat{K}_{E}-K_{E}\Vert_2 + \lambda_n\Vert\widehat{Z}_E\Vert_2\right) \\
	&\leq \kappa_\Gamma \left(\Vert (\widehat{\Gamma}_{ij,E}-\Gamma_{ij,E})(\Delta_E+\theta^*_E)\Vert_\infty+\Vert\widehat{K}_{E}-K_{E}\Vert_\infty + \lambda_n/\sqrt{m}\right).
\end{align}

Now,
\begin{align}
	\Vert(\widehat{\Gamma}_{EE}-\Gamma_{EE})\Delta\Vert_\infty &\leq \left\{\max_{i\in V}\sum_{j\in V, k\leq m} \left|\Delta_{ij}^k\right|\right\}\cdot  \Vert\widehat{\Gamma}_{EE}-\Gamma_{EE}\Vert_{\max} \\
	&\leq ms \Vert\Delta\Vert_\infty \Vert\widehat{\Gamma}_{EE}-\Gamma_{EE}\Vert_{\max},
\end{align}

and similarly,

\begin{align}
	\Vert(\widehat{\Gamma}_{EE}-\Gamma_{EE})\theta^*_E\Vert_\infty &\leq 2\kappa_{1,\theta}\Vert\widehat{\Gamma}_{EE}-\Gamma_{EE}\Vert_\infty.
\end{align}

Thus, if $\Vert\widehat{\Gamma}_{EE}-\Gamma_{EE}\Vert_{\max}\leq\frac{1}{2ms\kappa_\Gamma}$,
\begin{align}
	\Vert F\Vert_\infty \leq \max_{ij}\Vert F_{ij}\Vert_2 &\leq \kappa_\Gamma\left(\Vert\widehat{\Gamma}_{EE}-\Gamma_{EE}\Vert_{\max} (2\kappa_{1,\theta}+ms\tilde{r})+\Vert\widehat{K}-K\Vert_\infty+\lambda_n/\sqrt{m}\right) \\
	&\leq \frac{\tilde{r}}{2}+\frac{\tilde{r}}{2}\leq \tilde{r}.
\end{align}
\end{proof}

\begin{lem}\label{primaldual2}
Suppose that $\sqrt{m}\Vert\widehat{K}-K\Vert_\infty \leq\frac{\tau\lambda_n}{4}$,  $m^{1/2}\Vert\widehat{\Gamma}-\Gamma\Vert_{\max}(\kappa_\theta+s\sqrt{m}\lambda_n)\leq\frac{\tau\lambda_n}{4}$, and $\lambda_n/\sqrt{m}\geq 2\kappa_\Gamma(ms\kappa_\theta\Vert\widehat{\Gamma}_{EE}-\Gamma_{EE}\Vert_{\max}+\Vert\widehat{K}-K\Vert_\infty)$. Then for each $(i,j)\in E^c$,

\begin{align}
	\Vert\widehat{Z}_{ij}\Vert_2<1.
\end{align}
\end{lem}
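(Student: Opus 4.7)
My plan is the primal--dual witness argument. Having set $\widehat{\theta}_{E^c}=0$ and $\widehat{Z}_E$ equal to the subgradient of $\mathcal{R}$ at $\theta^*$, the $E$-block of the stationary condition determines $\widehat{\theta}_E$ uniquely and the $E^c$-block then determines $\widehat{Z}_{E^c}$; I must verify $\Vert\widehat{Z}_{ij}\Vert_2 < 1$ for every $(i,j)\in E^c$. Combining the two stationary equations
\begin{align}
\widehat{\Gamma}_{EE}\widehat{\theta}_E + \widehat{K}_E + \lambda_n \widehat{Z}_E = 0, \qquad
\widehat{\Gamma}_{E^cE}\widehat{\theta}_E + \widehat{K}_{E^c} + \lambda_n \widehat{Z}_{E^c} = 0
\end{align}
by eliminating $\widehat{\theta}_E$, and invoking the population normal equations $\Gamma^*_{EE}\theta^*_E+K_E=0$ and $\Gamma^*_{E^cE}\theta^*_E+K_{E^c}=0$, yields
\begin{align}
\lambda_n \widehat{Z}_{E^c} = \lambda_n T\widehat{Z}_E + R,
\end{align}
where $T := \Gamma^*_{E^cE}(\Gamma^*_{EE})^{-1}$, $\widehat{T} := \widehat{\Gamma}_{E^cE}\widehat{\Gamma}_{EE}^{-1}$, and
\begin{align}
R = -(\widehat{\Gamma}_{E^cE}-\Gamma^*_{E^cE})\theta^*_E - (\widehat{K}_{E^c}-K_{E^c}) + \widehat{T}\bigl[(\widehat{\Gamma}_{EE}-\Gamma^*_{EE})\theta^*_E + (\widehat{K}_E-K_E)\bigr] + \lambda_n(\widehat{T}-T)\widehat{Z}_E.
\end{align}

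The incoherence hypothesis controls the leading term: for $(i,j)\in E^c$, $\Vert T_{ij,E}\Vert_2 \leq (1-\tau)/\sqrt{d+|E|}$, and since each of the $d+|E|$ subgradient blocks has Euclidean norm at most one, $\Vert\widehat{Z}_E\Vert_2 \leq \sqrt{d+|E|}$, so Cauchy--Schwarz yields $\lambda_n\Vert T_{ij,E}\widehat{Z}_E\Vert_2 \leq (1-\tau)\lambda_n$. It then suffices to show $\Vert R_{ij}\Vert_2 \leq \tau\lambda_n$ by bounding each of the four summands of $R$ by $\tau\lambda_n/4$ using the three listed assumptions. The $(\widehat{K}-K)_{ij}$ piece uses $\Vert v_{ij}\Vert_2 \leq \sqrt{m}\Vert v\Vert_\infty$ with assumption one. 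The $((\widehat{\Gamma}-\Gamma^*)\theta^*)_{ij}$ piece exploits the sparsity of $\theta^*$ (only $O(sm)$ non-zero terms per product row, each of magnitude at most $\kappa_\theta$) and fits inside the $\kappa_\theta$ summand of assumption two. The $\widehat{T}[\cdots]$ piece uses $\Vert\widehat{T}_{ij,E}\Vert_2 = O(1/\sqrt{d+|E|})$ (from $\Vert T_{ij,E}\Vert_2$ plus a lower-order perturbation) together with Lemma~\ref{primaldual1} applied under assumption three, which yields $\Vert\widehat{\Delta}_E\Vert_\infty = O(\kappa_\Gamma\lambda_n/\sqrt{m})$. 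Finally the cross term $\lambda_n(\widehat{T}-T)\widehat{Z}_E$ is expanded via the resolvent identity
\begin{align}
\widehat{\Gamma}_{EE}^{-1} - (\Gamma^*_{EE})^{-1} = -\widehat{\Gamma}_{EE}^{-1}(\widehat{\Gamma}_{EE}-\Gamma^*_{EE})(\Gamma^*_{EE})^{-1},
\end{align}
and controlled by $\Vert(\Gamma^*_{EE})^{-1}\Vert_\infty \leq \kappa_\Gamma$, the $s\sqrt{m}\lambda_n$ summand of assumption two, and the $\sqrt{d+|E|}$ norm of $\widehat{Z}_E$.

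The main obstacle is this last perturbation $(\widehat{T}-T)\widehat{Z}_E$: it receives no incoherence-induced $1/\sqrt{d+|E|}$ factor, so the per-group bound on $\widehat{T}-T$ must itself decay fast enough to beat the $\sqrt{d+|E|}$ growth of $\Vert\widehat{Z}_E\Vert_2$. The resolvent expansion reduces this to $(\widehat{\Gamma}-\Gamma^*)_{EE}$ acting on vectors coming from $\widehat{Z}_E$ and from $\widehat{\Delta}_E$; careful bookkeeping through the $\kappa_\Gamma$, $\kappa_\theta$, $s$ and $m$ dependencies produces exactly the compound expression $m^{1/2}\Vert\widehat{\Gamma}-\Gamma\Vert_{\max}(\kappa_\theta + s\sqrt{m}\lambda_n)$ of assumption two. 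Summing the four $\tau\lambda_n/4$ bounds with the $(1-\tau)\lambda_n$ incoherence contribution gives $\lambda_n\Vert\widehat{Z}_{ij}\Vert_2 \leq \lambda_n$, with strict inequality on the high-probability event on which the deviations $\widehat{K}-K$ and $\widehat{\Gamma}-\Gamma$ do not saturate the stated thresholds.
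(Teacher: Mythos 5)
Your overall strategy (primal--dual witness, isolate the incoherence term, push the remainder below $\tau\lambda_n$) is the same family of argument as the paper's, but your specific decomposition creates a gap that the paper's arrangement is designed to avoid. The paper never forms the sample transfer matrix $\widehat{T}=\widehat{\Gamma}_{E^cE}\widehat{\Gamma}_{EE}^{-1}$. It instead writes $\Gamma_{ij,E}(\widehat{\theta}_E-\theta^*_E)=\Gamma_{ij,E}\Gamma_{EE}^{-1}\cdot\Gamma_{EE}(\widehat{\theta}_E-\theta^*_E)$ and substitutes the $E$-block stationary condition for $\Gamma_{EE}(\widehat{\theta}_E-\theta^*_E)$, so that the \emph{population} row $\Gamma_{ij,E}\Gamma_{EE}^{-1}$ --- the only object carrying the $(1-\tau)/\sqrt{d+|E|}$ incoherence bound --- multiplies the entire bracket $(\widehat{\Gamma}_{EE}-\Gamma_{EE})\widehat{\theta}_E+(\widehat{K}_E-K_E)+\lambda_n\widehat{Z}_E$. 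As a result $\widehat{Z}_E$, whose $\ell_2$ norm is as large as $\sqrt{d+|E|}$, appears \emph{only} behind that incoherence factor, and every sample deviation acts on the single vector $\widehat{\theta}_E$, controlled entrywise through Lemma~\ref{primaldual1} and $\kappa_{1,\theta}$; this is precisely why dimension-free hypotheses such as $\sqrt{m}\Vert\widehat{\Gamma}-\Gamma\Vert_{\max}(\kappa_\theta+s\sqrt{m}\lambda_n)\le\tau\lambda_n/4$ suffice.

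Your remainder contains the term $\lambda_n(\widehat{T}-T)\widehat{Z}_E$, and this is where the argument breaks. You correctly flag it as the main obstacle, but the resolution you sketch does not go through: bounding $\Vert(\widehat{T}-T)_{ij,E}\widehat{Z}_E\Vert_2$ by Cauchy--Schwarz against $\Vert\widehat{Z}_E\Vert_2\le\sqrt{d+|E|}$ requires $\Vert(\widehat{T}-T)_{ij,E}\Vert_2\le C\tau/\sqrt{d+|E|}$, a bound that must \emph{decay with the dimension}, whereas the lemma's hypotheses (and the resolvent identity combined with $\Vert\widehat{\Gamma}-\Gamma\Vert_{\max}$ and $\kappa_\Gamma$) deliver only dimension-free control. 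Your second hypothesis, $m^{1/2}\Vert\widehat{\Gamma}-\Gamma\Vert_{\max}(\kappa_\theta+s\sqrt{m}\lambda_n)\le\tau\lambda_n/4$, contains no $1/\sqrt{d+|E|}$ factor, so no amount of bookkeeping lets it absorb the $\sqrt{d+|E|}$ growth of $\Vert\widehat{Z}_E\Vert_2$. (A secondary issue: writing $\widehat{\theta}_E=-\widehat{\Gamma}_{EE}^{-1}(\widehat{K}_E+\lambda_n\widehat{Z}_E)$ presumes $\widehat{\Gamma}_{EE}$ is invertible, which is not among the hypotheses; Lemma~\ref{primaldual1} uses a Brouwer fixed-point argument with the population inverse $\Gamma_{EE}^{-1}$ precisely to sidestep this.) The fix is to regroup so that $\lambda_n\widehat{Z}_E$ sits inside the bracket multiplied by $T_{ij,E}=\Gamma_{ij,E}\Gamma_{EE}^{-1}$ rather than by $\widehat{T}_{ij,E}$, which is exactly the paper's computation.
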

\begin{proof}

For $(i,j)\in E^c$, the stationary conditions are

\begin{align}
	0&=\widehat{\Gamma}_{E^cE}\widehat{\theta}_E+\widehat{K}_{E^c}+\lambda_n\widehat{Z}_{E^c} \\
	&= \Gamma_{E^cE}(\widehat{\theta}_E-\theta^*_E) +(\widehat{\Gamma}_{E^cE}-\Gamma_{E^cE})\widehat{\theta}_E \\
	&\qquad+\widehat{K}_{E^c}-K_{E^c}+\lambda_n\widehat{Z}_{E^c},\notag
\end{align}

re-arranging and plugging in the stationary conditions for $\widehat{\theta}_E$, we have for $(i,j)\in E^c$,

\begin{align}
	\widehat{Z}_{ij} &= \frac{1}{\lambda_n} \bigg\{-\Gamma_{ij,E}\Gamma_{EE}^{-1}(-(\widehat{\Gamma}_{EE}-\Gamma_{EE})\widehat{\theta}_E-(\widehat{K}_E-K_E)-\lambda_n\widehat{Z}_E) \\
	&\qquad\qquad-(\widehat{\Gamma}_{ij,E}-\Gamma_{ij,E})\widehat{\theta}_E-\widehat{K}_{ij}+K_{ij}\bigg\}.\notag
\end{align}

Applying the $L_2$ norm,

\begin{align}
	\Vert\widehat{Z}_{ij}\Vert_2 &=\frac{1}{\lambda_n}\bigg\{\Vert\Gamma_{ij,E}\Gamma_{EE}^{-1}\Vert_2(\Vert(\widehat{\Gamma}_{EE}-\Gamma_{EE})\widehat{\theta}_E\Vert_2 + \Vert\widehat{K}_E-K_E\Vert_2+\lambda_n\Vert\widehat{Z}_E\Vert_2)\\
	&\qquad+\Vert\widehat{\Gamma}_{ij,E}-\Gamma_{ij,E}\widehat{\theta}_E\Vert_2+\Vert\widehat{K}_{ij}-K_{ij}\Vert_2 \bigg\}\notag\\
	&\leq\frac{1}{\lambda_n}\left\{ \sqrt{m}(2-\tau)(\Vert\widehat{K}-K\Vert_\infty+\Vert(\widehat{\Gamma}-\Gamma)\widehat{\theta}_E)\Vert_\infty\right\}+1-\tau.
\end{align}

Observe that since $\Vert\widehat{\theta}-\theta^*\Vert_{\max}\leq\tilde{r}$,

\begin{align}
	\Vert(\widehat{\Gamma}-\Gamma)\widehat{\theta}\Vert_\infty &
	\leq \Vert\widehat{\Gamma}-\Gamma\Vert_{\max}(\kappa_{1,\theta}+ms\tilde{r}) \\
	&\leq \Vert\widehat{\Gamma}-\Gamma\Vert_{\max}\left(\kappa_{1,\theta}+2s\sqrt{m}\lambda_n\right),
\end{align}

so $\Vert\widehat{Z}_{ij}\Vert_2$ is bounded by
\begin{align}
	\frac{1}{\lambda_n}\left\{(2-\tau)\sqrt{m}\Vert\widehat{K}-K\Vert_\infty+\sqrt{m}(2-\tau)\Vert\widehat{\Gamma}-\Gamma\Vert(\kappa_{1,\theta}+2s\sqrt{m}\lambda_n)\right\}+1-\tau.
\end{align}

if $\sqrt{m}\Vert\widehat{K}-K\Vert_\infty\leq\frac{\tau\lambda_n}{4}$ and $\sqrt{m}\Vert\widehat{\Gamma}-\Gamma\Vert_{\max}(2\kappa_{1,\theta}+2s\sqrt{m}\lambda_n)\leq\frac{\tau\lambda_n}{4}$, this is bounded by
\begin{align}
	(2-\tau)\left(\frac{\tau}{4}+\frac{\tau}{4}\right)+1-\tau\leq 1-\frac{\tau}{2}<1.
\end{align}
\end{proof}

\begin{proof}[Proof of Theorem \ref{modelselectthm}]
Using a concentration bound for $\widehat{K}_{ij}^u-K_{ij}^u$ and applying a union bound, we have that when $t\leq \nu_1$ for some $\nu$, $\Vert\widehat{K}-K\Vert_{\max}>t$ with probability no more than $\exp\{2\log(md)-c_2nt^2\}$ for a constant $c_2$. Similarly,
\begin{align}
	\Vert(\widehat{\Gamma}-\Gamma)\theta^*\Vert_{\max}&\leq 2\kappa_{1,\theta}\Vert\widehat{\Gamma}-\Gamma\Vert_{\max},
\end{align}

and for $t\leq \nu_2$, $\Vert\widehat{\Gamma}-\Gamma\Vert_{\max}>t$ with probability no more than $\exp\{2\log(md)-c_1nt^2\}$. Thus setting $\lambda_n= C\sqrt{\frac{m\kappa_{1,\theta}^2\log(md)}{n}}$ for sufficiently large $C$, and if $\sqrt{\frac{m^2s^2\log md}{n}}=o(1)$, the assumptions of lemma \ref{primaldual2} will be satisfied with probability approaching one. Further, assumption \eqref{pdscm1} is satisfied when $\Vert\widehat{\theta}-\theta^*\Vert_\infty\leq\frac{\rho^*}{2}$. Since $\Vert\widehat{\theta}-\theta^*\Vert_\infty = O(\lambda_n/\sqrt{m})$, we require $\frac{\lambda_n}{\rho^*\sqrt{m}}=o(1)$.

\end{proof}

\begin{lem} \label{bonnetlem}
Let $\phi_k$ be the $k$th orthonormal Legendre polynomial on $[0,1]$. then
\begin{align}
	\left| x(1-x)\frac{\partial \phi_k(x)}{\partial x}\right| &= O(k^{3/2}),\\
	\left| x(1-x)\frac{\partial^2\phi_k(x)}{\partial x^2}\right| &= O(k^{5/2}).
\end{align}

\end{lem}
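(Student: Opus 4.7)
The plan is to reduce both bounds to classical pointwise estimates on the standard Legendre polynomials $P_k$ on $[-1,1]$ via a change of variables. First I would write $\phi_k(x) = \sqrt{2k+1}\, P_k(2x-1)$ (the normalization that makes $\{\phi_k\}$ orthonormal on $[0,1]$), set $u = 2x-1$, and observe the two key identities $x(1-x) = (1-u^2)/4$, $\phi_k'(x) = 2\sqrt{2k+1}\, P_k'(u)$, $\phi_k''(x) = 4\sqrt{2k+1}\, P_k''(u)$. This turns the weighted derivatives into
\begin{align}
x(1-x)\phi_k'(x) &= \tfrac{\sqrt{2k+1}}{2}\,(1-u^2)P_k'(u), \\
x(1-x)\phi_k''(x) &= \sqrt{2k+1}\,(1-u^2)P_k''(u),
\end{align}
so everything reduces to bounding $(1-u^2)P_k'(u)$ and $(1-u^2)P_k''(u)$ uniformly in $u \in [-1,1]$.

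For the first bound I would invoke Bonnet's differential identity $(1-u^2)P_k'(u) = k\bigl(P_{k-1}(u) - u P_k(u)\bigr)$, combined with the standard sup-norm bound $\sup_{u\in[-1,1]} |P_n(u)| \leq 1$. This immediately gives $|(1-u^2)P_k'(u)| \leq 2k$, so $|x(1-x)\phi_k'(x)| \leq \sqrt{2k+1}\cdot k = O(k^{3/2})$, which is the first asserted estimate.

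For the second bound I would use Legendre's differential equation $(1-u^2)P_k''(u) - 2u P_k'(u) + k(k+1)P_k(u) = 0$ to write $(1-u^2)P_k''(u) = 2u P_k'(u) - k(k+1)P_k(u)$. Applying $|P_k(u)|\le 1$ together with the classical Markov-type sup-norm bound $\sup_{u\in[-1,1]} |P_k'(u)| \leq k(k+1)/2$ (attained at the endpoints) yields $|(1-u^2)P_k''(u)| \leq 2\cdot k(k+1)/2 + k(k+1) = 2k(k+1) = O(k^2)$. Multiplying by $\sqrt{2k+1}$ gives the $O(k^{5/2})$ conclusion.

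There is no real obstacle: both inequalities follow from well-known identities for $P_k$ together with the elementary change of variables. The only thing one must be careful about is tracking the normalization factor $\sqrt{2k+1}$ and the factor of $2$ produced by each differentiation under $u=2x-1$; doing so precisely is what produces the exponents $3/2$ and $5/2$ rather than $1$ and $2$.
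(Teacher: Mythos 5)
Your argument is correct and follows essentially the same route as the paper: the first bound comes from Bonnet's identity for $(1-u^2)P_k'$ together with $\sup|P_k|\leq 1$, and the second from Legendre's differential equation together with the endpoint bound $|P_k'|\leq k(k+1)/2$; the paper simply writes these identities directly in the $[0,1]$-normalized form rather than changing variables to $[-1,1]$. Your explicit bookkeeping of the $\sqrt{2k+1}$ normalization and the factors of $2$ from $u=2x-1$ is exactly what the paper's version encodes implicitly.
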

\begin{proof}
	From Bonnet's recursion formula \citep{abramowitz1964handbook},
	\begin{align}
		x(1-x)\frac{d\phi_k(x)}{dx} &= \frac{k}{2}\left((2x-1)\phi_k(x)-\sqrt{\frac{2k+1}{2k-1}}\phi_{k-1}(x)\right), \label{bonnet}
	\end{align}
	so taking absolute values of each side, and using $\left|\phi_k\right| \leq \sqrt{2k+1}$,
	\begin{align}
		\left| x(1-x)\frac{d\phi_k}{dx}\right| &\leq k\left(\left|\phi_k\right|+\left|\phi_{k-1}\right|\right)\\
		&= O\left(k^{3/2}\right). \label{derivbound}
	\end{align}
	
	Now, using Legendre's differential equation \citep{abramowitz1964handbook},
	\begin{align}
		4x(1-x)\frac{d^2\phi_k}{dx^2}+2(2x-1)\frac{d\phi_k}{dx}-k(k+1)\phi_k &=0,
	\end{align}
	and using the fact that $\frac{d\phi_{k}}{dx}\leq \frac{k(k+1)\sqrt{2k+1}}{2}$, we find that
	\begin{align}
		\left|x(1-x)\frac{d^2\phi_k}{dx^2}\right| &\leq \frac{1}{2}\left|\frac{d\phi_k}{dx}\right|+\frac{1}{4}k(k+1)\left|\phi_k\right|\\
		&= O\left(k^{5/2}\right).
	\end{align}
	
\end{proof}

\begin{proof}[Proof of Theorem \ref{msnonp}]
The proof technique is essentially the same as Theorem \ref{modelselectthm} so we omit some details. The main difference is that here $K^*= - \Gamma^*\theta^*-\Gamma_{T}\theta_T$, so we must deal with one additional term in the analysis, the bias from truncation. Suppose $m_1=m_2$. We choose $\lambda_n$ so that with high probability,
\begin{align}
	\lambda_n &=\Omega\left( m_2\kappa_{1,\theta}\Vert\widehat{\Gamma}_{EE}-\Gamma_{EE}\Vert_{\max}+m_2\Vert\widehat{K}-K\Vert_{\max}+\kappa_Tm_2\Vert\theta_T\Vert_{\max}\right),\\
	\lambda_n &\rightarrow 0,
\end{align}

as well as requiring $n=\Omega(m_2^4 s^2 \log md)$.

Now, applying Lemma \ref{bonnetlem},  $\Vert A(x)\Vert_{\max} = O\left(m_2^4\right)$, so applying Hoeffding's inequality and a union bound as well as the boundedness assumption \ref{assump4}, $\Vert\widehat{\Gamma}_{EE}-\Gamma_{EE}\Vert_{\max}\leq C\sqrt{\frac{m_2^8 \log(m_2 d)}{n}}$ with probability approaching one, for sufficiently large constant $C$. Similarly, $\Vert\widehat{K}-K\Vert_{\max}\leq C'\sqrt{\frac{m_2^6\log(m_2d)}{n}}$ with probability approaching one. Furthermore, $\Vert\theta_T\Vert_{\max}=O(m_2^{-r-1/2})$. Thus, supposing $\kappa_{1,\theta}=O(m_2^2)$, we need
\begin{align}
	\lambda_n \asymp O\left(\sqrt{\frac{m_2^{12}\log(m_2d)}{n}} + m_2^{-r+1/2}\right).
\end{align}

Balancing the two terms, we choose $m_2\asymp n^{\frac{1}{2r+13}}$, so $\lambda_n\asymp \sqrt{\frac{\log(nd)}{n^{\frac{2r-1}{2r+13}}}}$. The stated sample complexity ensures that $\lambda_n\rightarrow 0$. Furthermore, $\Vert\widehat{\theta}_E-\theta^*_E\Vert_{\max}=O\left(\lambda_n/m_2\right)$, so we require $\frac{\lambda_n}{m_2\rho^*}\rightarrow 0$.

\end{proof}

\bibliographystyle{apalike}

\bibliography{quasr.bib}

\begin{thebibliography}{34}
\providecommand{\natexlab}[1]{#1}
\providecommand{\url}[1]{\texttt{#1}}
\expandafter\ifx\csname urlstyle\endcsname\relax
  \providecommand{\doi}[1]{doi: #1}\else
  \providecommand{\doi}{doi: \begingroup \urlstyle{rm}\Url}\fi

\bibitem[Abramowitz and Stegun(1964)]{abramowitz1964handbook}
Milton Abramowitz and Irene~A Stegun.
\newblock \emph{Handbook of mathematical functions: with formulas, graphs, and
  mathematical tables}.
\newblock Courier Corporation, 1964.

\bibitem[Bach et~al.(2011)Bach, Jenatton, Mairal, and
  Obozinski]{bach2011convex}
Francis Bach, Rodolphe Jenatton, Julien Mairal, and Guillaume Obozinski.
\newblock Convex optimization with sparsity-inducing norms.
\newblock \emph{Optimization for Machine Learning}, pages 19--53, 2011.

\bibitem[Boyd and Vandenberghe(2004)]{boyd2004convex}
Stephen Boyd and Lieven Vandenberghe.
\newblock \emph{Convex optimization}.
\newblock Cambridge university press, 2004.

\bibitem[Boyd et~al.(2011)Boyd, Parikh, Chu, Peleato, and
  Eckstein]{boyd2011distributed}
Stephen Boyd, Neal Parikh, Eric Chu, Borja Peleato, and Jonathan Eckstein.
\newblock Distributed optimization and statistical learning via the alternating
  direction method of multipliers.
\newblock \emph{Foundations and Trends{\textregistered} in Machine Learning},
  3\penalty0 (1):\penalty0 1--122, 2011.

\bibitem[Brown(1986)]{brown1986fundamentals}
Lawrence~D Brown.
\newblock Fundamentals of statistical exponential families with applications in
  statistical decision theory.
\newblock \emph{Lecture Notes-monograph series}, pages i--279, 1986.

\bibitem[Dawid and Lauritzen(2005)]{dawid2005geometry}
A~Philip Dawid and Steffen~L Lauritzen.
\newblock The geometry of decision theory.
\newblock In \emph{Proceedings of the Second International Symposium on
  Information Geometry and its Applications}, pages 22--28, 2005.

\bibitem[Dawid and Musio(2014)]{dawid2014theory}
Alexander~Philip Dawid and Monica Musio.
\newblock Theory and applications of proper scoring rules.
\newblock \emph{Metron}, 72\penalty0 (2):\penalty0 169--183, 2014.

\bibitem[Forbes and Lauritzen(2014)]{forbes2014linear}
Peter~GM Forbes and Steffen Lauritzen.
\newblock Linear estimating equations for exponential families with application
  to gaussian linear concentration models.
\newblock \emph{Linear Algebra and its Applications}, In Press, 2014.

\bibitem[Friedman et~al.(2007)Friedman, Hastie, H{\"o}fling, Tibshirani,
  et~al.]{friedman2007pathwise}
Jerome Friedman, Trevor Hastie, Holger H{\"o}fling, Robert Tibshirani, et~al.
\newblock Pathwise coordinate optimization.
\newblock \emph{The Annals of Applied Statistics}, 1\penalty0 (2):\penalty0
  302--332, 2007.

\bibitem[Friedman et~al.(2008)Friedman, Hastie, and
  Tibshirani]{friedman2008sparse}
Jerome Friedman, Trevor Hastie, and Robert Tibshirani.
\newblock Sparse inverse covariance estimation with the graphical lasso.
\newblock \emph{Biostatistics}, 9\penalty0 (3):\penalty0 432--441, 2008.

\bibitem[Hyv{\"a}rinen(2005)]{hyvarinen2005estimation}
Aapo Hyv{\"a}rinen.
\newblock Estimation of non-normalized statistical models by score matching.
\newblock In \emph{Journal of Machine Learning Research}, pages 695--709, 2005.

\bibitem[Hyv{\"a}rinen(2007)]{hyvarinen2007some}
Aapo Hyv{\"a}rinen.
\newblock Some extensions of score matching.
\newblock \emph{Computational statistics \& data analysis}, 51\penalty0
  (5):\penalty0 2499--2512, 2007.

\bibitem[Janofsky(2015)]{janofsky2015exponential}
Eric Janofsky.
\newblock \emph{Exponential Series Approaches for Nonparametric Graphical
  Models}.
\newblock PhD thesis, The University of Chicago, ArXiv:1506.03537, 2015.

\bibitem[Jason D.~Lee(2014)]{modelselecttaylor2014}
Jonathan E.~Taylor Jason D.~Lee, Yuekai~Sun.
\newblock \emph{On model selection consistency of regularized M-estimators}.
\newblock arXiv:1305.7477, 2014.

\bibitem[Jeon and Lin(2006)]{jeon2006effective}
Yongho Jeon and Yi~Lin.
\newblock An effective method for high-dimensional log-density anova
  estimation, with application to nonparametric graphical model building.
\newblock \emph{Statistica Sinica}, 16\penalty0 (2):\penalty0 353, 2006.

\bibitem[Kimeldorf and Wahba(1971)]{kimeldorf1971some}
George Kimeldorf and Grace Wahba.
\newblock Some results on tchebycheffian spline functions.
\newblock \emph{Journal of mathematical analysis and applications}, 33\penalty0
  (1):\penalty0 82--95, 1971.

\bibitem[Kingma and LeCun(2010)]{kingma2010regularized}
Diederik~P Kingma and Yann LeCun.
\newblock Regularized estimation of image statistics by score matching.
\newblock In \emph{Advances in neural information processing systems}, pages
  1126--1134, 2010.

\bibitem[K{\"o}ster et~al.(2009)K{\"o}ster, Lindgren, and
  Hyv{\"a}rinen]{koster2009estimating}
Urs K{\"o}ster, Jussi~T Lindgren, and Aapo Hyv{\"a}rinen.
\newblock Estimating markov random field potentials for natural images.
\newblock In \emph{Independent Component Analysis and Signal Separation}, pages
  515--522. Springer, 2009.

\bibitem[Liu et~al.(2012)Liu, Han, Yuan, Lafferty, and Wasserman]{liu2012high}
H.~Liu, F.~Han, M.~Yuan, J.~Lafferty, and L.~Wasserman.
\newblock High dimensional semiparametric gaussian copula graphical models.
\newblock \emph{The Annals of Statistics}, 40\penalty0 (4):\penalty0
  2293--2326, 2012.

\bibitem[Meinshausen and B{\"u}hlmann(2006)]{meinshausen2006high}
Nicolai Meinshausen and Peter B{\"u}hlmann.
\newblock High-dimensional graphs and variable selection with the lasso.
\newblock \emph{The Annals of Statistics}, 34\penalty0 (3):\penalty0
  1436--1462, 2006.

\bibitem[Negahban et~al.(2012)Negahban, Ravikumar, Wainwright, and
  Yu]{negahban2012unified}
Sahand~N Negahban, Pradeep Ravikumar, Martin~J Wainwright, and Bin Yu.
\newblock A unified framework for high-dimensional analysis of m-estimators
  with decomposable regularizers.
\newblock \emph{Statistical Science}, 27\penalty0 (4):\penalty0 538--557, 2012.

\bibitem[Ortega and Rheinboldt(2000)]{ortega2000iterative}
James~M Ortega and Werner~C Rheinboldt.
\newblock \emph{Iterative solution of nonlinear equations in several
  variables}, volume~30.
\newblock Siam, 2000.

\bibitem[Parry et~al.(2012)Parry, Dawid, Lauritzen, et~al.]{parry2012proper}
Matthew Parry, A~Philip Dawid, Steffen Lauritzen, et~al.
\newblock Proper local scoring rules.
\newblock \emph{The Annals of Statistics}, 40\penalty0 (1):\penalty0 561--592,
  2012.

\bibitem[Ravikumar et~al.(2010)Ravikumar, Wainwright, Lafferty,
  et~al.]{ravikumar2010high}
Pradeep Ravikumar, Martin~J Wainwright, John~D Lafferty, et~al.
\newblock High-dimensional ising model selection using l1-regularized logistic
  regression.
\newblock \emph{The Annals of Statistics}, 38\penalty0 (3):\penalty0
  1287--1319, 2010.

\bibitem[Ravikumar et~al.(2011)Ravikumar, Wainwright, Raskutti, Yu,
  et~al.]{ravikumar2011high}
Pradeep Ravikumar, Martin~J Wainwright, Garvesh Raskutti, Bin Yu, et~al.
\newblock High-dimensional covariance estimation by minimizing l1-penalized
  log-determinant divergence.
\newblock \emph{Electronic Journal of Statistics}, 5:\penalty0 935--980, 2011.

\bibitem[Rothman et~al.(2008)Rothman, Bickel, Levina, and
  Zhu]{rothman2008sparse}
Adam~J Rothman, Peter~J Bickel, Elizaveta Levina, and Ji~Zhu.
\newblock Sparse permutation invariant covariance estimation.
\newblock \emph{Electronic Journal of Statistics}, 2:\penalty0 494--515, 2008.

\bibitem[Sriperumbudur et~al.(2013)Sriperumbudur, Fukumizu, Kumar, Gretton, and
  Hyv{\"a}rinen]{sriperumbudur2013density}
Bharath Sriperumbudur, Kenji Fukumizu, Revant Kumar, Arthur Gretton, and Aapo
  Hyv{\"a}rinen.
\newblock Density estimation in infinite dimensional exponential families.
\newblock \emph{arXiv preprint arXiv:1312.3516}, 2013.

\bibitem[Tibshirani et~al.(2013)]{tibshirani2013lasso}
Ryan~J Tibshirani et~al.
\newblock The lasso problem and uniqueness.
\newblock \emph{Electronic Journal of Statistics}, 7:\penalty0 1456--1490,
  2013.

\bibitem[Vershynin(2010)]{vershynin2010introduction}
Roman Vershynin.
\newblock Introduction to the non-asymptotic analysis of random matrices.
\newblock \emph{arXiv preprint arXiv:1011.3027}, 2010.

\bibitem[Vincent(2011)]{vincent2011connection}
Pascal Vincent.
\newblock A connection between score matching and denoising autoencoders.
\newblock \emph{Neural computation}, 23\penalty0 (7):\penalty0 1661--1674,
  2011.

\bibitem[Wainwright and Jordan(2008)]{wainwright2008graphical}
Martin~J Wainwright and Michael~I Jordan.
\newblock Graphical models, exponential families, and variational inference.
\newblock \emph{Foundations and Trends in Machine Learning}, 1\penalty0
  (1-2):\penalty0 1--305, 2008.

\bibitem[Yang et~al.(2012)Yang, Allen, Liu, and Ravikumar]{yang2012graphical}
Eunho Yang, Genevera Allen, Zhandong Liu, and Pradeep~K Ravikumar.
\newblock Graphical models via generalized linear models.
\newblock In \emph{Advances in Neural Information Processing Systems}, pages
  1358--1366, 2012.

\bibitem[Yuan(2010)]{yuan2010high}
Ming Yuan.
\newblock High dimensional inverse covariance matrix estimation via linear
  programming.
\newblock \emph{The Journal of Machine Learning Research}, 11:\penalty0
  2261--2286, 2010.

\bibitem[Yuan and Lin(2007)]{yuan2007model}
Ming Yuan and Yi~Lin.
\newblock Model selection and estimation in the gaussian graphical model.
\newblock \emph{Biometrika}, 94\penalty0 (1):\penalty0 19--35, 2007.

\end{thebibliography}

\end{document}